\def \leq {\leqslant}
\def \le {\leq}
\def \geq {\geqslant}
\def \ge {\geq}
\def\R{\mathbb R}
\def\N{\mathbb N}
\def\C{\mathscr{C}}
\def\D{\mathcal D}
\def \M {\mathcal{M}}
\def\g{\gamma}
\def \ds {\displaystyle}
\def \d {\mathrm{d}}
\def \Q {\mathcal{Q}}
 \def \w {\bm{w}}
\numberwithin{equation}{section}
\newcommand{\emptylabel}[1]{}
\newcommand{\abs}[1]{\left\vert#1\right\vert}
\def\:{\colon}
\def\C{\mathbb{C}}
\def\N{\mathbb{N}}
\def\P{\mathbb{P}}
\def\R{\mathbb{R}}
\def\D{\mathcal{D}}
\def\d{\,\mathrm{d}}
\def\dx{\d x}
\def\dy{\d y}
\def\M{\mathcal M} 
\def\P{\mathcal P}
\def\p{\partial}
\def\ir{\int_{\R}}
\numberwithin{equation}{section}
\newtheorem{theo}{Theorem}[section]
\newtheorem{cor}[theo]{Corollary}
\newtheorem{lem}[theo]{Lemma}
\newtheorem{prp}[theo]{Proposition}
\newtheorem{rem}[theo]{Remark}
\newtheorem{defi}[theo]{Definition}
\newcommand{\vertiii}[1]{{\left\vert\kern-0.25ex\left\vert\kern-0.25ex\left\vert #1  
    \right\vert\kern-0.25ex\right\vert\kern-0.25ex\right\vert}}                      
\newcommand{\verti}[1]{{\left\vert\kern-0.25ex\left\vert\kern-0.25ex\left\vert #1    
    \right\vert\kern-0.25ex\right\vert\kern-0.25ex\right\vert}}						 
\title[Relaxation of the 1D dissipative Boltzmann]{Relaxation in Sobolev spaces and $L^1$ spectral gap of the 1D dissipative Boltzmann equation with Maxwell interactions}
\def\theauthor{R. Alonso, V. Bagland, J. A. Ca\~{n}izo, B. Lods, S. Throm}
\author{R. Alonso}
\address{Division of Arts \& Sciences, Texas A\&M University at Qatar, Education City, Doha, Qatar.}
\email{ricardo.alonso@qatar.tamu.edu}
\author{V. Bagland}
\address{Universit\'{e} Clermont Auvergne, LMBP, UMR 6620 - CNRS,  Campus des C\'ezeaux, 3, place Vasarely, TSA 60026, CS 60026, F-63178 Aubi\`ere Cedex, France.}
\email{Veronique.Bagland@uca.fr}
\author{J. A. Ca\~{n}izo}
\address{Departamento de Matem\'{a}tica Aplicada \& IMAG, Universidad de Granada, Avenida de Fuentenueva S/N, 18071 Granada, Spain.}
\email{canizo@ugr.es}
\author{B. Lods}
\address{Universit\`{a} degli Studi di Torino \& Collegio Carlo Alberto, Department of Economics, Social Sciences, Applied Mathematics and Statistics ``ESOMAS'', Corso Unione Sovietica, 218/bis, 10134 Torino, Italy.}
\email{bertrand.lods@unito.it}
\author{S. Throm}
\address{{Ume\aa} University,  Department of Mathematics and Mathematical Statistics,  901 87 Ume\aa, Sweden }
\email{sebastian.throm@umu.se}
\date{}
\begin{document}

\begin{abstract}
We study the dynamic relaxation to equilibrium of the 1D dissipative Boltzmann equation with Maxwell interactions in classical $H^s$ Sobolev spaces.  In addition, we present a spectral shrinkage analysis and spectral gap estimates for the linearised 1D dissipative Boltzmann operator with such interactions.  Based on this study, we explore the convergence in $H^s$ and $L^{1}$ spaces for the linear and nonlinear models.  This study extends classical results found in the literature given for spaces with weak topologies.
\end{abstract}

\maketitle


\section{Introduction}
\label{sec:intro}

In this work, we revisit the exponential convergence to equilibrium for the one-dimensional inelastic Boltzmann model in self-similar variables with Maxwell interactions studied in \cite{BK,bobcerc1,MR2355628}.  We provide a new detailed analysis of the nonlinear and linear problems, including spectral gap estimates, in classical $H^s$ Sobolev and weighted $L^{1}$ spaces. The results obtained in the present contribution are tailored to be used in a companion paper \cite{unique-short} in which the uniqueness of self-similar profile for the $1D$ inelastic Boltzmann equation is proved for moderately hard potentials (see particularly Section~\ref{Sec:additional}). We refer to \cite{unique-short} for more details about the one-dimensional inelastic Boltzmann model and more generally the physical relevance of inelastic kinetic equations.

\subsection{One-dimensional Boltzmann for Maxwell molecules}
We consider the following Boltzmann equation on the real line
\begin{equation}\label{eq:IB}
 \p_t f (t,x)=  \int_\R f\left(x + \frac{y}{2}\right) f\left(x - \frac{y}{2}\right) \dy
  - f(x) \int_\R f(y) \dy=\Q_{0}(f,f)
\end{equation} 
which models particles performing inelastic collisions in one dimension. We refer the reader to  \cite{BK,bobcerc1,MR2355628} for a thorough description of this model and point out that we restrict, for simplicity, to the case of \emph{sticky particles}. More precisely, this means that two particles interacting with pre-collisional velocities $x,y$ would end up with post-collisional velocities $x',y'$ given by 
\begin{equation*}
x'=y'=\frac{x+y}{2}\,.
\end{equation*}
In Eq. \eqref{eq:IB}, the (symmetrised) collision operator $\Q_{0}$ is given by
\begin{equation*}\begin{split}
  \Q_{0}(f,g)(x)
  &= \int_\R f\left(x +\frac{y}{2}\right) g\left(x - \frac{y}{2}\right) \dy
  - \frac12 f(x) \int_\R g(y) \dy
  - \frac12 g(x) \int_\R f(y) \dy
  \\
  &=: \Q_{0}^+(f,g) - \Q_{0}^-(f,g).
\end{split}\end{equation*}
In weak form the collision operator reads
\begin{equation}
  \label{eq:weak}
  \int_\R \Q_{0}(f,g)(x) \phi(x) \dx
  =
  \int_\R \int_\R f(x) g(y) \left(
    \phi\Big(\frac{x+y}{2}\Big) - \frac12\phi(x) - \frac12\phi(y)
  \right) \dx \dy
\end{equation}
for any smooth enough test function $\phi=\phi(x)$.
From \eqref{eq:weak} it follows that \eqref{eq:IB} at least formally conserves the \emph{mass} $\ds\int_\R f(t,x)\dx$ and \emph{momentum} $\ds\int_{\R}xf(t,x)\dx$ while the kinetic \emph{energy} $E(t)=\ds\int_{\R}x^2f(t,x)\dx$ is decreasing:
\begin{equation}\label{eq:Ener}
\dfrac{\d}{\d t}E(t)=-\frac{1}{4}\int_{\R^{2}}f(t,x)f(t,y)|x-y|^{2}\d x\d y\,.\end{equation} This suggests to consider the self-similar change of variables 
\begin{equation*}
 g(t,x)=\sqrt{E(t)}f(t,\sqrt{E(t)}x)
\end{equation*}
which fixes the energy of $g$ to one.  {Equation \eqref{eq:IB} is complemented by an initial condition $f(0,x)=f_{0}(x)$ for which we can assume, exploiting scale invariances, without loss of generality that
\begin{equation}
  \label{eq:normalisation-f0}
  \int_{\R} f_0(x) \dx = 1,
  \qquad
  \int_{\R} xf_0(x) \dx = 0.
\end{equation}
This yields conservation of mass and  momentum again at least formally, i.e.\@
\begin{equation*}
  \int_{\R} f(t,x) \dx = 1,
  \qquad
  \int_{\R} xf(t,x) \dx = 0, \qquad \text{for all } t\geq 0.
  \end{equation*}
It then follows that \eqref{eq:Ener} reads $\frac{\d}{\d t}E(t)=-\frac12 E(t)$ and $g$ satisfies
\begin{equation}
  \label{eq:IB-selfsim}
  \p_t g = -\frac14 \p_x (xg) + \Q_{0}(g,g)
\end{equation}
to which we refer as the \emph{self-similar equation} for Maxwell molecules. We complement \eqref{eq:IB-selfsim} with the initial condition $g(0,x)=g_{0}(x):=\sqrt{E_0} f_{0}(\sqrt{E_0}x)$ with $E_0=\int_{\R} x^2 f_{0}(x) \d x$. We deduce from \eqref{eq:normalisation-f0} and the definition of $g_{0}$  that
\begin{equation}
 \label{eq:normalisation-g0}
  \int_{\R} g_0(x) \dx = 1,
  \qquad
  \int_{\R} xg_0(x) \dx = 0,
  \qquad
  \int_{\R} x^2 g_0(x) \dx = 1.
\end{equation}
Now,  mass, momentum and energy are conserved at least formally by equation \eqref{eq:IB-selfsim}, i.e.\@
\begin{equation}
  \label{eq:normalisation-g}
  \int_{\R} g(t,x) \dx = 1,
  \qquad
  \int_{\R} xg(t,x) \dx = 0,
  \qquad
  \int_{\R} x^2g(t,x) \dx = 1 \qquad \text{for all } t\geq 0
\end{equation}}
We will use the following concept of weak (measure) solutions for \eqref{eq:IB-selfsim}:
\begin{defi}\label{Def:weak:sol}
 Let $\M_{2}(\R)$ denote the set of real Borel measures on $\R$ with finite moments up to order $2$ (see \eqref{eq:Mk}). A family of non-negative measures $\mu \colon [0,\infty) \to \M_{2}(\R)$ is denoted a weak solution to \eqref{eq:IB-selfsim} if 
\begin{multline*}\label{eq:weak:solution}
\frac{\d}{\d t} \int_{\R} \phi(x)\mu(t,\!\d x)=\frac{1}{4}\int_{\R}x\phi'(x)\mu(t,\!\d x)\\
+\int_{\R\times\R}\left(\phi\left(\frac{x+y}{2}\right)-\frac{1}{2}\phi(x)-\frac{1}{2}\phi(y)\right)\mu(t,\!\dx)\mu(t,\!\dy) \qquad \forall \phi \in \mathcal{C}_{b}^{1}(\R).
\end{multline*}
If the left-hand side is zero, i.e. if $\mu$ does not depend on $t$, $\mu$ is denoted a stationary or steady solution.
\end{defi}

\medskip

Our main goal of this work consists in studying the approach to a self-similar profile for \eqref{eq:IB} which equivalently corresponds to convergence to stationary states of \eqref{eq:IB-selfsim}. An important property of the Maxwell molecules case is that, due to explicit computations in Fourier variables, these steady solutions can actually be given explicitly. In fact one has the following statement.

\begin{theo}[\cite{bobcerc1}]\label{theo:bob} Any stationary weak solution $\mu \in \M_{2}(\R)$ of \eqref{eq:IB-selfsim}, with $\M_{2}(\R)$ given in \eqref{eq:Mk}, such that 
$$\int_{\R}\mu(\!\dx)=1,\qquad \int_{\R}x\mu(\!\dx)=0, \qquad \int_{\R}x^{2}\mu(\!\d x)=\frac{1}{\lambda^{2}} >0$$
is of the form
$$\mu(\!\dx)=H_{\lambda}(x)\d x=\lambda\bm{H}(\lambda x)\d x$$
with
\begin{equation*}
\bm{H}(x) = \frac{2}{\pi (1+x^2)^2}\,\qquad x \in \R.
\end{equation*}
\end{theo}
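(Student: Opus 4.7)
The proof follows the Fourier-transform approach of Bobylev-Cercignani. Set $\varphi(\xi) := \int_{\R}e^{-i\xi x}\mu(\d x)$. Since $\mu\in\M_{2}(\R)$, the function $\varphi$ is of class $C^{2}$ on $\R$, and the prescribed moments translate into $\varphi(0)=1$, $\varphi'(0)=0$, and $\varphi''(0)=-1/\lambda^{2}$. Plugging the complex test function $\phi(x)=e^{-i\xi x}\in\mathcal{C}_{b}^{1}(\R)$ (after splitting into real and imaginary parts) into the weak stationary identity of Definition~\ref{Def:weak:sol}, the transport term reduces to $\tfrac{\xi}{4}\varphi'(\xi)$ while the bilinear collision term becomes $\varphi(\xi/2)^{2}-\varphi(\xi)$, so that $\varphi$ solves the functional differential equation
\begin{equation*}
\frac{\xi}{4}\varphi'(\xi) \; = \; \varphi(\xi)-\varphi(\xi/2)^{2},\qquad \xi\in\R.
\end{equation*}

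A direct check shows that $\Phi_{\lambda}(\xi):=(1+|\xi|/\lambda)\,e^{-|\xi|/\lambda}$ satisfies this ODE together with the prescribed boundary data, and the classical Fourier pair $\widehat{\bm{H}}(\xi)=(1+|\xi|)e^{-|\xi|}$ confirms that $\Phi_{\lambda}$ is precisely the characteristic function of $H_{\lambda}(x)=\lambda\bm{H}(\lambda x)$. The remainder of the proof is the uniqueness statement.

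By the rescaling $\mu(\d x)\mapsto\lambda\mu(\lambda\,\d x)$ we may reduce to $\lambda=1$. Viewing the ODE for $\xi>0$ as a linear first-order equation in $\varphi$ with source $\varphi(\cdot/2)^{2}$, the integrating factor $\xi^{-4}$ gives $(\xi^{-4}\varphi(\xi))'=-4\xi^{-5}\varphi(\xi/2)^{2}$. Integrating from $\xi$ to $+\infty$ and using $|\xi^{-4}\varphi(\xi)|\to 0$ at infinity (since $|\varphi|\leq 1$), we obtain the fixed-point identity
\begin{equation*}
\varphi(\xi) \; = \; 4\xi^{4}\int_{\xi}^{\infty}\frac{\varphi(s/2)^{2}}{s^{5}}\,\d s, \qquad \xi>0.
\end{equation*}
If $\varphi_{1},\varphi_{2}$ are two solutions with identical boundary data, their difference $r:=\varphi_{1}-\varphi_{2}$ satisfies the linearised identity and vanishes to second order at the origin, $r(\xi)=o(\xi^{2})$. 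A Gr\"onwall estimate in the Toscani-type seminorm $\|r\|:=\sup_{\xi>0}|r(\xi)|/\xi^{2}$, combining $|\varphi_{i}|\leq 1$ with the dyadic self-similar structure $\xi\mapsto\xi/2$ of the integral equation and the vanishing of $r$ near $0$, forces $r\equiv 0$ on $\R_{+}$. The Hermitian symmetry $r(-\xi)=\overline{r(\xi)}$ extends this to all of $\R$, and Fourier inversion concludes that $\mu=H_{\lambda}(x)\,\d x$.

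The main obstacle is the final contraction step. The pointwise bound $|\varphi_{1}+\varphi_{2}|\leq 2$ is saturated at $\xi=0$ and yields only the marginal estimate $\|r\|\leq\|r\|$, so the contraction must be extracted by iterating the integral equation across dyadic scales and using the $o(\xi^{2})$ vanishing of $r$ at the origin to strictly gain at each iteration. The non-smoothness of $\Phi_{\lambda}$ at zero (its expansion contains a $|\xi|^{3}/3$ term, so $\Phi_{\lambda}\in\mathcal{C}^{2,1}\setminus\mathcal{C}^{3}$) rules out any naive Taylor-matching argument and is the reason one has to argue on the integral form rather than on series coefficients.
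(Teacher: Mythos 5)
First, note that the paper does not prove this theorem at all: it is quoted from \cite{bobcerc1} (extended to measures in \cite{MR2355628}), so your proposal can only be judged on its own merits. Your reduction is the standard and correct one: passing to $\varphi=\widehat{\mu}$, the weak stationarity with $\phi(x)=e^{-i\xi x}$ gives $\tfrac{\xi}{4}\varphi'(\xi)=\varphi(\xi)-\varphi(\xi/2)^2$, the normalisation gives $\varphi(0)=1$, $\varphi'(0)=0$, $\varphi''(0)=-1/\lambda^2$, the scaling reduction to $\lambda=1$ is legitimate, and the integrating-factor identity $\varphi(\xi)=4\xi^4\int_\xi^\infty s^{-5}\varphi(s/2)^2\,\d s$ is correct (using $|\varphi|\le 1$, i.e.\ nonnegativity of $\mu$). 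The problem is the step you yourself flag as ``the main obstacle'': it is not carried out, and the mechanism you describe for closing it does not work as stated. Writing $r=\varphi_1-\varphi_2$, $S=\varphi_1+\varphi_2$ and $m(\xi)=|r(\xi)|/\xi^2$, the inequality $|S|\le 2$ gives
\begin{equation*}
m(\xi)\;\le\;2\xi^{2}\int_\xi^{\infty}s^{-3}\,m(s/2)\,\d s\,,
\end{equation*}
and constant functions are exact fixed points of the right-hand side: plugging $m\equiv M$ back in returns exactly $M$, and one checks that a second iteration again returns $M\xi^2$ for $|r|$. So iterating ``across dyadic scales'' produces no strict gain whatsoever; the estimate is saturated at every step. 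Moreover, the $o(\xi^2)$ vanishing at the origin cannot be the source of a gain: the integral at scale $\xi$ involves $r$ only on $[\xi/2,\infty)$, so for $\xi\geq 2\delta$ the bound sees nothing of the smallness of $m$ on $(0,\delta]$ — the equation propagates information from large frequencies inward, not from the origin outward. As written, the central uniqueness claim is therefore a genuine gap, not a routine verification.

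The good news is that your framework can be closed, but by a different mechanism than the one you invoke. One option is an equality-case (maximum-principle) argument: $m$ is continuous on $(0,\infty)$, tends to $0$ as $\xi\to 0$ (this is where the $o(\xi^2)$ vanishing is actually used) and as $\xi\to\infty$ (since $|r|\le 2$), so if $M:=\sup m>0$ it is attained at some $\xi^*$; running the saturated estimate at $\xi^*$ forces equality in the integrand a.e.\ on $(\xi^*,\infty)$, hence $m\equiv M$ on $[\xi^*/2,\infty)$, contradicting $m(\xi)\le 2/\xi^2\to 0$; thus $r\equiv 0$ on $(0,\infty)$ and, by Hermitian symmetry and Fourier inversion, $\mu_1=\mu_2$. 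The strict gain thus comes from the decay of $|r(\xi)|/\xi^2$ at \emph{infinity}, not from the origin. An alternative route, closer to the spirit of this paper, is to first show that a stationary measure solution in $\M_2(\R)$ automatically has finite moments of some order $k\in(2,3)$ (a Povzner-type moment gain for the steady equation, cf.\ \cite{ABCL,MR2355628}); then $\vertiii{\widehat{\mu}_1-\widehat{\mu}_2}_{k}<\infty$ and, viewing the two steady states as constant-in-time solutions, the strict contraction of Theorem~\ref{k-norm-cvgce} with rate $\sigma_k>0$ forces $\vertiii{\widehat{\mu}_1-\widehat{\mu}_2}_{k}=0$. Either of these would complete your argument; without one of them, the proof is not done. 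Your final remark about the $|\xi|^3/3$ term obstructing Taylor-matching is correct but tangential to this issue.
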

In particular, $\bm{H}$ is the unique steady solution to \eqref{eq:IB-selfsim} with unit mass and energy and zero momentum. The existence and uniqueness has been obtained in \cite{bobcerc1} relying on Fourier methods which has been extended to measure solutions in \cite{MR2355628}. 

\subsection{Notations}
\label{sec:notation}
Before stating our main results, let us collect some notation used throughout this work. For the weight function
\begin{equation*}
\w_{a}(x)=\left(1+|x|\right)^{a}, \qquad a\in \R, \qquad x \in \R
\end{equation*}
we denote the corresponding weighted Lebesgue space $L^{1}(\w_{a})$ by
$$L^{1}(\w_{a}) :=\Big\{f : \R \to \R\;;\,\|f\|_{L^{1}(\w_{a})}:=\int_{\R}\big|f(x)\big|^{}\,\w_{a}(x)\,\d x  < \infty\Big\}\,.$$
For $a=0$, we simply denote $L^{1}(\w_{0})=L^{1}(\R)$ and $\|\cdot\|_{L^{1}}=\|\cdot\|_{L^{1}(\w_{0})}.$ More generally, for any $1 \leq p \leq \infty$, $\|\cdot\|_{L^{p}}$ will denote the standard norm on the Lebesgue space $L^{p}(\R)$.
For $f \in L^{1}(\w_{a}))$ we also denote by 
$$M_{a}(f) :=\int_{\R}f(x)\,  |x|^a \d x $$ the corresponding moment of order $a \in \R$. For general $s\ge 0$, we define the fractional \emph{homogeneous} Sobolev space $\dot{H}^s(\R)$ as the space of tempered distribution $f:\R \to \R$ with Fourier transform $\widehat{f} \in L^{1}_{\mathrm{loc}}(\R)$ and such that
$$\|f\|_{\dot{H}^{s}}=\left(\int_{\R} |\xi|^{2s}|\widehat{f}(\xi)|^{2}\d \xi\right)^{\frac{1}{2}} <\infty.$$
Here the Fourier transform of $f$ is defined as
$$ \widehat{f}(\xi) =\int_{\R} f(x) e^{-ix\xi}\d x, \qquad \xi \in \R. $$
In the same way, we define the Sobolev space $H^{s}(\R)$ as
$$H^s(\R):=\Big\{f \in L^{2}(\R)\;;\; \|f\|_{H^{s}}:=\left(\int_{\R} (1+|\xi|^2)^s |\widehat{f}(\xi)|^2\d \xi\right)^{\frac{1}{2}}<\infty\Big\}\,.$$
Due to the conservation of mass and momentum by \eqref{eq:IB} and additionally energy by \eqref{eq:IB-selfsim}, it is natural to introduce the following subspaces of $L^{1}(\w_{a}):$
\begin{multline}\label{eq:spaces}
{\mathbb{Y}}_{a}=\left\{f \in L^{1}(\w_{a})\;;\;\int_{\R}f(x)\d x=\int_{\R}f(x) x\d x=0\right\}\\
\text{ and }\qquad {\mathbb{Y}}_{a}^{0}=\left\{f \in {\mathbb{Y}_{a}}\;;\;\int_{\R}f(x) x^{2}\d x=0\right\}
\end{multline}
for $2 < a <3$ equipped with the norm $\|\cdot\|_{L^{1}(\w_{a})}$. Similarly, we define a suitable space of measures reflecting the conserved quantities. For $k\geq 2$, let ${\mathcal M}_{k}(\R)$ be the set of real Borel measures on $\R$ with moments up to order $k$, i.e.
\begin{equation}\label{eq:Mk}
 {\mathcal M}_{k}(\R)=\Bigl\{\text{real Borel measures $\mu$ on } \R \,;\, \int_{\R}  \w_{k}(x) \,|\mu|(\!\dx)<\infty\Bigr\}.
\end{equation}
We then denote
{\begin{equation}\label{eq:X0:measure}
X_{k}:=  \left\{ \mu\in {\mathcal M}_{k}(\R) \;\Bigg| \Bigg. \begin{array}{l}
  \displaystyle \int_{\R} \mu(\!\d x) = \int_{\R} x\, \mu(\!\d x) = \int_{\R} x^2 \,\mu(\!\d x) = 0
\end{array}\right\}. 
\end{equation}
We can equip $X_k$ with various norms based on Fourier variables. More precisely, for $k\geq 0$, the space of continuous functions $\psi \: \R \to \C$ such that $\xi \mapsto \psi(\xi) \,|\xi|^{-k}$ converges to a limit as $\xi \to 0$ equipped with the norm
\begin{equation}\label{eq:fourier:norm:1}
 \vertiii{\psi}_{k} := \sup_{\xi \in \R \setminus \{0\}} \frac{|\psi(\xi)|}{|\xi|^k}
\end{equation}
is a Banach space. The same construction can be extended to $L^p$ norms in Fourier variables
\begin{equation}\label{def:normkp}
 \vertiii{\psi}_{k,p}^p := \ir \frac{|\psi(\xi)|^p}{|\xi|^{kp}} \d \xi,
\end{equation}
for $k\geq 0$ and $p\in(1,\infty)$. The integral is finite provided $|\psi(\xi)| \leq \min\{1 ,C|\xi|^3\}$ for some $C>0$  and
$\frac{1}{p} < k < 3 + \frac{1}{p}$. For $\mu\in X_k$ it follows from \cite[Proposition 2.6]{MR2355628} that $\vertiii{\widehat{\mu}}_{k}<\infty$ for any $0< k< 3$. In particular, $X_{k}$ is a Banach space when equipped with the norm $\vertiii{\cdot}_{k}$ for $2< k<3,$ (see Proposition 2.7 in \cite{MR2355628}). We refer to Lemma \ref{lem:mukvertk} for more properties of the norms $\vertiii{\cdot}_{k},\vertiii{\cdot}_{k,p}$.

\medskip

\subsection{Main results}

In following we describe the main results we obtain in this work which can be divided in two categories: first, we consider the non-linear problem \eqref{eq:IB-selfsim} and derive explicit convergence rates in the Sobolev spaces $H^s(\R)$. Furthermore, we study the corresponding linearised equation and obtain suitable estimates on the spectral gap for the respective linear collision operator in self-similar variables.

\subsubsection{Convergence to equilibrium for the non-linear problem}

Our first main result concerns the convergence to equilibrium for \eqref{eq:IB-selfsim} with respect to Fourier based metrics. In particular this extends previous results of \cite{MR2355628} relying on the fact that \eqref{eq:IB-selfsim} in Fourier variables simplifies to a (non-local) ODE (see \eqref{eq:selfsim-fourier}).

\medskip

 \begin{theo}\phantomsection\label{k-norm-cvgce}
  Assume that $g = g(t,x)$ is a non-negative solution to
  \eqref{eq:IB-selfsim} with the normalisation
  \eqref{eq:normalisation-g}. Then, for $0 \leq k < 3$, and
  for all $t \geq 0$,
  \begin{equation*}
   \vertiii{\widehat{g}(t,\cdot)  - \widehat{\bm{H}}}_{k} \leq \exp\left(-\sigma_{k} t\right) \vertiii{\widehat{g}_0 - \widehat{\bm{H}}}_{k}\,, \qquad  \qquad \sigma_{k} := 1 - \frac14 k - 2^{1-k}\,.
  \end{equation*}
In particular, $g(t)$ converges exponentially to $\bm{H}$ in the $k$-Fourier norm for any $2 < k < 3$. More generally, for  $p\ge1$,  $\frac{1}{p} < k < 3+\frac1p$, and
  for all $t \geq 0$,
  \begin{equation*}
    \vertiii{\widehat{g}(t,\cdot)- \widehat{\bm{H}}}_{k,p} \leq \exp\left(-\sigma_{k}(p) t\right) \vertiii{\widehat{g}_0 - \widehat{\bm{H}}}_{k,p}\,,
    \qquad \qquad \sigma_{k}(p):= 1 - \frac14 k + \frac{1}{4p} - 2^{1 + \frac{1}{p} - k}\,.
  \end{equation*}
  In particular, $g(t)$ converges exponentially to $\bm{H}$ in the $k$-Fourier
  norm for any $(k,p)$ such that $\sigma_{k}(p)>0$.
\end{theo}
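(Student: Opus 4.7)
The plan is to work entirely on the Fourier side, where equation \eqref{eq:IB-selfsim} reduces to a scalar first-order non-local PDE in $\widehat g$ which can be integrated explicitly along the characteristics of the drift; the decay rates then fall out of a direct Gronwall argument. First, testing \eqref{eq:weak} against $\phi(x)=e^{-ix\xi}$ gives $\widehat{\Q_0(g,g)}(\xi)=\widehat g(\xi/2)^{2}-\widehat g(\xi)$, and the drift term $-\frac14\p_x(xg)$ transforms into $\frac14\xi\p_\xi\widehat g$, so \eqref{eq:IB-selfsim} becomes
\begin{equation*}
 \p_t\widehat g(t,\xi)=\tfrac14\,\xi\,\p_\xi\widehat g(t,\xi)+\widehat g(t,\xi/2)^{2}-\widehat g(t,\xi).
\end{equation*}
Subtracting the same identity written for the stationary profile and factoring $\widehat g(\xi/2)^{2}-\widehat{\bm H}(\xi/2)^{2}=A(t,\xi)\,D(t,\xi/2)$ with $A(t,\xi):=\widehat g(t,\xi/2)+\widehat{\bm H}(\xi/2)$ (so $|A|\le 2$, since $g$ and $\bm H$ are probability densities), the difference $D:=\widehat g-\widehat{\bm H}$ solves
\begin{equation*}
 \p_t D(t,\xi)=\tfrac14\,\xi\,\p_\xi D(t,\xi)+A(t,\xi)\,D(t,\xi/2)-D(t,\xi).
\end{equation*}

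Next, I would integrate along the characteristics $\dot\xi=-\xi/4$ of the drift, i.e.\ $\xi(s)=\xi\,e^{(t-s)/4}$, and read off Duhamel's formula
\begin{equation*}
 D(t,\xi)=e^{-t}\,D(0,\xi e^{t/4})+\int_0^t e^{s-t}\,A(s,\xi e^{(t-s)/4})\,D\bigl(s,\tfrac12\,\xi e^{(t-s)/4}\bigr)\,\mathrm{d}s.
\end{equation*}
Dividing through by $|\xi|^{k}$ and performing the two changes of variable $\eta=\xi e^{t/4}$ and $\eta=\tfrac12\xi e^{(t-s)/4}$ in the two terms produces the respective scaling factors $e^{kt/4}$ and $2^{-k}e^{k(t-s)/4}$. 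Taking the supremum in $\xi$ and setting $N(t):=\vertiii{D(t,\cdot)}_{k}$ yields
\begin{equation*}
 N(t)\le e^{-(1-k/4)t}N(0)+2^{1-k}\int_{0}^{t}e^{-(1-k/4)(t-s)}N(s)\,\mathrm{d}s,
\end{equation*}
and Gronwall's lemma applied to $e^{(1-k/4)t}N(t)$ delivers $N(t)\le N(0)\,e^{-\sigma_{k}t}$ with $\sigma_{k}=1-k/4-2^{1-k}$.

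The $L^{p}$ statement is obtained from exactly the same Duhamel identity: I take the $L^{p}(\mathrm{d}\xi)$-norm of both sides after dividing by $|\xi|^{k}$, using Minkowski's integral inequality to commute the norm with the time integral. The same two changes of variable now produce factors $e^{(k/4-1/(4p))t}$ and $2^{1/p-k}e^{(k/4-1/(4p))(t-s)}$, and an identical Gronwall step gives $\sigma_{k}(p)=1-k/4+1/(4p)-2^{1+1/p-k}$. The condition $1/p<k<3+1/p$ in the statement is precisely what guarantees integrability both at $\xi=0$, where $D=O(|\xi|^{3})$ by the vanishing of mass, momentum and energy moments recorded in \eqref{eq:normalisation-g}, and at infinity, where $D$ is bounded.

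The only step that is not purely algebraic is to verify that $N(t)$ and its $L^{p}$ analogue are finite along the flow, so that Gronwall can actually be invoked; this is the Fourier-side moment propagation established in \cite[Prop.~2.6]{MR2355628}, which yields finiteness exactly in the range of $k$ (and $p$) covered by the statement. Once finiteness has been secured, the rest of the argument is pure bookkeeping of the interplay between the drift flow $\xi\mapsto\xi e^{-t/4}$ and the collisional halving $\xi\mapsto\xi/2$, which combine multiplicatively once the changes of variable above have been performed.
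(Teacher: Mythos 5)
Your proposal is correct and follows essentially the same route as the paper: passing to Fourier variables, writing the equation for the difference $\widehat g-\widehat{\bm H}$, applying Duhamel's formula along the drift semigroup $e^{-t}\phi(\xi e^{t/4})$, bounding the collisional factor by $2$ via $\|\widehat g\|_{L^\infty},\|\widehat{\bm H}\|_{L^\infty}\le 1$, exploiting the scaling of $\vertiii{\cdot}_{k}$ and $\vertiii{\cdot}_{k,p}$ under dilation and the halving $\xi\mapsto\xi/2$, and concluding by Gronwall; the finiteness of the norms is delegated, exactly as in the paper, to \cite[Proposition 2.6]{MR2355628}. The only differences are presentational (explicit characteristics instead of the semigroup notation $T(t)$, and an explicit appeal to Minkowski's integral inequality in the $L^p$ case), so nothing further is needed.
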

The first part of the result (convergence in the norm $\vertiii{\cdot}_{k}$) is essentially contained in \cite{MR2355628} but we adopt here a simplified approach and extend the result to the new class of Fourier metrics $\vertiii{\cdot}_{k,p}$, $1 \leq p< \infty.$

Theorem~\ref{k-norm-cvgce} can be improved to obtain convergence even with respect to Sobolev norms by relying on a detailed analysis of the propagation of regularity for \eqref{eq:IB-selfsim} in Fourier variables (see Theorem~\ref{theo-baseline}). In fact, we have the following statement.

\begin{theo}[\textit{\textbf{Sobolev norm propagation and relaxation}}]\phantomsection\label{theo:Sobolev}
Let $g(t)=g(t,x)$ be a solution to the Boltzmann problem \eqref{eq:IB-selfsim}-\eqref{eq:normalisation-g0} with initial condition $g_{0}(x)=g(0,x)$ satisfying 
$$|\widehat{g_0}(\xi) | \leq \left(1+c^{2}\,|\xi|^{2}\right)^{-\frac{\beta}{2}}, \qquad \xi \in \R$$ for some $\beta,\,c>0$ and $g_0 \in H^{\ell}(\mathbb{R})$ for $\ell\geq0$.  Then, for $\frac{5}{2} < k < {3}$ and  $0<\sigma < \frac98 - \frac k4  - 2^{\frac{3}{2} - k}$ one has
\begin{equation}\label{H-relax}
  \| g(t) - \bm{H}\|_{H^{\ell}} \leq \exp\left(-\sigma t\right)\,\Big( \| g_0 - \bm{H}\|_{H^{\ell}} +  C(\sigma,\ell,k)\,\| g_0 - \bm{H}\|_{L^{1}(\w_{k})}  \Big)\,,
\end{equation}
{ for some positive constant $C(\sigma,\ell,k) >0$ depending only on $\ell,k$ and $\sigma.$}
\end{theo}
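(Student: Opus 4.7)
The plan is to work in Fourier variables and transfer to the $H^\ell$ setting the sharp Fourier-$L^2$ decay supplied by Theorem~\ref{k-norm-cvgce}. Set $h(t, \xi) := \widehat{g}(t, \xi) - \widehat{\bm{H}}(\xi)$; by Plancherel, $\|g(t) - \bm{H}\|_{H^\ell}^2$ equals, up to a constant, $\int_{\R} (1+|\xi|^2)^\ell |h(t, \xi)|^2 \,\d\xi$. The admissible rate $\sigma < \tfrac{9}{8} - \tfrac{k}{4} - 2^{\frac{3}{2}-k}$ is exactly $\sigma_k(2)$, the decay rate produced by Theorem~\ref{k-norm-cvgce} applied with $p=2$; this coincidence is the engine of the argument. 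That theorem yields
\[ \vertiii{h(t)}_{k,2}^2 = \int_{\R} \frac{|h(t,\xi)|^2}{|\xi|^{2k}}\, \d\xi \leq e^{-2\sigma_k(2)\,t}\, \vertiii{h(0)}_{k,2}^2, \]
and, since $g_0 - \bm{H}$ has vanishing mass, momentum and energy, Lemma~\ref{lem:mukvertk} (applied to the $L^p$ variant at $p=2$) gives $\vertiii{h(0)}_{k,2} \leq C\,\|g_0 - \bm{H}\|_{L^1(\w_k)}$ for $2 < k < 3$.

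To upgrade this weighted $L^2$ decay to an $H^\ell$ bound I would split the integral at a threshold $R(t) \geq 1$ to be chosen at the end. On $\{|\xi| \leq R\}$, pulling out the weight gives
\[ \int_{|\xi| \leq R}(1+|\xi|^2)^\ell |h|^2\, \d\xi \leq C R^{2(\ell+k)} \vertiii{h(t)}_{k,2}^2 \leq C R^{2(\ell+k)} e^{-2\sigma_k(2) t} \|g_0 - \bm{H}\|_{L^1(\w_k)}^2. \]
For the complementary range $\{|\xi| > R\}$, the baseline hypothesis $|\widehat{g_0}(\xi)| \leq (1+c^2|\xi|^2)^{-\beta/2}$ is propagated by Theorem~\ref{theo-baseline}, yielding a uniform-in-$t$ polynomial bound $|\widehat{g}(t,\xi)| \leq C (1+|\xi|^2)^{-\beta'/2}$; combined with the Schwartz-type decay of $\widehat{\bm{H}}$, this gives $|h(t,\xi)|^2 \leq C(1+|\xi|^2)^{-\beta'}$ and hence
\[ \int_{|\xi| > R}(1+|\xi|^2)^\ell |h|^2\, \d\xi \leq C R^{2\ell - 2\beta' + 1} \qquad (\beta' > \ell + \tfrac{1}{2}). \]
Balancing the two pieces by taking $R(t)^{2(k+\beta') - 1} \sim e^{2\sigma_k(2) t}$ produces an effective rate $\sigma_k(2)\,\tfrac{2\beta' - 2\ell - 1}{2k + 2\beta' - 1}$, which tends to $\sigma_k(2)$ as $\beta' \to \infty$; fixing $\beta' = \beta'(\sigma, \ell, k)$ large enough, possibly after iterating the baseline bootstrap to enlarge the effective exponent, guarantees that this rate exceeds any prescribed $\sigma < \sigma_k(2)$.

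The remaining term $\|g_0 - \bm{H}\|_{H^\ell}$ on the right-hand side enters as a short-time contribution: on a bounded interval $[0, T_0(\sigma, \ell, k)]$ the split-balance argument is not yet effective, and one relies instead on a Gronwall-type estimate $\|h(t)\|_{H^\ell} \leq e^{C_0 t}\|h(0)\|_{H^\ell}$ obtained by testing the Fourier equation
\[ \p_t h = \tfrac14\,\xi\,\p_\xi h - h + \bigl(2\widehat{\bm{H}}(\xi/2) + h(\xi/2)\bigr)\,h(\xi/2) \]
against $(1+|\xi|^2)^\ell \overline{h}$ and integrating by parts in the drift term. Matching the two regimes at $t = T_0$ yields the stated form. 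I expect the main obstacle to be the high-frequency step: extracting sufficiently high polynomial decay $\beta'$ of $\widehat{g}(t,\cdot)$ uniformly in $t$ from Theorem~\ref{theo-baseline}, exploiting the inward drift along the characteristics $\xi(t) = \xi_0 e^{-t/4}$ together with the damping of the $-\widehat{g}$ term. A secondary delicacy is that the sharp rate $\sigma_k(2)$, as opposed to the weaker $\sigma_k$, is reached only because the split employs the $L^2$ Fourier metric $\vertiii{\cdot}_{k,2}$ at the right step rather than the $L^\infty$ one.
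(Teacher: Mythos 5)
There is a genuine gap, and it sits exactly where you flag it: the high-frequency step. Your splitting needs a uniform-in-time pointwise bound $|\widehat{g}(t,\xi)|\leq C(1+|\xi|^2)^{-\beta'/2}$ with $\beta'$ strictly larger than $\ell+\tfrac12$ (for the tail integral to converge at all) and in fact arbitrarily large (for the balanced rate $\sigma_k(2)\,\tfrac{2\beta'-2\ell-1}{2k+2\beta'-1}$ to exceed an arbitrary $\sigma<\sigma_k(2)$). But Theorem~\ref{theo-baseline} only \emph{propagates} the decay exponent $\beta$ of the initial datum, it never improves it: in the Duhamel representation the free part $e^{-t}\varphi_0(\xi e^{t/4})$ decays, for each fixed $t$, only like $|\xi|^{-\beta}$ as $|\xi|\to\infty$, so no bootstrap of the gain term $\varphi(t,\xi/2)^2$ can push the uniform-in-time exponent beyond $\beta$; the iteration at the end of the proof of Theorem~\ref{theo-baseline} only recovers $\beta$ from a weaker intermediate exponent $\alpha'$, it does not enlarge it. Since the theorem's hypothesis allows $\beta>0$ arbitrarily small (with $g_0\in H^\ell$ giving no pointwise Fourier decay), your tail estimate is unavailable when $\beta\leq\ell+\tfrac12$, and even for large $\beta$ the frequency-splitting loses a fixed fraction of the rate, so it cannot produce every $\sigma<\tfrac98-\tfrac k4-2^{\frac32-k}=\sigma_k(2)$ as the statement requires. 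Replacing the pointwise decay by a uniform $H^{\ell'}$ bound with $\ell'\gg\ell$ runs into the same circularity: such propagation is precisely part of what is being proved.

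The paper's proof circumvents this by never splitting frequencies. It writes the equation satisfied by $\psi_m=|\xi|^m(\varphi-\bm{\Phi})$ with $m=\ell+k$, applies Duhamel with the semigroup $T_m$ (whose decay rate $\alpha_{m,2}=1+\tfrac{m-k}{4}+\tfrac18$ exceeds $\sigma_k(2)$ thanks to the extra damping produced by the weight), and controls the quadratic term through the interpolation $\vertiii{\psi_{m-\beta}}_{k,2}\leq\vertiii{\psi_m}_{k,2}^{1-\beta/m}\vertiii{\psi}_{k,2}^{\beta/m}$ combined with $\|\,|\xi|^\beta\varphi(t)\|_{L^\infty}\leq c_0^{-\beta}$ from Theorem~\ref{theo-baseline} and Young's inequality; the source term decaying like $e^{-\sigma_k(2)t}$ comes from Theorem~\ref{k-norm-cvgce} with $p=2$ and Lemma~\ref{lem:mukvertk}, exactly as you use them. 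The point is that the Young step trades the smallness of $\beta$ for a large constant $C\varepsilon^{-(m/\beta-1)}$ while losing only an arbitrarily small $\varepsilon$ in the exponential rate, which is what makes the statement true for \emph{every} $\beta>0$ and every $\sigma<\sigma_k(2)$. If you want to salvage your scheme, you would have to import this $\varepsilon$-loss mechanism into the weighted norm itself rather than into a frequency cutoff; your short-time Gronwall patch does not address the issue, since the failure is at all times through the rate, not near $t=0$.
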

The propagation of  regularity, uniformly in time, for the rescaled equation \eqref{eq:IB-selfsim} has been investigated already in \cite{FPTT} and our result extends in particular \cite[Theorem 5]{FPTT} which was obtained  via a semi-implicit discretization of Eq. \eqref{eq:IB-selfsim}. We propose here a new approach which is more direct (no iteration/approximation step) and based purely on comparison arguments.  Our Theorem \ref{theo:Sobolev} is new and proves at the same time the propagation of Sobolev estimates and the convergence in Sobolev norms with explicit rate of convergence. By simple interpolation, the result also provides the rate of convergence in $L^{1}$ (see Corollary \ref{cor:L1conv}).

\subsubsection{Spectral gap for the linearised problem} The exponential convergence towards equilibrium in the various norms provided in Theorems \ref{k-norm-cvgce} or \ref{theo:Sobolev} strongly suggests the existence of a spectral gap for the associated linearized operator in the spaces considered in such results.  For that purpose, we introduce 

 \begin{defi}\label{def:L}
   We define the linearised operator 
 $\mathscr{L}:\mathscr{D}(\mathscr{L}) \subset X_k\to X_k$
 on the Banach space $X_k$ given in \eqref{eq:X0:measure} by
 \begin{equation*}
 \mathscr{L}h (x) := -\frac14 \p_x( x h) + 2\Q_{0}(h,\bm{H}), \quad h \in \mathscr{D}(\mathscr{L})
\end{equation*}
and $\mathscr{D}(\mathscr{L})=\{h \in X_{k}\;;\;\p_{x}(x h) \in X_{k}\}.$
\end{defi}

In this work, we prove new spectral gap estimates for $\mathscr{L}$ which correspond to convergence rates for the linearised equation
\begin{equation}
  \label{eq:linearised}
  \p_t h = \mathscr{L}h = -\frac14 \p_x( x h) + 2 \Q_{0}(h, \bm{H}),
\end{equation}
with initial condition $h(0,x)=h_{0}(x)$ satisfying
\begin{equation}
   \label{eq:normalisation-lin}
   \int_\R h_{0}(x) \dx =
   \int_\R x h_{0}(x) \dx  = \int_\R x^2 h_{0}(x) \dx = 0,
 \end{equation}
In fact, in analogy to Theorem~\ref{k-norm-cvgce} our first result provides a spectral gap estimate for $\mathscr{L}$ with respect to the Fourier norms $\vertiii{\cdot}_{k}$ and $\vertiii{\cdot}_{k,p}$:
\begin{theo}\label{specgap}
  Assume that $h = h(t,x)$ is a solution to
  \eqref{eq:linearised} with the normalisation
  \eqref{eq:normalisation-lin}. Then, for $0 \leq k < 3$
  \begin{equation*}
    \vertiii{\widehat{h}(t,\cdot)}_k \leq \exp\left(-\sigma_{k} t\right) \vertiii{\widehat{h}_0}_k 
    \qquad \forall t \geq0\,,\end{equation*}
    where $h_{0}=h(0,\cdot)$ and
$\sigma_{k}:= 1 - \frac14 k - 2^{1-k}.$  
 In particular, $h(t,\cdot)$ converges exponentially to $0$ in the $k$-Fourier norm for any $2 < k < 3$. Moreover, for any $1 \leq p < \infty$, 
\begin{equation*}
\vertiii{\widehat{h}(t,\cdot)}_{k,p} \leq \exp\left(-\sigma_{k}(p) t\right)\vertiii{\widehat{h}_0}_{k,p} \qquad \forall t \geq0,\end{equation*}
where $\sigma_{k}(p)=1 - \frac14 k + \frac{1}{4p} + 2^{1 + \frac{1}{p} - k}.$ 
\end{theo}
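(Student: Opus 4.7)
The plan is to pass to Fourier variables and reuse, with simplifications, the dissipative mechanism that drives Theorem~\ref{k-norm-cvgce}. Writing $\psi(t,\xi):=\widehat{h}(t,\xi)$ and using $\widehat{\bm{H}}(0)=1$ together with $\widehat{h}(t,0)=0$ (mass vanishing in \eqref{eq:normalisation-lin}), equation \eqref{eq:linearised} transforms into
\begin{equation*}
\partial_t \psi(t,\xi) = \tfrac{1}{4}\,\xi\, \partial_\xi \psi(t,\xi) + 2\,\widehat{\bm{H}}(\xi/2)\,\psi(t,\xi/2) - \psi(t,\xi).
\end{equation*}
The two remaining moment conditions in \eqref{eq:normalisation-lin} force $\partial_\xi\psi(t,0)=\partial_\xi^2\psi(t,0)=0$, so $\psi(t,\xi)=o(|\xi|^2)$ near the origin, which ensures that $\psi/|\xi|^k$ is continuous and bounded and that $|\psi|^p/|\xi|^{kp}$ is integrable in the stated ranges. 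The pivotal pointwise bound throughout is $|\widehat{\bm{H}}(\xi/2)|\leq 1$, which reflects that $\bm{H}$ is a probability density.

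For the $\vertiii{\cdot}_k$ estimate, I would set $F(t,\xi):=\psi(t,\xi)/|\xi|^k$ and use $\xi\,\partial_\xi|\xi|^{-k}=-k|\xi|^{-k}$ to rewrite the Fourier equation as
\begin{equation*}
\partial_t F = \tfrac{1}{4}\,\xi\,\partial_\xi F + \left(\tfrac{k}{4}-1\right)F + 2^{1-k}\,\widehat{\bm{H}}(\xi/2)\,F(\xi/2).
\end{equation*}
Along the characteristics $\xi(s)=\xi_0 e^{-s/4}$ (which contract the real line toward $0$ as $s$ grows, but preserve the supremum over $\xi_0$), the Duhamel representation together with $|\widehat{\bm{H}}|\leq 1$ and $|F(s,\xi(s)/2)|\leq m(s):=\vertiii{\psi(s)}_k$ yields the scalar integral inequality
\begin{equation*}
m(t)\leq e^{(k/4-1)t}\,m(0)+2^{1-k}\int_0^t e^{(k/4-1)(t-s)}m(s)\,\mathrm{d}s,
\end{equation*}
to which Grönwall's lemma applies and delivers $m(t)\leq m(0)\,e^{-\sigma_k t}$ with $\sigma_k=1-\tfrac{k}{4}-2^{1-k}$, as desired.

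For the $\vertiii{\cdot}_{k,p}$ estimate, I would multiply the Fourier equation by $p|\psi|^{p-2}\bar\psi\,|\xi|^{-kp}$, take real parts, and integrate over $\R$. An integration by parts on the transport contribution produces the term $\tfrac{kp-1}{4}\vertiii{\psi}_{k,p}^p$, with boundary contributions vanishing thanks to $|\psi|=o(|\xi|^2)$ at zero and the integrability condition $\tfrac{1}{p}<k<3+\tfrac{1}{p}$. The gain term is controlled through $|\widehat{\bm{H}}|\leq 1$, Hölder's inequality and the scaling identity $\int|\psi(\xi/2)|^p|\xi|^{-kp}\,\mathrm{d}\xi=2^{1-kp}\int|\psi(\eta)|^p|\eta|^{-kp}\,\mathrm{d}\eta$, producing a bound $p\cdot 2^{1+1/p-k}\,\vertiii{\psi}_{k,p}^p$. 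Combining with the loss $-p\,\vertiii{\psi}_{k,p}^p$ and invoking Grönwall once more closes the argument with the stated rate.

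The main technical obstacle I foresee is the rigorous justification of the integration by parts and of differentiating under the integral when $\psi$ is merely continuous and vanishes like $|\xi|^3$ at zero. This is resolved by a standard density argument: approximate $h_0$ by Schwartz data for which all manipulations are legal, derive the estimates in that setting, and pass to the limit by exploiting the linearity of \eqref{eq:linearised} and the continuity of both Fourier-based norms under such an approximation; crucially, this approximation does not affect the resulting decay rates.
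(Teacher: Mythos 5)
Your argument is correct, and for the $\vertiii{\cdot}_k$ estimate it is essentially the paper's own proof in disguise: your characteristic representation for $F=\psi/|\xi|^k$ is exactly the Duhamel formula for the semigroup $T(t)\phi(\xi)=e^{-t}\phi(\xi e^{t/4})$ used in the proofs of Theorems~\ref{k-norm-cvgce} and~\ref{specgap}, combined with the same bound $|\bm{\Phi}(\xi/2)|\le 1$ and Gronwall, and it yields the identical rate $\sigma_k$. Where you genuinely deviate is the $\vertiii{\cdot}_{k,p}$ part: the paper stays with the mild formulation, using $\vertiii{T(t)\psi}_{k,p}=e^{-\alpha_p t}\vertiii{\psi}_{k,p}$ with $\alpha_p=1-\frac k4+\frac1{4p}$ and $\vertiii{B(s)}_{k,p}\le 2^{1+\frac1p-k}\vertiii{\psi(s)}_{k,p}$, whereas you run a direct $L^p$ energy estimate on $|\psi|^p|\xi|^{-kp}$ (multiplier $p|\psi|^{p-2}\bar\psi|\xi|^{-kp}$, integration by parts on the drift, H\"older plus the scaling identity for the gain). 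The constants agree, so both routes give the same spectral gap; the Duhamel route is ``softer'' in that it requires no differentiation under the integral nor control of boundary terms, while your energy route needs the density/approximation step you describe --- a fair price, and your remark that linearity makes the limit passage harmless is the right way to close it. Two small points: the rate you obtain, $\sigma_k(p)=1-\frac k4+\frac1{4p}-2^{1+\frac1p-k}$, is the correct one (consistent with Theorem~\ref{k-norm-cvgce}); the plus sign in front of $2^{1+\frac1p-k}$ in the statement of Theorem~\ref{specgap} is a typo. Also, the assertion that $\psi(t,\xi)=o(|\xi|^2)$ near the origin already makes $\psi/|\xi|^k$ bounded is not literally sufficient when $2<k<3$; finiteness of $\vertiii{\psi(t)}_k$ in that range really rests on the finite moment of order $k$ via (the mechanism of) Lemma~\ref{lem:mukvertk}. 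The paper's proof of Theorem~\ref{k-norm-cvgce} is equally terse on this point, so it is not a gap relative to the paper's standard, but you should cite the moment bound rather than Taylor's theorem alone.
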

The existence of a spectral gap for $\mathscr{L}$ in the space $X_{k}$ endowed with the Fourier norm $\vertiii{\cdot}_{k}$ is essentially well-known (see e.g. \cite{MR2355628}) but we revisit the arguments in Section \ref{sec:sg-small}.   For practical purpose, and in particular for the stability of the spectral properties of the linearized operator associated to moderate hard potentials as considered in the companion paper \cite{unique-short}, it is important to extend the existence of an explicit spectral gap in spaces of the type $L^{1}(\w_{a})$. As a main contribution, similarly to Theorem~\ref{theo:Sobolev}, the result from Theorem \ref{specgap} can be transferred to the more tractable class of spaces ${\mathbb{Y}_{a}^{0}}$ (see \eqref{eq:spaces}) with $2<a<3$. More precisely, for $\mu\in X_k$ \cite[Lemma 2.5 and Proposition 2.6]{MR2355628} imply
$${\vertiii{\widehat{\mu}}_{k}}\leq C \int_{\R} \w_{k}(x) \,|\mu|(dx) \qquad \mbox{ for any  }\; 2< k<3,$$ 
with $\w_{k}(x)=(1+|x|)^{k}$ and thus $\mathbb{Y}_{a}^{0}\subset X_k$ for $a\geq k$. To restrict the spectral gap from the larger space $X_k$ to $\mathbb{Y}_a^{0}$ we can rely on techniques developed in \cite{GMM} for the opposite procedure, i.e. extension of a spectral gap to a larger space (see also \cite{MM} for pioneering work on the shrinkage as well as \cite{CanizoThrom}). More precisely, one exploits a suitable splitting of the operator
$$\mathscr{L}_{0}=A+B,$$ with $A: X_k \to \mathbb{Y}_{a}^{0}$ bounded
and $B$ enjoying some dissipative properties (we refer to Section \ref{sec:sg-small} for more details). This leads to the following statement.

\begin{theo}\label{restrict}
  Let $2<a<3$. The operator $(\mathscr{L},\mathscr{D}(\mathscr{L}))$ generates a strongly continuous semigroup $\left(S_{0}(t)\right)_{t\geq0}$ on $\mathbb{Y}_{a}^{0}$ and for any $ {\nu \in(0,1-\frac{a}{4} -2^{1-a})}$, there exists $C(\nu)>0$ such that
  $$ \|S_{0}(t)h\|_{L^1(\w_{a})}\le C(\nu) \exp\left(-\nu t\right) \|h\|_{L^1(\w_{a})}$$
  for any $h\in \mathbb{Y}_{a}^{0}$ and $t\ge 0$. Moreover, one has that
  $$\|\mathscr{L}h\|_{L^1(\w_a)}\ge \frac{\nu}{C(\nu)} \|h\|_{L^1(\w_a)}, \qquad \text{for any } h \in \mathscr{D}(\mathscr{L}) {\cap\mathbb{Y}_{a}^{0}}.$$
\end{theo}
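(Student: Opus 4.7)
The plan is to carry out the shrinkage procedure sketched right before the statement: we transfer the exponential decay of the semigroup generated by $\mathscr{L}$ from the larger space $X_{k}$ (equipped with the Fourier seminorm $\vertiii{\cdot}_{k}$, where Theorem~\ref{specgap} yields the rate $\sigma_{k} = 1 - \tfrac{k}{4} - 2^{1-k}$) down to $\mathbb{Y}_{a}^{0}$ equipped with the $L^{1}(\w_a)$-norm. The embedding $\mathbb{Y}_{a}^{0}\hookrightarrow X_{k}$ for $2<k\leq a$ is the one recalled right before the statement, $\vertiii{\widehat{h}}_{k}\leq C\|h\|_{L^{1}(\w_{k})}\leq C\|h\|_{L^{1}(\w_{a})}$, so any exponential decay proved for $S_{0}(t)$ in $L^1(\w_a)$ will be compatible with the one already available in $X_k$.

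For the splitting I would exploit that $\int_{\R} h\,\dx=0$ on $\mathbb{Y}_{a}^{0}$, which gives $2\Q_{0}(h,\bm{H})=2\Q_{0}^{+}(h,\bm{H})-h$, and pick a smooth cutoff $\chi_{R}$ with $\chi_{R}\equiv 1$ on $\{|x|\leq R\}$ and $0\leq\chi_{R}\leq 1$, setting
$$\mathscr{A}h := 2\Q_{0}^{+}(h,\chi_{R}\bm{H}),\qquad \mathscr{B}h := -\tfrac{1}{4}\p_{x}(xh) - h + 2\Q_{0}^{+}\bigl(h,(1-\chi_{R})\bm{H}\bigr).$$
A Kato-type inequality together with integration by parts and the identity $x\,\w_{a}'(x)=a\,\w_{a}(x)-a\,\w_{a-1}(x)$ yields
$$\int_{\R}\sgn(h)\bigl(-\tfrac{1}{4}\p_{x}(xh)-h\bigr)\w_{a}\,\dx = \bigl(\tfrac{a}{4}-1\bigr)\|h\|_{L^{1}(\w_{a})} - \tfrac{a}{4}\|h\|_{L^{1}(\w_{a-1})},$$
so the drift-plus-loss part of $\mathscr{B}$ is dissipative on $L^1(\w_a)$ at rate $1-\tfrac{a}{4}$. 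Using the convolution-like identity $\Q_{0}^{+}(h,\bm{H})(x)=2(h\ast\bm{H})(2x)$ and Young's inequality with weight $\w_{a}$, the remaining term $2\Q_{0}^{+}(h,(1-\chi_{R})\bm{H})$ contributes at most $C\|(1-\chi_{R})\bm{H}\|_{L^{1}}\|h\|_{L^{1}(\w_{a})}$, which is made arbitrarily small by enlarging $R$. Therefore $\mathscr{B}$ is dissipative on $L^{1}(\w_{a})$ with any rate $\nu_{\mathscr{B}}$ strictly less than $1-\tfrac{a}{4}$, while $\mathscr{A}$ is bounded from $X_{k}$ to $L^{1}(\w_{a})$ (in fact into a much more regular space) since $\chi_{R}\bm{H}$ is smooth and compactly supported.

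Semigroup generation then follows from Lumer-Phillips applied to $\mathscr{B}$ on $L^{1}(\w_{a})$ together with bounded-perturbation theory for $\mathscr{L}=\mathscr{A}+\mathscr{B}$; invariance of $\mathbb{Y}_{a}^{0}$ under $S_{0}(t)$ is a consequence of the preservation of the three moment conditions by $\mathscr{L}$. The decay is obtained from the Duhamel formula
$$S_{0}(t)h = S_{\mathscr{B}}(t)h + \int_{0}^{t} S_{\mathscr{B}}(t-s)\,\mathscr{A}\,S_{0}(s)h\,\d s$$
by combining the $L^{1}(\w_{a})$-decay of $S_{\mathscr{B}}$ at rate $\nu_{\mathscr{B}}$, the boundedness $\mathscr{A}\colon X_{k}\to L^{1}(\w_{a})$, and the $X_{k}$-decay of $S_{0}(s)h$ at rate $\sigma_{a}$ provided by Theorem~\ref{specgap}. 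Because $\nu_{\mathscr{B}}$ can be chosen strictly larger than $\sigma_{a}$, the usual convolution estimate produces $\|S_{0}(t)h\|_{L^{1}(\w_{a})}\leq C(\nu)\exp(-\nu t)\|h\|_{L^{1}(\w_{a})}$ for any $\nu<\sigma_{a}=1-\tfrac{a}{4}-2^{1-a}$. Finally, the lower bound $\|\mathscr{L}h\|_{L^{1}(\w_{a})}\geq \tfrac{\nu}{C(\nu)}\|h\|_{L^{1}(\w_{a})}$ is the standard consequence that this exponential decay forces $0\in\rho(\mathscr{L}|_{\mathbb{Y}_{a}^{0}})$ with $\|\mathscr{L}^{-1}\|\leq C(\nu)/\nu$ via $\mathscr{L}^{-1}=-\int_0^\infty S_{0}(t)\,\d t$. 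The main obstacle is arranging the splitting so that $\mathscr{B}$ is genuinely dissipative on $L^1(\w_a)$ with rate close enough to $1-\tfrac{a}{4}$: this requires a careful treatment of both the boundary terms in the drift integration by parts and the tail contribution of $(1-\chi_R)\bm{H}$ to the gain term, so that the loss in the Duhamel estimate is controlled purely by the known rate $\sigma_a$ from $X_k$.
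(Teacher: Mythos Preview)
Your overall plan---split $\mathscr{L}=\mathscr{A}+\mathscr{B}$, show $\mathscr{B}$ is dissipative in $L^{1}(\w_{a})$, use Duhamel together with the $X_{k}$-decay from Theorem~\ref{specgap}---is exactly the shrinkage strategy the paper follows. The gap is in the specific splitting you chose: the operator $\mathscr{A}h=2\Q_{0}^{+}(h,\chi_{R}\bm{H})=4(h\ast(\chi_{R}\bm{H}))(2\cdot)$ is \emph{not} bounded from $(X_{k},\vertiii{\cdot}_{k})$ into $L^{1}(\w_{a})$, so the convolution estimate in your Duhamel step breaks down.

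The point is that the Fourier norm $\vertiii{\widehat{h}}_{k}$ controls only the size of $\widehat{h}(\xi)/|\xi|^{k}$; it says nothing about the spatial spread of $h$, and convolving with the compactly supported kernel $\chi_{R}\bm{H}$ does not localise the output. Concretely, pick any even, smooth, compactly supported $\psi_{0}\not\equiv 0$ with $\int\psi_{0}=\int x^{2}\psi_{0}=0$ and set $h_{M}:=\psi_{0}(\cdot-M)+\psi_{0}(\cdot+M)$. Then $h_{M}\in\mathbb{Y}_{a}^{0}$ for all $M$, and since $\widehat{h_{M}}(\xi)=2\cos(M\xi)\widehat{\psi_{0}}(\xi)$ one has $\vertiii{\widehat{h_{M}}}_{k}\leq 2\vertiii{\widehat{\psi_{0}}}_{k}$ uniformly in $M$. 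But $\mathscr{A}h_{M}(x)=4\bigl[(\psi_{0}\ast(\chi_{R}\bm{H}))(2x-M)+(\psi_{0}\ast(\chi_{R}\bm{H}))(2x+M)\bigr]$ is, for large $M$, a fixed nonzero profile sitting near $|x|\simeq M/2$, so $\|\mathscr{A}h_{M}\|_{L^{1}(\w_{a})}\gtrsim M^{a}\to\infty$. Hence your claim ``$\mathscr{A}$ is bounded from $X_{k}$ to $L^{1}(\w_{a})$ since $\chi_{R}\bm{H}$ is smooth and compactly supported'' is false; smoothness of the kernel gives regularity of the output, not weighted decay.

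The paper's fix is to truncate both $h$ and the output: set $A_{1}h(x)=4\theta_{R}(x)\bigl((h\rho_{R})\ast\bm{H}\bigr)(2x)$, with $\rho_{R},\theta_{R}$ smooth cutoffs. Now $A_{1}h$ is compactly supported, so the $L^{1}(\w_{a})$-norm is controlled by an $L^{2}$-norm on a fixed ball, which one then bounds by $\vertiii{\widehat{h}}_{k}$ through a Fourier argument exploiting the rapid decay of $\widehat{\rho_{R}}$ and $\widehat{\bm{H}}$ (Proposition~\ref{prp:bounded}). A further correction $A_{2}=-\P(A_{1}\cdot)$ by a three-moment projection forces $A=A_{1}+A_{2}$ to land in $\mathbb{Y}_{a}^{0}$. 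The price of truncating $h$ is a remainder $B_{2,2}h=4((h(1-\rho_{R}))\ast\bm{H})(2\cdot)$ whose $L^{1}(\w_{a})$-bound does \emph{not} vanish as $R\to\infty$ but carries an irreducible factor $2^{1-a}$ (Lemma following Proposition~\ref{prp:dissi}); this is precisely why the dissipation rate of $B$---and hence the admissible $\nu$ in the theorem---is $1-\tfrac{a}{4}-2^{1-a}$ rather than the $1-\tfrac{a}{4}$ your splitting would suggest.
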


 \subsection{Organisation of the paper}
 
The remainder of this work is structured as follows. In Section~\ref{Sec:conv:eq:fourier} we prove the convergence to equilibrium for the non-linear problem \eqref{eq:IB-selfsim} in Fourier norms as stated in Theorem~\ref{k-norm-cvgce}. Section~\ref{sec:regularity} is devoted to the proof of Theorem~\ref{theo:Sobolev} by providing detailed estimates on the propagation of regularity in Fourier variables. In Section~\ref{Sec:linear} we will prove Theorems \ref{specgap} and \ref{restrict} by extensively studying the linear problem and the corresponding linearised operator. Moreover, we discuss various additional consequences  (see Section~\ref{Sec:additional}) which are particularly relevant in the companion article \cite{unique-short}. Finally, an auxiliary result related to the Fourier norms is provided in the Appendix~\ref{appendix}.

 \subsection*{Acknowledgments}

R.~Alonso gratefully acknowledges the support from O Conselho Nacional de Desenvolvimento Cient\'ifico e Tecnol\'ogico, Bolsa de Produtividade em Pesquisa - CNPq (303325/2019-4). J.~Ca\~{n}izo acknowledges support from grant
PID2020-117846GB-I00, the research network RED2018-102650-T, and the
Mar\'ia de Maeztu grant CEX2020-001105-M from the Spanish
government. B.~Lods was partially supported by PRIN2022 (project ID: BEMMLZ) Stochastic control and games and the role of information. He also gratefully acknowledges the financial support from the Italian Ministry of Education, University and Research (MIUR), Dipartimenti di Eccellenza grant 2022-2027
as well as the support from the de Castro Statistics Initiative, Collegio Carlo Alberto (Torino).  The authors would like to acknowledge the support of the
Hausdorff Institute for Mathematics where this work started during
their stay at the 2019 Junior Trimester Program on Kinetic Theory.

\medskip
\emph{Data sharing not applicable to this article as no datasets were generated or analysed during the current study.}

\section{Exponential convergence to equilibrium in Fourier norms -- Proof of Theorem~\ref{k-norm-cvgce}}\label{Sec:conv:eq:fourier} 
In this section, we will give the proof of Theorem~\ref{k-norm-cvgce} relying on the representation of \eqref{eq:IB-selfsim} in Fourier variables. Notice that the gain term $\Q_{0}^+$ of the collision operator can be alternatively written as
\begin{equation*}\begin{split}
  \Q_{0}^+(f,g)(x) &= \int_\R f\left(x + \frac{y}{2}\right) g\left(x - \frac{y}{2}\right) \dy
  = 2 \int_\R f(x + y) g(x - y) \dy
  \\
 & = 2 \int_\R f(y) g(2x - y) \dy
  = 2 (f\ast g)(2x).
\end{split}\end{equation*}
This convolution nature of the collision operator makes the formulation of \eqref{eq:IB-selfsim} in terms of the  Fourier transform of $g(t)$ natural. Precisely, the Fourier transform $\varphi$ of $g$ given by
\begin{equation*}
  \varphi(t, \xi) := \int_{\R} g(t,x) e^{-ix \xi} \dx, \quad \xi \in \R
\end{equation*}
solves
\begin{equation}
  \label{eq:selfsim-fourier}
  \p_t \varphi(t,\xi) = \frac14 \,\xi  \,\p_\xi \varphi(t,\xi)
  + \varphi\Big(t, \frac{\xi}{2} \Big)^2 - \varphi(t,\xi), 
\end{equation}
with the initial condition $\varphi(0,\cdot)=\widehat{g}_{0}=:\varphi_0$.
Due to \eqref{eq:normalisation-g}, $\varphi$ satisfies for all
$t \geq 0$ that
\begin{equation}
  \label{eq:normalisation-varphi}
  \varphi(t,0) = 1,
  \qquad
  \p_\xi \varphi(t,0) = 0,
  \qquad
  \p_\xi^2 \varphi(t,0) = -1.
\end{equation}
Notice that the unique steady state $\bm{\Phi}$ of \eqref{eq:selfsim-fourier} satisfying \eqref{eq:normalisation-varphi} is given (see \cite{bobcerc1}) by
\begin{equation}\label{eq:Phi}
  \bm{\Phi}(\xi) = (1 + |\xi|) \exp\left(-|\xi|\right)\,, \qquad \xi \in\R.
\end{equation}
 which is exactly the Fourier transform of $\bm{H}$. With these observations, we can give the proof of Theorem \ref{k-norm-cvgce}.
 \begin{proof}[Proof of Theorem~\ref{k-norm-cvgce}] Assume that $g=g(t,x)$ is a solution to \eqref{eq:IB-selfsim} with the normalisation
\eqref{eq:normalisation-g}, and call $\varphi = \varphi(t,\xi)$ its
Fourier transform as before. Then $\varphi(t,\cdot)$ is a solution to
\eqref{eq:selfsim-fourier} with the normalisation
\eqref{eq:normalisation-varphi}, and we may take the difference with $\bm{\Phi}$ given in \eqref{eq:Phi} to estimates $\vertiii{\varphi(t)-\bm{\Phi}}_{k,p}$. We begin with the first part of the proof, corresponding to the special case $p=\infty.$\\

\noindent \textit{$\bullet$ The case $p=\infty$.}  We define $\psi(t,\xi)
:= \varphi(t,\xi) - \bm{\Phi}(\xi),$ for $t \geq 0$. Then, $\psi$ satisfies
\begin{equation}\label{nlinear-self-similar}
  \p_t \psi(t,\xi) =
  \frac14 \xi  \p_\xi \psi(t,\xi)
  + \psi\Big(t, \frac{\xi}{2} \Big)
  \left( \varphi\Big(t, \frac{\xi}{2} \Big)
    + \bm{\Phi}\Big(\frac{\xi}{2}
    \Big)  \right)
  - \psi(t,\xi).
\end{equation}
If we call $\left(T(t)\right)_{t\geq0}$ the semigroup associated to the operator $\psi \mapsto \frac14 \xi
\p_\xi \psi - \psi$ is given by
\begin{equation*}
  T(t) \phi(\xi) := e^{-t} \phi( \xi e^{\frac{1}{4}t}), \qquad t \geq0, \xi \in \R\,.
\end{equation*}
Then by Duhamel's formula, the solution $\psi(t)=\psi(t,\xi)$ can be written as 
\begin{equation}
  \label{eq:decay-Duhamel}
  \psi(t) = T(t) \psi_0 + \int_0^t T(t-s) A(s) \d s,\,
\end{equation}
where \begin{equation*}
  A(s)=A(s,\xi) := \psi\Big(s, \frac{\xi}{2} \Big)
  \left( \varphi\Big(s, \frac{\xi}{2} \Big)
    + \bm{\Phi}\Big(\frac{\xi}{2}
    \Big)  \right).
\end{equation*}
Now we notice that, for any $h$ such that $\vertiii{h}_{k}$ is finite,
\begin{equation}
  \label{eq:decay1}
  \vertiii{T(t) h}_{k}
  = e^{-t} \sup_{\xi \neq 0} \frac{|h(\xi e^{\frac14 t})|}{|\xi|^k}
  = e^{-(1 - \frac14 k)t}
  \sup_{\xi \neq 0} \frac{|h(\xi e^{\frac14 t})|}{|\xi e^{\frac14 t}|^k}
  = e^{-(1 - \frac14 k)t} \vertiii{h}_k.
\end{equation}
On the other hand,
\begin{equation*}
  |A(s,\xi)| \leq 2 \left|\psi\left(s, \frac{\xi}{2}\right)\right|,
\end{equation*}
since $\|\varphi(s,\cdot)\|_{L^\infty} \le \|g(s,\cdot)\|_{L^1}=1$ and 
$\|\bm{\Phi}\|_{L^\infty}  \le \|\bm{H}\|_{L^1}= 1$. This implies
\begin{equation}
  \label{eq:decay2}
   \vertiii{A(s)}_{k}
  \leq
  2 \sup_{\xi \neq 0} \frac{|\psi(s, \frac{\xi}{2})|}{|\xi|^k}
  =
  2^{1-k} \sup_{\xi \neq 0} \frac{|\psi(s, \frac{\xi}{2})|}{|\frac{\xi}{2}|^k}
  =
  2^{1-k}  \vertiii{\psi(s)}_{k}.
\end{equation}
Notice that $ \vertiii{\psi(t,\cdot)}_{k} < +\infty$ for all $0 \leq k < 3$,
since $\psi$ is a $\mathcal{C}^2$ function in $\xi$ with
$\psi(t,0) = \p_\xi \psi(t,0) = \p_\xi^2 \psi(t,0) = 0$. Using
\eqref{eq:decay1} and \eqref{eq:decay2} in \eqref{eq:decay-Duhamel} we
see that
\begin{equation*}\begin{split}
   \vertiii{\psi(t)}_k &\leq
\vertiii{T(t) \psi_0}_{k} + \int_0^t \vertiii{T(t-s) A(s)}_{k} \d s \\
&\leq
  \exp\left(-(1 - \frac14 k) t\right)\vertiii{\psi_0}_{k}
  +
  \int_0^t \exp\left(-(1 - \frac14 k) (t-s)\right) \vertiii{A(s)}_k \d s
  \\
  &\leq
  \exp\left(-(1 - \frac14 k) t\right)\vertiii{\psi_0}_{k}
  +
  2^{1-k} \int_0^t \exp\left(-(1 - \frac14 k) (t-s)\right) \vertiii{\psi(s)}_{k} \d s\,,
\end{split}\end{equation*}
which immediately gives by Gronwall's lemma that
\begin{equation*}
 \vertiii{\psi(t)}_{k} \leq \exp\left(-\sigma_{k} t\right) \vertiii{\psi_0}_{k}
  \qquad
  \text{with $\sigma_{k} := 1 - \frac14 k - 2^{1-k}$.}
\end{equation*}
We deduce the exponential convergence in Theorem \ref{k-norm-cvgce}. 

\noindent \textit{$\bullet$ The general case $p\geq1.$} For $1 \leq p < \infty$, we recall that the norms $\vertiii{\cdot}_{k,p}$, defined in \eqref{def:normkp}, are given by
\begin{equation*}
 \vertiii{\psi}_{k,p}^p := \ir \frac{|\psi(\xi)|^p}{|\xi|^{kp}} \d \xi,
\end{equation*}
and are well-defined if $|\psi(\xi)| \leq \min\{1 ,C|\xi|^3\}$ for some $C>0$  and $\frac{1}{p} < k < 3 + \frac{1}{p}$. With a similar calculation as before,
\begin{equation*}
  \vertiii{T(t) \psi}_{k,p}
  =\exp\left(-\alpha_p t\right) \vertiii{\psi}_{k,p}, \qquad  \alpha_p := 1 - \frac14 k + \frac{1}{4p}.
\end{equation*}
Also,
\begin{equation*}
  \vertiii{A(s)}_{k,p}
  \leq
  2^{1 + \frac{1}{p} - k} \vertiii{\psi(s)}_{k,p},
\end{equation*}
so we can repeat the same argument to obtain
\begin{equation*}\begin{split}
  \vertiii{\psi(t)}_{k,p} &\leq
  \vertiii{T(t) \psi_0}_{k,p} + \int_0^t \vertiii{T(t-s) A(s)}_{k,p} \d s \\
  &\leq
  \exp\left(-\alpha_p t\right)\vertiii{\psi_0}_{k,p}
  +
  \int_0^t \exp\left(-\alpha_p (t-s)\right) \vertiii{A(s)}_{k,p} \d s\\
& \leq
 \exp\left(-\alpha_p t\right)\vertiii{\psi_0}_{k,p}
  +
  2^{1 + \frac{1}{p} - k} \int_0^t \exp\left(-\alpha_p (t-s)\right) \vertiii{\psi(s)}_{k,p} \d s.
\end{split}\end{equation*}
 Then one concludes as previously using Gronwall's lemma. \end{proof}
 
\begin{rem}[\textit{\textbf{Invariance by scaling}}]\phantomsection\label{rem:scal}  
Theorem \ref{k-norm-cvgce} holds for solutions $g$ to \eqref{eq:IB-selfsim} satisfying the normalisation \eqref{eq:normalisation-g}. Recall that \eqref{eq:normalisation-g} is preserved by the nonlinear dynamics \eqref{eq:IB-selfsim}. We explain briefly how it applies to solutions of \eqref{eq:IB-selfsim} with positive energy (not necessarily unitary). Namely, assume that $\tilde{g}_{0}$ is an initial datum such that
$$\int_{\R}\tilde{g}_0(x)\dx=1, \qquad \int_{\R}\tilde{g}_{0}(x)\,x\dx=0, \qquad \int_{\R}\tilde{g}_{0}(x)x^2\dx=E >0$$
and let $\tilde{g}(t,x)$ be the associated solution to \eqref{eq:IB-selfsim}. Notice that $\tilde{g}(t,x)$ shares the same mass, momentum and energy with $\tilde{g}_0$ for any $t\geq0.$ Setting
$$g_{0}(x)= {\frac1{\lambda}}\,\tilde{g}_0\left( {\frac{x}{\lambda}} \right), \qquad \lambda= {\frac1{\sqrt{E}}},$$
one sees that $g_0$ satisfies \eqref{eq:normalisation-g}. Denoting by $g(t,x)$ the associated solution to \eqref{eq:IB-selfsim},  the scaling invariance property of $\Q_{0}$ implies that
$$g(t,x)= {\frac1{\lambda}}\,\tilde{g}\left(t, {\frac{x}{\lambda}} \right), \qquad \lambda= {\frac1{\sqrt{E}}}$$
while Theorem \ref{k-norm-cvgce} asserts that 
 \begin{equation*}
   \vertiii{\varphi(t) - \bm{\Phi}}_{k} \leq \exp\left(-\sigma_{k} t\right)\vertiii{\varphi_0 - \bm{\Phi}}_{k}\, \quad t \geq0,   \qquad \sigma_{k} := 1 - \frac14 k - 2^{1-k}\,,
  \end{equation*} 
 where $\varphi(t)$ is the Fourier transform of $g$ and $\bm{\Phi}$ that of $\bm{H}$. Denoting by  $\widetilde{\varphi}(t,\cdot)$ the Fourier transform of $\tilde{g}(t,\cdot)$, we have 
 $$\widetilde{\varphi}(t,\xi)=\varphi\left(t, {\frac{\xi}{\lambda}}\right) \qquad \text{ and } \qquad \widehat{H}_{ {\lambda}}(\xi)=\bm{\Phi}\left( {\frac{\xi}{\lambda}}\right),$$
where $\widehat{H}_{ {\lambda}}$ is the Fourier transform of the steady solution  
$$H_{ {\lambda}}(x)={ {\lambda}} \bm{H}\left({ {\lambda}} x\right), \qquad \lambda >0$$
of \eqref{eq:IB-selfsim} with unit mass, zero momentum and energy $E$. Since
$$\vertiii{\widetilde{\varphi}(t)-\widehat{H}_{ {\lambda}}}_{k}=\lambda^{k}\vertiii{\varphi(t)-\bm{\Phi}}_{k} \qquad \qquad \forall t\geq0$$
one sees that
\begin{equation*}
\vertiii{\widetilde{\varphi}(t) - \widehat{H}_{ {\lambda}}}_{k} \leq \exp\left(-\sigma_{k} t\right) \vertiii{\widetilde{\varphi}_0 - \widehat{H}_{ {\lambda}}}_{k}
    \qquad
    \text{with}  \qquad \sigma_{k} := 1 - \frac14 k - 2^{1-k}\,.
  \end{equation*}
In other words, for any choice of the initial energy $E >0,$ solutions to \eqref{eq:IB-selfsim} relax exponentially fast -- in the $\vertiii{\cdot}_{k}$ norm -- towards the unique steady solution with the prescribed energy $E$.
\end{rem}

 \section{Regularity estimates}\label{sec:regularity}
 
 The scope of this section is to study the propagation of regularity for the solutions to \eqref{eq:IB-selfsim}. which will result in the proof of Theorem \ref{theo:Sobolev}. As said, the propagation of regularity for \eqref{eq:IB-selfsim} has been addressed in \cite{FPTT} thanks to a semi-implicit discretization of the equation. We propose here a direct approach in which no iterative step is required. The strategy is purely based on comparison arguments and the construction of an upper barrier to solutions to \eqref{eq:IB-selfsim} in Fourier variable.

\subsection{Baseline regularity}\label{sec:baseline}

We begin our analysis by proving the propagation of baseline regularity of solutions, which in Fourier space follows by showing uniform propagation of decay at infinity.  To this end we present a series of lemmas with the main purpose of proving a comparison principle and showing a proper upper barrier for solutions of the rescaled Boltzmann model \eqref{eq:IB-selfsim}. 

The key argument consists in proving that estimates for \emph{low frequencies} transfer to \emph{large frequencies}. We start adopting the following notation for the drift term operator and its associated semigroup where we recall that we consider here the solutions to \eqref{eq:IB-selfsim} in Fourier variable \eqref{eq:selfsim-fourier}: we define the drift operator
\begin{equation}\label{ft:e3}
\mathcal{D} = \xi\,\partial_{\xi}
\end{equation}
and the operators
\begin{equation}\label{ft:e4}
\Gamma[u](\xi) = u\left(\frac{\xi}{2}\right)\,u\left(\frac{\xi}{2}\right)\,,\quad \text{and}\quad L\,u(\xi) = \,u\left(\frac{\xi}{2}\right)\,.
\end{equation}
With this notation, the Boltzmann equation in Fourier variable \eqref{eq:selfsim-fourier} reads
\begin{equation}\label{eq:FouriD}
\partial_{t}\varphi-\frac{1}{4}\mathcal{D}\varphi+\varphi= \Gamma[\varphi].
\end{equation}

\begin{lem}\phantomsection\label{lem:evol-family}
For a given bounded function $\sigma_{0} \in \mathcal{C}([0,\infty),L^{\infty}(\R))$, the unique solution $u\in \mathcal{C}(\Theta\,;\,L^{\infty}(\R))$ to 
\begin{equation}\label{eq:LS}
\partial_{t}u-\sigma_{0}(t,\cdot)Lu=0, \qquad u(s,s,\xi)=u_{0}, \qquad (s,t) \in  \Theta=\{(s,t) \in \R^{2}\;;t\geq s \geq0\}
\end{equation}is given by the following evolution family
$$u(s,t,\xi)=\mathcal{V}(s,t)u_{0}=\sum_{j=0}^{\infty}\mu_{j}(s,t,\xi)L^{j}u_{0}(\xi)=\sum_{j=0}^{\infty}\mu_{j}(s,t,\xi)u_{0}\left(\frac{\xi}{2^j}\right)$$
where $\mu_{0}(s,t,\xi)=1$ for any $s,t,\xi$ and
$$\mu_{j}(s,t,\xi)=\int_{\Delta_{t}^{j}(s)}\prod_{k=0}^{j-1}L^{k}\left(\sigma_{0}(s_{k},\cdot)\right)\d \bm{s}_{j}=\int_{\Delta_{t}^{j}(s)}\prod_{k=0}^{j-1}\sigma_{0}\left(s_{k},\frac{\xi}{2^{k}}\right)\d \bm{s}_{j}, \qquad j\geq1$$
with $\Delta_{t}^{j}(s)$ the simplex
$$\Delta_{t}^{j}(s)=\left\{\bm{s}_{j}=\left(s_{0},\ldots,s_{j-1}\right),\;\;\;s \leq s_{j-1} \leq s_{j-2}\leq \ldots \leq s_{1}\leq s_{0} \leq t\right\}$$
and $\d\bm{s}_{j}=\d s_{0}\ldots\d s_{j-1}$ is the usual Lebesgue measure on $\Delta_{t}^{j}(s)$.
\end{lem}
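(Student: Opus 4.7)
The plan is to verify that the explicit series representation in the statement is well-defined, satisfies \eqref{eq:LS}, and is the unique $L^\infty$-valued solution. The argument splits naturally into three steps: convergence of the series, term-by-term verification of the ODE, and a Gronwall-type uniqueness argument.

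\textbf{Step 1: convergence.} Setting $M:=\sup_{t\ge 0}\|\sigma_0(t,\cdot)\|_{L^\infty}$, the simplex $\Delta_t^j(s)$ has Lebesgue measure $(t-s)^j/j!$, so the trivial bound
$$
|\mu_j(s,t,\xi)| \le M^j\,\frac{(t-s)^j}{j!}
$$
holds uniformly in $\xi\in\R$. Since $\|L^j u_0\|_{L^\infty}=\|u_0\|_{L^\infty}$ for every $j$, the series defining $\mathcal{V}(s,t)u_0$ converges absolutely and uniformly on compact sets of $\{(s,t):t\ge s\ge 0\}$, with $\|\mathcal{V}(s,t)u_0\|_{L^\infty}\le e^{M(t-s)}\|u_0\|_{L^\infty}$. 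This makes $u\in\mathcal{C}(\Theta;L^\infty(\R))$.

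\textbf{Step 2: $\mathcal{V}(s,t)u_0$ solves \eqref{eq:LS}.} The key bookkeeping identity is the recursion
$$
\mu_j(s,t,\xi)=\int_s^t \sigma_0(s_0,\xi)\,\mu_{j-1}\!\left(s,s_0,\tfrac{\xi}{2}\right)\d s_0,\qquad j\ge 1,
$$
obtained by separating the outermost variable $s_0$ of the simplex $\Delta_t^j(s)$ and relabelling the remaining $j-1$ variables; the shift $s_k\mapsto s_{k-1}$ turns $\sigma_0(s_k,\xi/2^k)$ into $\sigma_0(s_{k-1},(\xi/2)/2^{k-1})$, which is exactly what appears in $\mu_{j-1}(s,s_0,\xi/2)$. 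Differentiating the recursion in $t$ yields $\partial_t\mu_j(s,t,\xi)=\sigma_0(t,\xi)\,L[\mu_{j-1}(s,t,\cdot)](\xi)$. Multiplying by $u_0(\xi/2^j)$, using that $L[\mu_{j-1}(s,t,\cdot)](\xi)\,u_0(\xi/2^j)=L\bigl[\mu_{j-1}(s,t,\cdot)\,L^{j-1}u_0\bigr](\xi)$, and summing over $j\ge 1$ (the $j=0$ term contributes nothing since $\mu_0\equiv 1$), the absolute convergence from Step 1 allows termwise differentiation and gives
$$
\partial_t u(s,t,\xi)=\sigma_0(t,\xi)\sum_{\ell\ge 0}L\bigl[\mu_\ell(s,t,\cdot)\,L^\ell u_0\bigr](\xi)=\sigma_0(t,\xi)\,(Lu)(s,t,\xi).
$$
The initial condition $u(s,s,\xi)=u_0(\xi)$ holds because $\Delta_s^j(s)$ has zero measure, so $\mu_j(s,s,\cdot)=0$ for $j\ge 1$, while $\mu_0(s,s,\cdot)=1$.

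\textbf{Step 3: uniqueness.} If $u_1,u_2$ are two $L^\infty$-valued solutions of \eqref{eq:LS} with the same initial datum, then $v:=u_1-u_2$ satisfies $\partial_t v=\sigma_0(t,\cdot)Lv$ with $v(s,s,\cdot)=0$. Since $L$ is an $L^\infty$-contraction ($\|Lw\|_{L^\infty}\le\|w\|_{L^\infty}$), integrating in time and taking norms gives
$$
\|v(s,t,\cdot)\|_{L^\infty}\le M\int_s^t \|v(s,r,\cdot)\|_{L^\infty}\,\d r,
$$
so Gronwall's lemma forces $v\equiv 0$. The only genuinely delicate point is the indexing in the recursion for $\mu_j$ in Step 2; once that is set up, absolute convergence justifies all term-by-term manipulations and the remainder is routine.
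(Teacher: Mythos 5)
Your proof is correct and rests on the same mechanism as the paper's: the recursion $\partial_t\mu_j(s,t,\xi)=\sigma_0(t,\xi)\,\mu_{j-1}\!\left(s,t,\tfrac{\xi}{2}\right)$ combined with the multiplicativity of $L$, followed by term-by-term differentiation of the series and the observation that $\mu_j(s,s,\cdot)=0$ for $j\ge 1$. The differences are ones of completeness rather than substance: you derive the recursion directly from the simplex representation (the paper goes the other way, defining the $\mu_j$ through the recursion and obtaining the explicit formula by induction), and you additionally supply the bound $|\mu_j(s,t,\xi)|\le M^j (t-s)^j/j!$ justifying the convergence and termwise manipulations, as well as the Gronwall argument for uniqueness, both of which the paper's proof leaves implicit.
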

\begin{proof}The proof is by direct inspection. Write, for $t \geq s\geq0$,
$$v(s,t,\xi)=\sum_{j=0}^{\infty}\mu_{j}(s,t,\xi)L^{j}u_{0}(\xi).$$
Observe that $\mu_{0}(s,s,\xi)=1$, $\mu_{j}(s,s,\xi)=0$ for all $j \geq 1$ so that $v(s,s,\cdot)=u_{0}.$ On the one hand,
$$\partial_{t}v(s,t,\xi)=\sum_{j=1}^{\infty}\partial_{t}\mu_{j}(s,t,\xi)L^{j}u_{0}(\xi)=\sum_{j=0}^{\infty}\partial_{t}\mu_{j+1}(s,t,\xi)L^{j+1}u_{0}(\xi)$$
since we assumed $\mu_{0}=1$. On the other hand,
$$Lv(s,t,\xi)=\sum_{j=0}^{\infty}L\left(\mu_{j}(s,t,\xi)L^{j}u_{0}\right)=\sum_{j=0}^{\infty}L(\mu_{j}(s,t,\xi))L^{j+1}u_{0}(\xi)$$
since $L( w_{1}\,w_{2})=L(w_{1})L(w_{2})$ (if one of the  {$w_{i}$} is bounded at least for the product to make sense). Therefore, if 
$$\partial_{t}\mu_{j+1}(s,t,\cdot)=\sigma_{0}(t,\xi)L\mu_{j}(s,t,\cdot) \qquad \mu_{j+1}(s,s,\cdot)=0\qquad \qquad j\geq 0$$
one gets that $v(s,t,\xi)$ solves \eqref{eq:LS}. By induction, since $\mu_{0}\equiv 1$, one gets the desired expression for $\mu_{j}$, $j\geq1.$
\end{proof}
\begin{rem} If $\sigma_{0}$ is constant, say $\sigma_{0}(t,\xi)=\alpha$ and $s=0$, because the volume of the simplex $\Delta_{t}^{j}(s)=\Delta_{t}^{j}(0)$ is equal to $\frac{t^{j}}{j!}$ one gets 
$$u(t,\xi)=\sum_{j=0}^{\infty}\frac{\left(\alpha t\right)^{j}}{j!}L^{j}u_{0}$$ which is exactly the expression   of the semigroup generated by the bounded operator $\alpha L$. 
\end{rem}
Our key point for the analysis is the following comparison for sub- and super-solutions to \eqref{eq:selfsim-fourier}
 in the form \eqref{eq:FouriD}:

\begin{lem}[\textit{\textbf{Comparison lemma}}]\phantomsection \label{ft:l1}
Assume bounded continuous functions $u,v \ge 0$ satisfying
\begin{subequations}
\begin{align}
\partial_{t}u + \big(-\tfrac{1}{4}\mathcal{D} + 1\big)u&\geq\Gamma[u]\,,\label{comp:e1}\\
\partial_{t}v + \big(-\tfrac{1}{4}\mathcal{D} + 1\big)v&\leq\Gamma[v]\,,\label{comp:e2}
\end{align}
\end{subequations}
and $u(0,\cdot) \geq v(0,\cdot)$.  Then $u(t,\cdot)\geq v(t,\cdot)$ for any $t \geq 0$.
\end{lem}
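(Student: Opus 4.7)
I would introduce $w := u - v$; since $w(0,\cdot) \ge 0$, the conclusion $u(t,\cdot) \ge v(t,\cdot)$ is equivalent to $w(t,\cdot) \ge 0$ for every $t \ge 0$. Subtracting \eqref{comp:e2} from \eqref{comp:e1} and using the factorisation $u(\xi/2)^2 - v(\xi/2)^2 = \bigl(u(\xi/2) + v(\xi/2)\bigr)\bigl(u(\xi/2) - v(\xi/2)\bigr)$ gives
\begin{equation*}
\bigl(\partial_t - \tfrac{1}{4}\mathcal{D} + 1\bigr) w \ge \sigma_0\, L w, \qquad w(0,\cdot) \ge 0,
\end{equation*}
with coefficient $\sigma_0(t,\xi) := u(t,\xi/2) + v(t,\xi/2)$, which is \emph{non-negative}, continuous and bounded uniformly in $(t,\xi)$ since $u$ and $v$ are. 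The positivity of $\sigma_0$ is the crucial feature driving the rest of the argument.

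Next I would remove the drift and the zeroth-order term simultaneously by moving along the characteristics $\xi(t) = \xi_0\,e^{-t/4}$ of $\partial_t - \frac14\mathcal{D}$. Setting
\begin{equation*}
\hat W(t,\xi_0) := e^{t}\, w\!\left(t,\xi_0 e^{-t/4}\right), \qquad \tilde\sigma_0(t,\xi_0) := \sigma_0\!\left(t,\xi_0 e^{-t/4}\right),
\end{equation*}
and exploiting the elementary identity $(\xi_0/2)\,e^{-t/4} = (\xi_0 e^{-t/4})/2$, a short chain-rule computation converts the previous inequality into
\begin{equation*}
\partial_t \hat W(t,\xi_0) \ge \tilde\sigma_0(t,\xi_0)\, L\hat W(t,\xi_0), \qquad \hat W(0,\cdot) = w(0,\cdot) \ge 0,
\end{equation*}
which is exactly the inequality version of the linear Cauchy problem \eqref{eq:LS} solved in Lemma \ref{lem:evol-family} with coefficient $\tilde\sigma_0$ in place of $\sigma_0$.

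To conclude I would integrate this inequality in time and iterate the resulting integral inequality. Since $\tilde\sigma_0 \ge 0$, plugging pointwise lower bounds for $\hat W$ back into the right-hand side does not flip the sign, so after $N$ iterations
\begin{equation*}
\hat W(t,\xi_0) \ge \sum_{j=0}^{N} \mu_j(0,t,\xi_0)\, \hat W\!\left(0, \xi_0/2^j\right) + R_N(t,\xi_0),
\end{equation*}
where the $\mu_j$ are precisely the coefficients built from $\tilde\sigma_0$ in Lemma \ref{lem:evol-family}. The one step that demands some care -- and which I expect to be the main obstacle -- is the control of the remainder $R_N$: from the simplex representation one obtains
\begin{equation*}
|R_N(t,\xi_0)| \le \frac{t^{N+1}}{(N+1)!}\, \|\tilde\sigma_0\|_\infty^{N+1}\, \sup_{0 \le s \le t}\|\hat W(s,\cdot)\|_\infty,
\end{equation*}
and since $u,v$ are bounded, so is $\hat W$ on every compact time interval, whence $R_N \to 0$ as $N \to \infty$. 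Passing to the limit and using $\mu_j \ge 0$ (inherited from $\tilde\sigma_0 \ge 0$) together with $\hat W(0,\cdot) \ge 0$ yields $\hat W \ge 0$, and reversing the change of variables delivers $w(t,\xi) \ge 0$ for all $t \ge 0$ and $\xi \in \R$, i.e.\ $u(t,\cdot) \ge v(t,\cdot)$.
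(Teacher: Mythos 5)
Your proof is correct, but it follows a genuinely different route from the paper. The paper proves the comparison by an $L^1$ Gronwall argument: it sets $h=v-u$, multiplies the resulting inequality by $\mathrm{sign}^+(h)$, integrates in $\xi$ (this step is explicitly only formal, since integrability of $h^+(t,\cdot)$ over $\R$ is not guaranteed for merely bounded continuous $u,v$), and concludes $\|h^+(t)\|_{L^1}=0$ from $h^+(0)=0$ via Gronwall. You instead work pointwise: after factorising $\Gamma[u]-\Gamma[v]=\sigma_0\,Lw$ with $\sigma_0\geq 0$, you straighten the drift along the characteristics $\xi_0 e^{-t/4}$ (using that $L$ commutes with the dilation, exactly as the paper does later in the proof of Lemma \ref{short-time-lemma}), iterate the Duhamel inequality, recognise the coefficients $\mu_j\geq 0$ of the evolution family of Lemma \ref{lem:evol-family}, and kill the remainder by the $t^{N+1}/(N+1)!$ simplex-volume bound together with the boundedness of $u,v$ on compact time intervals. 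Your argument buys a fully pointwise proof in the sup norm, free of the integrability-in-$\xi$ issue that makes the paper's computation formal, and it makes explicit the positivity of the evolution family that the paper exploits anyway in the short-time lemma; the paper's route is shorter and avoids the iteration/remainder bookkeeping. Both proofs require the same mild regularity in $t$ to pass from the differential to the integral inequality, so yours is at least as rigorous as the original.
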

\begin{proof}
For functions $u$ and $v$ satisfying \eqref{comp:e1}-\eqref{comp:e2} with 
$\|u(t)\|_{\infty} \leq M,$ $\|v(t)\|_{\infty} \leq M,$
define 
$$S(t,\xi):=u\left(t,\frac{\xi}{2}\right) + v\left(t,\frac{\xi}{2}\right) \in[0,2M].$$ Then, one deduces for the difference $h(t,\xi):=v(t,\xi)-u(t,\xi)$ the relation
\begin{equation*}
\partial_{t}h(t,\xi) -\frac{\xi}{4}\partial_{\xi}h(t,\xi) +h(t,\xi) \le v^2\left(t,\frac{\xi}{2}\right)-u^2\left(t,\frac{\xi}{2}\right)= S(t,\xi) \; h\left(t,\frac{\xi}{2}\right)\,.
\end{equation*}
Multiplying the above equation with  $\mathrm{sign}^+(h(t,\xi))$ and integrating with respect to $\xi\in \R$, we formally obtain 
$$\frac{\d}{\d t } \|h^+(t)\|_{L^1} +\frac54  \|h^+(t)\|_{L^1}\le \int_{\R}  S(t,\xi) \; h\left(t,\frac{\xi}{2}\right)\mathrm{sign}^+(h(t,\xi)) \, \d \xi.$$
Since $ S\ge 0$, it leads to 
$$\frac{\d}{\d t } \|h^+(t)\|_{L^1} +\frac54  \|h^+(t)\|_{L^1}\le \int_{\R}  S(t,\xi) \; h^+\left(t,\frac{\xi}{2}\right)\, \d \xi \le 2\|S(t)\|_{L^\infty} \|h^+(t)\|_{L^1}\,.$$
We then deduce from the Gronwall Lemma and $h^+(0)=0$ that $h^+(t)=0$ for any $t\ge 0$. Hence $v\le u $ in $(0,\infty)\times \R$. 
\end{proof}
With this, we have the following propagation of smoothness, which generalises \cite[Theorem 4]{FPTT}:
\begin{prp}[\textit{\textbf{Propagation of strong smoothness}}]\phantomsection\label{strong-smoothness} Let $\varphi(t)=\varphi(t,\xi)$ be a solution to the nonlinear equation \eqref{eq:selfsim-fourier}  with $\|\varphi(t)\|_{L^\infty}\leq 1$. Assume there exists $a >0$ such that $|\varphi(0,\xi)| \leq \bm{\Phi}(a\,\xi)$ for any $\xi \in \R$.  Then,
\begin{equation*}
|\varphi(t,\xi)|\leq\bm{\Phi}(a\,\xi) \quad \text{for all} \quad t\geq0\,,\,\,\xi \in \R.
\end{equation*}
\end{prp}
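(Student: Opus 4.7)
The plan is to invoke the Comparison Lemma (Lemma \ref{ft:l1}) with the stationary super-solution $u(t,\xi) := \bm{\Phi}(a\xi)$ and the sub-solution $v(t,\xi) := |\varphi(t,\xi)|$. The hypothesis $|\varphi(0,\xi)| \leq \bm{\Phi}(a\xi)$ provides the initial ordering $u(0,\cdot) \geq v(0,\cdot)$, and both functions are non-negative and bounded by $1$ (since $\|\varphi(t)\|_{L^\infty} \leq 1$ and $(1+|\eta|)e^{-|\eta|} \leq 1$ for all $\eta$). So the proof reduces to verifying the differential inequalities \eqref{comp:e1}--\eqref{comp:e2}.

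For the super-solution, I would first recall that $\bm{\Phi}$, as given in \eqref{eq:Phi}, is a stationary solution of \eqref{eq:selfsim-fourier}, so
$$-\tfrac{1}{4}\mathcal{D}\bm{\Phi} + \bm{\Phi} = \Gamma[\bm{\Phi}] \qquad \text{on } \R.$$
Since the drift $\mathcal{D}=\xi\partial_\xi$ is scale-invariant and $\Gamma$ acts through the dilation $\xi\mapsto \xi/2$ on both sides, evaluating the above identity at $a\xi$ shows that $\xi\mapsto\bm{\Phi}(a\xi)$ is itself a stationary solution of \eqref{eq:FouriD} for every $a>0$. In particular equality holds in \eqref{comp:e1}, so $u(t,\xi)=\bm{\Phi}(a\xi)$ is an admissible super-solution.

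For the sub-solution step, I would differentiate $|\varphi|^2 = \varphi\bar{\varphi}$ and use \eqref{eq:selfsim-fourier} to obtain, formally at points where $\varphi(t,\xi)\neq 0$,
$$\partial_t|\varphi|(t,\xi) = \tfrac{1}{4}\mathcal{D}|\varphi|(t,\xi) - |\varphi|(t,\xi) + \frac{\mathrm{Re}\bigl(\bar{\varphi}(t,\xi)\,\varphi(t,\xi/2)^{2}\bigr)}{|\varphi(t,\xi)|}\,.$$
The last term is bounded above by $|\varphi(t,\xi/2)|^{2} = \Gamma[|\varphi|](t,\xi)$, which yields exactly \eqref{comp:e2}. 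The one subtlety is the non-differentiability of $|\varphi|$ at the zeros of $\varphi$; I would dispatch this by the standard regularisation $|\varphi|_{\epsilon}:=\sqrt{|\varphi|^{2}+\epsilon}$, for which the analogous inequality holds everywhere and is stable as $\epsilon\to 0^{+}$ under the uniform bound $|\varphi|\leq 1$. With both inequalities in hand, Lemma \ref{ft:l1} directly yields $|\varphi(t,\xi)|\leq \bm{\Phi}(a\xi)$ on $[0,\infty)\times\R$. The main (and essentially only) obstacle is the regularisation issue just described; the structural ingredients --- that $\bm{\Phi}(a\cdot)$ forms a one-parameter family of stationary solutions, and that $|\varphi|$ inherits a sub-solution property from $\varphi$ --- are entirely routine.
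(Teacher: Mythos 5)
Your proof is correct and takes essentially the same route as the paper: both verify the hypotheses of the comparison Lemma~\ref{ft:l1} with a rescaled stationary profile as super-solution and $|\varphi|$ as sub-solution via the formal differentiation of the modulus, the only (immaterial) difference being that you dilate the barrier to $\bm{\Phi}(a\,\xi)$ using scale invariance of \eqref{eq:FouriD}, whereas the paper instead rescales the solution, setting $v(t,\xi)=|\varphi(t,\xi/a)|$ and comparing with $\bm{\Phi}(\xi)$. Your extra regularisation $\sqrt{|\varphi|^{2}+\epsilon}$ addresses the non-differentiability at zeros of $\varphi$, a point the paper passes over formally, and is harmless since the resulting $O(\sqrt{\epsilon})$ error is absorbed by the Gronwall step of the comparison argument.
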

\begin{proof}
Set $v(t,\xi)=\left|\varphi\left(t,\frac{\xi}{a}\right)\right|$ for $a>0$ and $u(t,\xi)=\bm{\Phi}(\xi)$.  Since $v(0,\xi) = |\varphi(0,\frac{\xi}{a})| \leq \bm{\Phi}(\xi)$ and 
\begin{align*}
      \partial_t v(t,\xi)& =\frac{1}{2\left|\varphi\left(t,\frac{\xi}{a}\right)\right|} \left(\partial_t\varphi\left(t,\frac{\xi}{a}\right)\overline{\varphi\left(t,\frac{\xi}{a}\right)} + \varphi\left(t,\frac{\xi}{a}\right)\partial_t\overline{\varphi\left(t,\frac{\xi}{a}\right)}\right)\\
      & = \frac14 \xi \partial_\xi v(t,\xi) +  {\frac1{2\left|\varphi\left(t,\frac{\xi}{a}\right)\right|} \left(\varphi\left(t,\frac{\xi}{2a}\right)^2 \overline{\varphi\left(t,\frac{\xi}{a}\right)}+\overline{\varphi\left(t,\frac{\xi}{2a}\right)} ^2 \varphi\left(t,\frac{\xi}{a}\right)\right)} -v(t,\xi) 
    \end{align*}
    with 
$${\frac1{2\left|\varphi\left(t,\frac{\xi}{a}\right)\right|} \left(\varphi\left(t,\frac{\xi}{2a}\right)^2 \overline{\varphi\left(t,\frac{\xi}{a}\right)}+\overline{\varphi\left(t,\frac{\xi}{2a}\right)} ^2 \varphi\left(t,\frac{\xi}{a}\right)\right) \le  v\left(t,\frac{\xi}{2}\right)^2},$$
all conditions (inequalities \eqref{comp:e1} and \eqref{comp:e2} and initial condition) of Lemma \ref{ft:l1} are satisfied.  Therefore, $v(t,\xi)\leq u(t,\xi)$ or, equivalently, $|\varphi(t,\xi)|\leq\bm{\Phi}(a\,\xi)$ for all $t\geq0$.
\end{proof}
\begin{rem} The above result can be compared to \cite[Theorem 4]{FPTT} since $\bm{\Phi}$ is decaying exponentially fast for large $|\xi|$ (see \eqref{eq:Phi}). Interestingly, the result here is not associated to a physical counterpart $g(t,x)$ since the inverse Fourier transform of $\varphi$ may not be positive. 
\end{rem}

Now, we present two lemmas to relax in the above lemma the exponential decay on the initial data. For any $\beta \geq 0$, we set
$$\Psi_{\beta}(r)=\langle r\rangle^{-\beta}=\left(1+r^{2}\right)^{-\frac{\beta}{2}}, \qquad r >0.$$
We will use repeatedly that $\Psi_{\beta}(\cdot)$ is non increasing with moreover
$$\Psi_{\beta}(r) \leq \min\left(1,r^{-\beta}\right) \qquad \forall r >0.$$
We have the following \emph{short time estimate} for sub-solutions to \eqref{eq:FouriD}

\begin{lem}[\textit{\textbf{Short time estimate}}]\phantomsection\label{short-time-lemma}
Fix $\beta>0$.  Assume $u(t,\xi)\in[0,1]$ satisfies the inequality
\begin{equation}\label{ft:e12}
\partial_{t}u + \big(-\tfrac{1}{4}\mathcal{D} + 1\big)u \leq \Gamma[u] \,
\end{equation}
together with
$$0\leq u(0,\xi)=u_{0}(\xi) \leq \Psi_{\beta}(|\xi|) \quad \forall \xi \in \R.$$
Assume there is $\delta >0$ such that
$$u(t,\xi)\leq\Psi_{\beta}(|\xi|) \qquad \quad \text{ for } \quad |\xi|\leq\delta, \qquad t \geq 0.$$
Then, for any $\beta' \in \left(0,\frac{\beta}{2}\right]$, there exists $\tau(\delta, \beta, \beta')>0$ such that 
$$0\leq u(t,\xi)\leq\Psi_{\beta'}(|\xi|) \qquad \text{ for any } t\in[0,\tau(\delta,\beta,\beta')], \qquad \xi \in \R.$$  The time $\tau(\delta,\beta,\beta')$ satisfies $\lim_{\beta'\rightarrow0}\tau(\delta,\beta,\beta')=+\infty$ for any fixed $\delta>0$ and $\beta>0$.
\end{lem}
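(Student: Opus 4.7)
My plan is to construct a time-dependent upper barrier $\overline u(t, \xi) := \Psi_{\beta(t)}(|\xi|)$, where $\beta: [0, \infty) \to (0, \beta]$ is a non-increasing function with $\beta(0) = \beta$, and to invoke the comparison Lemma~\ref{ft:l1}. Since $\Psi_\gamma(|\xi|)$ is non-increasing in $\gamma$ for every fixed $\xi$, the condition $\beta(t) \geq \beta'$ yields $\overline u(t, \xi) \leq \Psi_{\beta'}(|\xi|)$. At $t = 0$, the hypothesis on the initial datum gives $\overline u(0, \xi) = \Psi_\beta(|\xi|) \geq u_0(\xi)$, and both $u, \overline u$ take values in $[0, 1]$. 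Thus, once $\overline u$ is established as a super-solution of the differential inequality, Lemma~\ref{ft:l1} would conclude $u(t, \xi) \leq \overline u(t, \xi) \leq \Psi_{\beta'}(|\xi|)$ on the interval $\{s \geq 0 : \beta(s) \geq \beta'\}$, which will be the desired $[0, \tau]$.

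To verify the super-solution property $\partial_t \overline u - \tfrac14 \mathcal{D}\overline u + \overline u \geq \Gamma[\overline u]$, I would substitute $\overline u = (1+\xi^2)^{-\beta(t)/2}$, divide by $\overline u > 0$, and set $z := \xi^2$. A direct calculation reduces the inequality to the pointwise condition
\[
-\frac{\dot\beta(t)}{2}\log(1+z) + 1 + \frac{\beta(t)}{4}\frac{z}{1+z} \;\geq\; \frac{(1+z)^{\beta(t)/2}}{(1+z/4)^{\beta(t)}}, \qquad z > 0,
\]
equivalent to the scalar differential inequality $-\dot\beta(t) \geq \Lambda(\beta(t))$, where
\[
\Lambda(\gamma) := \sup_{z > 0}\frac{2\Bigl[(1+z)^{\gamma/2}(1+z/4)^{-\gamma} - 1 - \tfrac{\gamma}{4}\tfrac{z}{1+z}\Bigr]}{\log(1+z)}.
\]

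I would then study $\Lambda(\gamma)$ in some detail. A Taylor expansion at $z = 0$ gives numerator equal to $\gamma(\gamma+1) z^2/32 + O(z^3)$, while as $z \to \infty$ the numerator converges to $-1 - \gamma/4 < 0$; hence the supremum is attained at an interior $z_*$ and $\Lambda$ is non-negative, finite, and continuous. Expanding to first order in $\gamma$ gives $P(z, \gamma) = \gamma Q(z) + O(\gamma^2)$ with $Q(z) = \tfrac12 \log(1+z) - \log(1 + z/4) - z/(4(1+z))$, so that $\Lambda(\gamma) \sim 2 C \gamma$ as $\gamma \to 0^+$ with $C := \sup_{z > 0} Q(z)/\log(1 + z) > 0$. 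Solving the ODE $\dot\beta(t) = -\Lambda(\beta(t))$ with $\beta(0) = \beta$ yields a continuous non-increasing $\beta(\cdot)$, and setting $\tau(\delta, \beta, \beta') := \int_{\beta'}^{\beta} d\gamma/\Lambda(\gamma)$ gives $\beta(\tau) = \beta'$ with $\tau \sim C^{-1}\log(\beta/\beta') \to \infty$ as $\beta' \to 0$, matching the stated limit.

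The principal technical difficulty lies in the quantitative analysis of $\Lambda$, in particular locating the maximiser $z_*(\gamma)$ with enough precision to pin down the leading constant $C$ and thus secure the asserted divergence rate of $\tau$. I note that this barrier construction does not obviously exploit the running hypothesis $u(t, \xi) \leq \Psi_\beta(|\xi|)$ on $|\xi| \leq \delta$; the explicit $\delta$-dependence of $\tau$ claimed in the statement presumably arises from an alternative proof based on the evolution family of Lemma~\ref{lem:evol-family}, in which the small-frequency bound is inserted into the coefficient $\sigma_0$ to refine the resulting series representation of the sub-solution.
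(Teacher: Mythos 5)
Your barrier construction is correct and takes a genuinely different route from the paper. The paper proves the lemma through the evolution family of Lemma \ref{lem:evol-family}: writing $w=e^{t}\,\mathcal{U}(t)u$, it bounds the coefficients by $\nu_{j}\le t^{j}/j!$ (using only $u\in[0,1]$), sums the resulting series, and uses the elementary inequality $\Psi_{\beta}(2^{-j}|\xi|)\le 2^{2\beta' j}\Psi_{\beta'}\bigl(\sqrt{\beta/2\beta'}\,\delta\bigr)\Psi_{\beta'}(|\xi|)$ for $|\xi|\ge\delta$, so that the gain factor $\Psi_{\beta'}\bigl(\sqrt{\beta/2\beta'}\,\delta\bigr)$ compensates the growth $e^{(2^{2\beta'}-1)t}$ up to the explicit time $\tau(\delta,\beta,\beta')$; exactly as you guessed, the small-frequency hypothesis enters only to cover $|\xi|\le\delta$, where no such gain is available. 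Your route instead makes $\Psi_{\beta(t)}$ a super-solution and invokes the comparison Lemma \ref{ft:l1}: the reduction of the super-solution property to $-\dot\beta\ge\Lambda(\beta)$ is correct (and the constant-exponent barrier indeed fails near $\xi=0$, since $P(z,\gamma)=\tfrac{\gamma(\gamma+1)}{32}z^{2}+O(z^{3})>0$ there, which is why $\beta(t)$ must decrease), and $\Lambda$ is finite, positive and continuous, so separation of variables defines $\tau=\int_{\beta'}^{\beta}\Lambda(\gamma)^{-1}\,\d\gamma$. Two remarks. First, your argument proves a slightly stronger statement: it never uses the hypothesis on $|\xi|\le\delta$ and yields a $\delta$-independent $\tau$; this is consistent with the paper, since your exponent $\beta(t)$ still degrades to $0$ as $t\to\infty$, so the low-frequency control remains essential for the global-in-time Lemma \ref{Long-time}. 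Second, the ``principal technical difficulty'' you flag is avoidable: to get $\tau\to\infty$ you only need the one-sided bound $\Lambda(\gamma)\le C'\gamma$ for small $\gamma$, not the sharp constant $C$ nor the location of $z_{*}$. Indeed the bracket in $\Lambda$ is negative for $z>8$ (there $\sqrt{1+z}<1+z/4$), so the supremum is effectively over $z\in(0,8]$, where $e^{\gamma R(z)}-1\le \gamma R(z)\,e^{\gamma R(z)}$ with $R(z)=\tfrac12\log(1+z)-\log(1+z/4)$ bounded gives the linear-in-$\gamma$ bound uniformly, whence $\tau\ge (C')^{-1}\log\bigl(\min(\beta,1)/\beta'\bigr)$. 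In summary, the paper's proof buys a completely explicit $\tau$ and recycles machinery it has already set up, while yours buys a cleaner, hypothesis-lighter comparison argument at the cost of the elementary but slightly fussy analysis of the variational quantity $\Lambda$.
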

\begin{proof} Let $\mathcal{U}(t)$ be the semigroup associated to the generator $-\frac{1}{4}\mathcal{D}$, i.e. $\mathcal{U}(t)f(\xi)=f(\xi e^{-\frac{1}{4}t})$. Setting $w(t,\xi)=e^{t}\,\mathcal{U}(t)u(t,\xi)$ we write \eqref{ft:e12} as 
$$\partial_{t}u(t,\xi) + \big(-\tfrac{1}{4}\D +1\big)u(t,\xi) \leq u\Bigl(t,\frac{\xi}{2}\Bigr)Lu(t,\xi)$$ or equivalently
$$\partial_{t}w(t,\xi)\leq \mathcal{U}(t)u\Bigl(t,\frac{\xi}{2}\Bigr)Lw(t,\xi)$$ 
and denote by $\left(\mathcal{V}(s,t)\right)_{s,t}$ the evolution family constructed in Lemma \ref{lem:evol-family} with the choice
$$\sigma_{0}(t,\xi)=\left[\mathcal{U}(t)u\Bigl(t,\frac{\xi}{2}\Bigr)\right]=u\left(t,\frac{\xi}{2}e^{-\frac{1}{4}t}\right)\,.$$
{One has $\partial_{t}w(t,\xi) -\sigma_{0}(t,\xi)Lw(t,\xi) \leq 0$ whereas $\varpi(t,\xi)=\mathcal{V}(0,t)u_{0}(\xi)$ is a solution to $\partial_{t}\varpi(t,\xi)-\sigma_{0}(t,\xi)L\varpi(t,\xi)=0$. Therefore, arguing as in Lemma \ref{ft:l1},}
$$0 \leq w(t,\xi) \leq \mathcal{V}(0,t)u_{0}(\xi), \qquad \text{ with }  \quad \mathcal{V}(0,t)u_{0}(\xi)=\sum_{j=0}^{\infty}\nu_{j}(t,\xi)L^{j}u_{0}(\xi)$$
where $\nu_0(t,\xi)=1$ and 
$$\nu_{j}(t,\xi)=\int_{0}^{t}u\left(s_{0},\frac{\xi}{2}e^{-\frac{1}{4}s_{0}}\right)\nu_{j-1}\left(s_{0},\frac{\xi}{2}\right)\d s_{0}, \qquad  t \geq 0,\xi \in \R.$$
Then
\begin{equation}\label{eq:semi}
0 \leq u(t,\xi) \leq e^{-t}\sum_{j=0}^{\infty}\nu_{j}(t,\xi\,e^{\frac{1}{4}t})L^{j}u_0\left(\xi\,e^{\frac{1}{4}t}\right).\end{equation}
Since $u_{0}(\xi) \leq \Psi_{\beta}(|\xi|)$, 
$$0 \leq u(t,\xi) \leq e^{-t}\sum_{j=0}^{\infty}\nu_{j}(t,\xi\,e^{\frac{1}{4}t})L^{j}\Psi_{\beta}\left(|\xi|\,e^{\frac{1}{4}t}\right)=e^{-t}\sum_{j=0}^{\infty}\nu_{j}(t,\xi\,e^{\frac{1}{4}t})\Psi_{\beta}\left(2^{-j}|\xi|\,e^{\frac{1}{4}t}\right).$$
In addition, since $\Psi_{\beta}$ is non increasing and $2^{-j}|\xi|\,e^{\frac{1}{4}t}\geq 2^{-j}|\xi|$, it holds that
$$
0 \leq u(t,\xi) \leq e^{-t}\sum_{j=0}^{\infty}\nu_{j}(t,\xi\,e^{\frac{1}{4}t})\Psi_{\beta}\left(2^{-j}|\xi|\right).$$ 
By assumption $u(t,\xi)\in [0,1]$, therefore
\begin{equation}\label{eq:nuj}
\nu_{j}(t,\xi\,e^{\frac{1}{4}t}) \leq  \frac{t^{j}}{j!} \qquad \text{ and } \qquad  0 \leq u(t,\xi) \leq e^{-t}\sum_{j=0}^{\infty}\frac{t^{j}}{j!}\Psi_{\beta}\left(2^{-j}|\xi|\right).
\end{equation}
Observe that $\langle r \rangle^{a} \geq \langle \sqrt{a}\,r \rangle$ for any $a\geq1$  and $r\ge 0$ so that, for any $\beta\geq 2\beta'$, 
$$\Psi_{\beta}(|\xi|)\leq \Psi_{2\beta'}\left(\sqrt{\frac{\beta}{2\beta'}}\,|\xi|\right)$$ for any $\xi \in\R.$  Consequently,
$$
\Psi_{\beta}\big(2^{-j}\,|\xi|\big)\leq 2^{2\beta'j}\Psi_{2\beta'}\left(\sqrt{\frac{\beta}{2\beta'}}\,|\xi|\right)\leq 2^{2\beta'j}\Psi_{\beta'}\left(\sqrt{\frac{\beta}{2\beta'}}\,\delta\right)\Psi_{\beta'}(|\xi|), \qquad |\xi|\geq\delta\,.$$
Using this estimate in inequality \eqref{eq:nuj}, one sees that, for any $|\xi| \geq \delta$, it holds
\begin{equation}\label{ft:e14}
u(t,\xi)\leq  { e^{-t}}\Psi_{\beta'}(|\xi|)\Psi_{\beta'}\left(\sqrt{\frac{\beta}{2\beta'}}\,\delta\right)\sum_{j=0}^{\infty}\frac{t^{j}}{j!}2^{2\beta'\,j}\
=\Psi_{\beta'}(|\xi|)\Psi_{\beta'}\left(\sqrt{\frac{\beta}{2\beta'}}\,\delta\right)e^{(2^{2\beta'}-1)t}\,.
\end{equation}
Thus, choosing
\begin{equation*}
\tau(\delta,\beta,\beta')=\frac{ \beta' \ln\big(\langle \sqrt{\frac{\beta}{2\beta'}}\delta \rangle\big)}{2^{2\beta'}-1}\,,
\end{equation*}
we have 
$$u(t,\xi)\leq\Psi_{\beta'}(|\xi|) \qquad \text{ for } |\xi|\geq\delta \quad \text{ and } \quad t\in[0,\tau(\delta,\beta,\beta')].$$ Since, by assumption, for $|\xi| \leq \delta$ it holds $u(t,\xi)\leq\Psi_{\beta}(\xi)\leq \Psi_{\beta'}(|\xi|)$ we deduce that 
$$u(t,\xi)\leq\Psi_{\beta'}(|\xi|)$$
holds true for \emph{any} $\xi \in \R$ and $t \in [0,\tau(\delta,\beta,\beta')]$. From the definition of $\tau$, it  is clear that $\lim_{ { \beta'\rightarrow 0}} \tau(\delta,\beta,\beta')=+\infty$. 
\end{proof}

\begin{lem}[\textit{\textbf{Global-in-time estimates}}]\phantomsection\label{Long-time} Assume $u(t,\xi)\in[0,1]$ satisfies the inequality \eqref{ft:e12} for any $t\geq0$ with
$u(0,\xi)=u_0(\xi)\leq \Psi_{\beta}(|\xi|)$ for any $\xi \in \R$, for some $\beta>0$.  If 
$$u(t,\xi) \leq \Psi_{\beta}(|\xi|) \qquad \text{ for } |\xi| \leq 4, \qquad t \geq0\,,$$ 
then $u(t,\xi) \leq \Psi_{\beta}(|\xi|)$ for all $\xi\in\mathbb{R}$, $t\ge 0$.
\end{lem}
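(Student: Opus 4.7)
My plan is to compare $u$ with a small time-dependent perturbation of $\Psi_{\beta}$. First I would verify the pointwise inequality
\[
\Psi_{\beta}(|\xi|) - \tfrac14 \mathcal{D}\Psi_{\beta}(|\xi|) \geq \Psi_{\beta}(|\xi|/2)^2 \qquad\text{for all } |\xi|\geq 2\sqrt{2},
\]
which is immediate since $\mathcal{D}\Psi_{\beta}\leq 0$ while $\Psi_{\beta}(r/2)^2\leq \Psi_{\beta}(r)$ rearranges to $(1+r^2/4)^2\geq 1+r^2$, i.e.\ $r^2\geq 8$. Thus $\Psi_{\beta}$ is a stationary super-solution of the Fourier equation wherever $|\xi|\geq 4$; this is where the threshold $\delta=4$ coming from the hypothesis becomes critical.

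Second, for $\alpha>1$ and a small $\varepsilon>0$ (to be chosen later), I would introduce the time-dependent barrier
\[
v(t,\xi):=\Psi_{\beta}(|\xi|)+\varepsilon e^{\alpha t}.
\]
Since $\varepsilon e^{\alpha t}$ is constant in $\xi$, $\mathcal{D}v=\mathcal{D}\Psi_{\beta}$, and a direct computation using the previous inequality together with $\Psi_{\beta}\leq 1$ shows that on any finite time window $[0,T]$ and for $\varepsilon$ small enough (depending on $\alpha$ and $T$), $v$ is a \emph{strict} super-solution on $\{|\xi|\geq 4\}$: the additional contribution $\varepsilon\bigl(\alpha+1-2\Psi_{\beta}(|\xi|/2)\bigr)e^{\alpha t}\geq \varepsilon(\alpha-1)e^{\alpha t}$ dominates the extra term $\varepsilon^2 e^{2\alpha t}$ appearing on the right-hand side, producing a uniform positive gap $G=G(\varepsilon,\alpha,T)>0$. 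Moreover $v(0,\xi)>\Psi_{\beta}(|\xi|)\geq u_0(\xi)$ for every $\xi$.

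Third, I would argue by contradiction. Assume that there exist $t_0>0$ and $\xi_0\in\R$ with $u(t_0,\xi_0)>\Psi_{\beta}(|\xi_0|)$ and fix $T>t_0$ together with $\varepsilon$ so small that still $u(t_0,\xi_0)>v(t_0,\xi_0)$. By Lemma~\ref{short-time-lemma} with $\delta=4$ and some $\beta'=\beta'(T)\in(0,\beta/2]$, one has $u(t,\xi)\leq \Psi_{\beta'}(|\xi|)\to 0$ as $|\xi|\to\infty$ uniformly on $[0,T]$, whereas $v(t,\xi)\to \varepsilon e^{\alpha t}>0$. Consequently $u-v<0$ at spatial infinity, so by continuity there is a first time $t^{\ast}\in(0,t_0]$ at which $\sup_{\xi}\bigl(u(t^{\ast},\xi)-v(t^{\ast},\xi)\bigr)=0$, attained at some $\xi^{\ast}$. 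The low-frequency hypothesis $u\leq \Psi_{\beta}<v$ on $|\xi|\leq 4$ forces $|\xi^{\ast}|>4$. Maximality in $\xi$ gives $\mathcal{D}u(t^{\ast},\xi^{\ast})=\mathcal{D}v(t^{\ast},\xi^{\ast})$ and the first-time property gives $\partial_t u(t^{\ast},\xi^{\ast})\geq \partial_t v(t^{\ast},\xi^{\ast})$. Combining the sub-solution inequality for $u$, the bound $u(t^{\ast},\xi^{\ast}/2)^2\leq v(t^{\ast},\xi^{\ast}/2)^2$ (coming from $u(t^{\ast},\cdot)\leq v(t^{\ast},\cdot)$), and the strict super-solution property of $v$ at $|\xi^{\ast}|>4$ yields $\partial_t u(t^{\ast},\xi^{\ast})\leq \partial_t v(t^{\ast},\xi^{\ast})-G<\partial_t v(t^{\ast},\xi^{\ast})$, contradicting the opposite inequality. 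Hence $u\leq v$ on $[0,T]\times\R$; letting $\varepsilon\downarrow 0$ and then $T\to\infty$ gives the claim.

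The main obstacle I anticipate is the rigorous contact-point analysis, in particular justifying the pointwise comparison of derivatives at $(t^{\ast},\xi^{\ast})$ and ensuring that $\sup_{\xi}(u-v)$ is actually attained. Lemma~\ref{short-time-lemma} plays a double role in this respect, supplying both the decay at infinity needed for the supremum to be realised at a finite $\xi^{\ast}$ and a mechanism to pin the contact point strictly inside the region $|\xi|>4$ where $v$ is a strict super-solution. Any residual regularity issue can be handled by a standard mollification of $u$ in $\xi$ and passage to the limit.
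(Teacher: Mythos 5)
Your argument is correct in substance, but it follows a genuinely different route from the paper. The paper never performs a contact-point analysis: it integrates the differential inequality \eqref{ft:e12} via Duhamel's formula, observes that for $|\xi|\le 8e^{-t/4}$ the quadratic term only samples $u$ at frequencies $\frac{|\xi|}{2}e^{(t-s)/4}\le 4$ where the hypothesis applies, and so obtains $u(t,\xi)\le \Psi_\beta(|\xi|)e^{-t}+\Psi_\beta(|\xi|/2)^2(1-e^{-t})$; combined with the same elementary inequality you isolate, $\Psi_\beta(r/2)^2\le\Psi_\beta(r)$ for $r\ge\sqrt 8$, this pushes the bound from $|\xi|\le 4$ to $|\xi|\le 6$ on the fixed window $[0,t_0]$ with $t_0=4\log\frac43$, and then iterates geometrically in frequency ($4\cdot(3/2)^k$) and afterwards in time. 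Your proof instead builds the perturbed barrier $v=\Psi_\beta+\varepsilon e^{\alpha t}$, uses the stationary super-solution property of $\Psi_\beta$ on $|\xi|\ge 2\sqrt2$, and runs a first-touching-time maximum-principle argument, invoking Lemma~\ref{short-time-lemma} (legitimately, as it is proved beforehand and its hypotheses are exactly those of the present lemma with $\delta=4$) to guarantee decay at infinity so the contact point is attained at finite $\xi^{\ast}$ with $|\xi^{\ast}|>4$; the gap computation $\varepsilon(\alpha-1)e^{\alpha t}\ge \varepsilon^2e^{2\alpha t}$ for $\varepsilon$ small on $[0,T]$ is correct. What each approach buys: the paper's mild-formulation bootstrap needs no differentiability of $u$ beyond what Duhamel already encodes and no information at high frequencies, whereas your argument is conceptually cleaner (no frequency or time iteration) but requires pointwise comparison of $\partial_t$ and $\xi\partial_\xi$ at the contact point, hence classical regularity of $u$ (recall the intended $u=|\varphi|$ need not be differentiable where $\varphi$ vanishes), and the mollification fix is not entirely routine since neither the drift nor the dilation nonlinearity $u(\xi/2)^2$ commutes with convolution -- though this is at the same formal level at which the paper itself proves Lemma~\ref{ft:l1}.
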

\begin{proof} Inequality \eqref{ft:e12} together with
Duhamel's formula gives that
\begin{equation*}
u(t,\xi)\leq  u_{0}\left(\xi\,e^{\frac{1}{4}t}\right)e^{-t} +\int^{t}_{0}e^{-(t-s)}\left[u\left(s,\frac{\xi}{2}e^{\frac{1}{4}(t-s)}\right)\right]^{2}\d s, \qquad t \geq0.
\end{equation*}
For a given $t \geq0$, recall that $u_{0}\left(\xi\,e^{\frac{1}{4}t}\right) \leq \Psi_{\beta}\left(|\xi|\,e^{\frac{1}{4}t}\right)\leq \Psi_{\beta}(|\xi|)$ whereas, if
$|\xi| \leq 8e^{-\frac{t}{4}}$ then $\frac{|\xi|}{2}e^{\frac{1}{4}(t-s)} \leq 4$ for all $s \in [0,t]$ which by assumption gives 
$$ u\left(s,\frac{\xi}{2}e^{\frac{1}{4}(t-s)}\right)  \leq \Psi_{\beta}\left(\frac{|\xi|}{2}e^{\frac{1}{4}(t-s)}\right) \leq \Psi_{\beta}\left(\frac{|\xi|}{2}\right) \qquad \forall s \in [0,t]$$
where we used that $\Psi_{\beta}(\cdot)$ is non increasing. Consequently
$$u(t,\xi) \leq \Psi_{\beta}(|\xi|)e^{-t} + \Psi_{\beta}\left(\frac{|\xi|}{2}\right)^{2}(1-e^{-t})\,,\quad 0\leq |\xi|\leq 8e^{-t/4}\,.$$
In particular, setting 
$$t_{0}:=4\log\frac{4}{3}$$
so that $|\xi| \leq 6 \Longrightarrow |\xi| \leq 8e^{-\frac{t}{4}}$ for $t \in [0,t_{0}]$, one deduces that
\begin{equation}\label{eq:upsib}
u(t,\xi) \leq \Psi_{\beta}(|\xi|)e^{-t} + \Psi_{\beta}\left(\frac{|\xi|}{2}\right)^{2}(1-e^{-t})\,,\quad 0\leq |\xi|\leq 6, \qquad t \in [0,t_{0}].\end{equation}
Since $\Psi_{\beta}\left(\frac{|\xi|}{2}\right)^{2}\leq \Psi_{\beta}(|\xi|)$ for $|\xi|\geq {\sqrt{8}}$, one deduces that,
 \begin{equation*}
u(t,\xi) \leq  \Psi_{\beta}(|\xi|)\,,\qquad \qquad  {\sqrt{8}} \leq |\xi|\leq 6\,\qquad t \in [0,t_{0}]
\end{equation*}
which, by assumption, yields
$$u(t,\xi) \leq \Psi_{\beta}(|\xi|) \qquad \quad \text{ for all } 0 \leq |\xi| \leq 6, \qquad t \in [0,t_{0}].$$
Iterating this process $k$-times one gets 
\begin{equation*}
u(t,\xi) \leq \Psi_{\beta}(|\xi|)\,,\qquad 0\leq |\xi|\leq 4\cdot \left(\frac{3}{2}\right)^{k}\,,\quad t\in[0,t_{0}]\,.
\end{equation*}
Since $k$ is arbitrary, we get
$$u(t,\xi) \leq \Psi_{\beta}(|\xi|), \qquad \text{ for all } \xi \in \R\,,\quad t \in [0,t_{0}].$$ Since then, for any $t \geq t_{0}$
$$u(t,\xi) \leq e^{-(t-t_{0})}u\left(t_0,\xi\,e^{\frac{1}{4}(t-t_{0})}\right)+\int_{0}^{t-t_{0}}e^{-(t-t_{0}-s)}\left[u\left(s+t_0,\frac{\xi}{2}e^{\frac{1}{4}(t-t_{0}-s)}\right)\right]^{2}\d s$$
one can reproduce the above argument to show that the bound $u(t,\xi) \leq \Psi_{\beta}(|\xi|)$ holds also on the interval $[t_{0},2t_{0}]$. Iterating the procedure, the bound holds for any time $t\geq0$ and any $\xi \in \R.$
\end{proof}
We are in conditions to prove the main result of the section.
\begin{theo}\label{theo-baseline}
Let $\varphi(t,\xi)$ be a solution of the self-similar problem \eqref{eq:selfsim-fourier} satisfying $\|\varphi(t)\|_{L^\infty}\leq 1$ and with initial condition $\varphi_0$ enjoying the regularity
\begin{equation}\label{eq:assPsi}
\vertiii{\varphi_0 - \bm{\Phi}}_{k} <\infty \qquad \text{and} \qquad |\varphi_{0}(\xi)|\leq \Psi_{\beta}(c|\xi|), \qquad \forall \xi \in\R
\end{equation}
for some $k\in(2,3)$, $c\in(0,1]$, and $\beta>0$.  Then,
\begin{equation}\label{under}
\sup_{t\geq0}|\varphi(t,\xi)| \leq \Psi_{\beta}(c_{0}|\xi|)\,,\,\qquad \quad \xi \in \R
\end{equation}for some positive constant $c_{0} >0$ depending only on $\beta$, $c$, and $\vertiii{\varphi_0 - \bm{\Phi}}_{k}$.
\end{theo}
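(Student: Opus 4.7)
The plan is to combine the $\vertiii{\cdot}_k$-convergence of Theorem \ref{k-norm-cvgce} with the propagation-of-decay Lemma \ref{Long-time}, glued together by a rescaling in frequency. First I would translate the $\vertiii{\cdot}_k$-convergence into a pointwise bound: because $\psi(t,\xi):=\varphi(t,\xi)-\bm{\Phi}(\xi)$ vanishes together with its first two derivatives at $\xi=0$ by \eqref{eq:normalisation-varphi}, one has $|\psi(t,\xi)|\leq\vertiii{\psi(t)}_{k}\,|\xi|^{k}$, and Theorem \ref{k-norm-cvgce} then yields
\begin{equation*}
|\varphi(t,\xi)|\leq \bm{\Phi}(\xi)+C_0\,e^{-\sigma_k t}|\xi|^k\leq \bm{\Phi}(\xi)+C_0|\xi|^k,\qquad t\geq 0,\ \xi\in\mathbb R,
\end{equation*}
with $C_0:=\vertiii{\varphi_0-\bm{\Phi}}_k<\infty$.

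Next, for a constant $c_0\in(0,c]$ to be fixed later, I would introduce $u(t,\xi):=|\varphi(t,\xi/c_{0})|$. Repeating the computation of Proposition \ref{strong-smoothness}, $u$ takes values in $[0,1]$ and satisfies the sub-solution inequality \eqref{ft:e12}, while $u(0,\xi)\leq\Psi_\beta(c|\xi|/c_0)\leq\Psi_\beta(|\xi|)$ thanks to $c_0\leq c$ and the monotonicity of $\Psi_\beta$. If the low-frequency hypothesis $u(t,\xi)\leq\Psi_\beta(|\xi|)$ for $|\xi|\leq 4$ and every $t\geq 0$ can be established, then Lemma \ref{Long-time} delivers $u(t,\xi)\leq\Psi_\beta(|\xi|)$ on all of $\mathbb R\times[0,\infty)$, which after unrescaling is exactly \eqref{under} with constant $c_0=c_0(\beta,c,C_0)$.

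The hard part is precisely this uniform low-frequency bound. In the original variable $\eta=\xi/c_0$ it reads $|\varphi(t,\eta)|\leq\Psi_\beta(c_0|\eta|)$ on $|\eta|\leq 4/c_0$ uniformly in $t\geq 0$, and by the first step it is enough to arrange
\begin{equation*}
\bm{\Phi}(\eta)+C_0|\eta|^k\leq\Psi_\beta(c_0|\eta|)\qquad \forall\,|\eta|\leq 4/c_0.
\end{equation*}
A Taylor expansion near the origin gives $\Psi_\beta(c_0\eta)-\bm{\Phi}(\eta)=(1-\beta c_0^2)\eta^2/2+O(|\eta|^3)$, so choosing $c_0<1/\sqrt\beta$ — which is compatible with the constraint $\beta c^2\leq 1$ forced on the data by evaluating $|\varphi_0|\leq\Psi_\beta(c|\xi|)$ near $\xi=0$ — opens up a positive $\eta^2$-margin that absorbs the remainder $C_0|\eta|^k$ (since $k>2$) on a neighbourhood of $0$. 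Away from the origin, the exponential decay of $\bm{\Phi}$ beats the polynomial decay of $\Psi_\beta$, and one can close the inequality on the rest of $[0,4/c_0]$ by taking $c_0$ quantitatively small in terms of $\beta$, $c$ and $C_0$. Balancing the growth of the endpoint $4/c_0$ against the size of the polynomial remainder $C_0(|\eta|/c_0)^k$ is the delicate technical point and dictates the final dependence $c_0=c_0(\beta,c,C_0)$; once it is secured, the conclusion follows at once from Lemma \ref{Long-time}.
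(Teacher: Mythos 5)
There is a genuine gap, and it sits exactly where you flagged ``the delicate technical point'': the inequality $\bm{\Phi}(\eta)+C_0|\eta|^k\leq\Psi_{\beta}(c_0|\eta|)$ cannot be achieved on the whole window $|\eta|\leq 4/c_0$ by taking $c_0$ small --- smallness of $c_0$ makes it \emph{worse}, not better. At the endpoint $|\eta|=4/c_0$ the left-hand side is at least $C_0(4/c_0)^k$, which blows up like $c_0^{-k}$, while the right-hand side equals $\Psi_{\beta}(4)=17^{-\beta/2}$, a fixed constant independent of $c_0$; already at $|\eta|=1/c_0$ the comparison $C_0c_0^{-k}\leq 2^{-\beta/2}$ fails for any general datum. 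The root cause is that the only uniform-in-time pointwise information you extract, $|\varphi(t,\eta)|\leq\bm{\Phi}(\eta)+C_0|\eta|^k$ (you drop the factor $e^{-\sigma_k t}$, and you must drop it, since the hypothesis of Lemma \ref{Long-time} includes $t=0$), is useless at moderate and large frequencies: it grows like $|\eta|^k$ whereas the target decays. And you are trapped in a vicious circle: shrinking $c_0$ buys margin near the origin (where $F''(0)$-type expansions work) but enlarges the window $|\eta|\leq 4/c_0$ on which the Fourier-norm bound gives nothing. Note also that $c_0\leq c\leq 1$ is forced by your rescaled initial condition, so you cannot escape by taking $c_0$ large.

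The paper's proof breaks this circle with two ingredients your proposal omits. First, it keeps the factor $e^{-\sigma_k t}$ and uses it to get the bound on the \emph{fixed} window $|\xi|\leq 4$ only for $t\geq t_*$ (the function $G_t$ there), accepting a degraded exponent $\alpha<\min\{e^{-4}/2,c^2\beta\}$. Second, the remaining strip $t\in[0,t_*]$, all frequencies, is covered not by the $\vertiii{\cdot}_k$ bound but by the short-time Lemma \ref{short-time-lemma}, which propagates the decay of the \emph{initial datum} through Duhamel/evolution-family estimates for a finite time $\tau(\delta,\alpha,\alpha')$, with $\alpha'$ chosen small enough that $\tau\geq t_*$. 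Only then does Lemma \ref{Long-time} apply (with exponent $\alpha'$), and a final bootstrap of Duhamel's formula restores the exponent from $\alpha'$ back up to $\beta$ at the price $c_0=c\,2^{-j}$. Your skeleton (pointwise bound near the origin, rescaling, comparison, Lemma \ref{Long-time}) matches the paper's, but without the short-time lemma, the use of the exponential time decay for $t\geq t_*$, and the exponent-degradation-plus-bootstrap, the low-frequency hypothesis of Lemma \ref{Long-time} cannot be verified and the argument does not close.
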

\begin{proof}
Let $k \in (2,3)$ be given with $C_{k}:=\vertiii{\varphi_{0}-\bm{\Phi}}_{k} < \infty.$ Theorem \ref{k-norm-cvgce} states that
\begin{equation*}
\vertiii{\varphi(t) - \bm{\Phi}}_{k} \leq C_{k}\,\exp\left(-\sigma_k t\right), \qquad \forall t \geq 0\end{equation*}
 with $\sigma_k = 1 - \frac14 k - 2^{1-k}>0\,.$ Therefore, for any $\xi \in \R$ 
\begin{equation}\label{eq:varPhiCk} 
|\varphi(t,\xi)| \leq \bm{\Phi}(\xi) + C_{k}|\xi|^{k}\exp\left(-\sigma_k t\right) \leq (1+|\xi|)e^{-|\xi|}+C_{k}|\xi|^{k}
\,\qquad \forall  t \geq0.\end{equation}
For any $\alpha \in (0,1),$ the mapping $F(r)=(1+r)e^{-r}+C_{k}r^{k}-(1+r^{2})^{-\frac{\alpha}{2}}$ is such that
$$F(0)=F'(0)=0, \qquad F''(0)=-1+\alpha < 0$$
from which one sees that there is $\delta >0$ (depending on $\alpha$ and $C_{k}$) such that $F(r) \leq 0$ for $r \in (0,\delta)$, i.e. 
\begin{equation}\label{small-freq}
|\varphi(t,\xi)| \leq \Psi_{\alpha}(|\xi|) \qquad \forall |\xi| \leq \delta\,,\,\quad t\geq0.\end{equation}
For large time, we  introduce, for $\alpha \in (0,1)$,
$$G_{t}(r)=(1+r)e^{-r}+C_{k}r^{k}e^{-\sigma_k t}-(1+r^{2})^{-\frac{\alpha}{2}}, \qquad r >0.$$
One first observes that
\begin{equation}\label{pou}
G_{t}(r) \leq (1+r)e^{-r}-1+\frac{\alpha}{2}r^{2} + C_{k}r^{k}=(1+r) e^{-r} -1+ \alpha r^2 + C_k r^k - \frac{\alpha}2 r^2,
\end{equation}
with  
$$(1+r) e^{-r} -1+ \alpha r^2 \le 0 \qquad  \mbox{ for any } 0 \le r \le 4,$$
 when $\alpha < \frac{e^{-4}}2$ and  $$ C_k r^k - \frac{\alpha}2 r^2 \le 0 \qquad \mbox{ for any }  0 \le r \le \left(\frac{\alpha}{2C_k}\right)^{\frac{1}{k-2}}.$$ 
 Therefore,  if $\alpha < \frac{e^{-4}}2$, then
$$ G_{t}(r) \le 0 \qquad \mbox{ for any } t\ge0 \mbox{ and } 0 \le r \le r_{\alpha,k},$$ 
where  $r_{\alpha,k}:=\min\left\{ \left(\frac{\alpha}{2C_k}\right)^{\frac{1}{k-2}},4\right\}>0$. Now, for $ r_{\alpha,k}\le r\le 4$, we have, again with \eqref{pou} 
$$G_t(r)\le h_{\alpha} (r_{\alpha,k}) + C_{k} 4^{k}e^{-\sigma_k t},$$
since $h_{\alpha}(r):=(1+r)e^{-r}-1+\frac{\alpha}{2}r^{2}$ is decreasing on $[r_{\alpha,k},4]$ when $\alpha < \frac{e^{-4}}2  < e^{-4}$. Note that $h_{\alpha}(r_{\alpha,k})<0$.  Choosing $t_*\ge \frac{-1}{\sigma_k} \log\left(-\frac{1}{C_k 4^k} h_{\alpha}(r_{\alpha,k})\right)$, we obtain that 
$$\max_{0 \leq r \leq 4}G_{t}(r) \leq 0, \qquad \forall t \geq t_{*}.$$
From this we  conclude that
\begin{align}\label{large-freq}
|\varphi(t,\xi)|\leq\Psi_{\alpha}(|\xi|)\,,\quad\text{for}\quad |\xi| \leq 4\,,\quad t \geq t_{*}\,.
\end{align}
Observe now that $\Psi_{\beta}(c\,|\xi|)\leq\Psi_{\alpha}\big(\sqrt{\frac{\beta}{\alpha}}\,c \,|\xi|\big)$ for $\alpha \in {(0, \,\beta]}$.  Hence, choosing $\alpha=\min\{ {\frac{e^{-4}}{2}},\,c^2\beta\}$ it holds $|\varphi_{0}(\xi)|\leq \Psi_{\alpha}(|\xi|)$ for any $\xi \in \R$. Given the estimate \eqref{small-freq}, we may invoke Lemma \ref{short-time-lemma} with $u(t,\xi)=|\varphi(t,\xi)|$, $\alpha\in (0,1)$, and $\alpha'\in(0, {\alpha/2}]$ sufficiently small such that $\tau(\delta,\alpha,\alpha')\geq t_{*}$ to obtain that
\begin{equation*}
|\varphi(t,\xi)| \leq \Psi_{\alpha'}(|\xi|)\,,\qquad \xi\in\mathbb{R}\,,\quad t\in[0,t_{*}]\,.
\end{equation*}
With this and the estimate \eqref{large-freq} we use Lemma \ref{Long-time} in the interval $[t_{*},\infty)$, with $u(t,\xi)=|\varphi(t,\xi)|$ and $ {\beta}=\alpha'$, to conclude that 
\begin{equation}\label{eq:Psib'}
|\varphi(t,\xi)|\leq \Psi_{\alpha'}(|\xi|) \qquad \text{ for all } \xi \in \R\,,\quad t\geq0\,.
\end{equation}
In order to upgrade the decay rate up to $\beta$, we can bootstrap the previous estimate after noticing that, thanks to \eqref{eq:Psib'}, 
\begin{equation*}
\Big| \varphi\Big(t, \frac\xi2 \Big)^{2} \Big| \leq \Psi_{2\alpha'}\left(\frac{|\xi|}{2}\right)\,
\end{equation*}
so that, $u(t,\xi)=|\varphi(t,\xi)|$ satisfies $\partial_{t}u + \big(-\tfrac{1}{4}\mathcal{D} + 1\big)u \leq \Psi_{2\alpha'}\left(\frac{|\xi|}{2}\right)$. Using Duhamel's formula, it holds that
\begin{equation*}
|\varphi(t,\xi)| \leq \max\left\{\Psi_{\beta}(c\,|\xi|),\Psi_{2\alpha'}\left(\frac{|\xi|}{2}\right)\right\}\,,\qquad \qquad t \geq0.
\end{equation*} 
Iterating this process, we see that, for any $j \in \N$, $j \geq 1$,
$$|\varphi(t,\xi)| \leq \max\left\{\Psi_{\beta}(c\,|\xi|),\Psi_{2\beta}\left(\frac{c\,|\xi|}{2}\right),\ldots,\Psi_{2^{j-1}\beta}\left(\frac{c\,|\xi|}{2^{j-1}}\right),\Psi_{2^{j}\alpha'}\left(\frac{|\xi|}{2^{j}}\right)\right\}\,$$
holds for any $\xi\in \R$ and $t \geq 0.$ Notice that
\begin{multline*}
  \max\left\{\Psi_{\beta}(c\,|\xi|),\Psi_{2\beta}\left(\frac{c\,|\xi|}{2}\right),\ldots,\Psi_{2^{j-1}\beta}\left(\frac{c\,|\xi|}{2^{j-1}}\right),\Psi_{2^{j}\alpha'}\left(\frac{|\xi|}{2^{j}}\right)\right\} \\
  \leq \max\left\{ \Psi_{\beta}\left(\frac{c\,|\xi|}{2^{j-1}}\right), \Psi_{2^{j}\alpha'}\left(\frac{|\xi|}{2^{j}}\right)\right\} \leq \Psi_{\beta}\left(\frac{c\,|\xi|}{2^{j}}\right),
  \end{multline*}
as soon as $2^{j}\alpha' \geq \beta$. Setting
$$c_{0}=c \,2^{-j}\quad\text{with}\quad j=\Big\lfloor\frac{\log\big(\beta/\alpha'\big)}{\log 2}\Big\rfloor+1$$
the above condition is satisfied  and the result proved.
\end{proof}

\begin{rem} As previously, our result is not associated to a physical counterpart $g(t,x)$, yet it requires boundedness $\|\varphi(t)\|_{L^\infty}\leq1$ linked to the mass of $g(t,x)$. 
We observe that, as pointed out in \cite[Lemma 14]{FPTT}, if a function $0 \leq h\in L^{1}$ with unitary norm satisfies that $\sqrt{h}\in \dot{H}^{\alpha}(\R)$ then $|\widehat{h}(\xi)| \leq \Psi_{\alpha}(c\,|\xi|)$ with $c^{-\alpha}=\max\{2,2^{\alpha}\}\|\sqrt{h}\|_{\dot{H}^{\alpha}}$.
\end{rem}

\begin{rem} Notice that, Theorem \ref{theo-baseline} provides the uniform in time propagation of regularity for the solution to \eqref{eq:IB-selfsim} under the strong assumption \eqref{eq:assPsi}. Indeed, observing that
$$\int_{\R}\left|\Psi_{\alpha}(c|\xi|)\right|^{2}(1+|\xi|^{2})^{s}\d \xi < \infty \qquad \forall 0 \leq s < \alpha-\frac{1}{2}$$
we deduce that, under assumption \eqref{eq:assPsi}, a solution $g=g(t,x)$ to \eqref{eq:IB-selfsim} belongs to $H^{s}(\R)$ for any $0\le s< \alpha-\frac{1}{2}$ with a uniform in time estimate on $\|g(t)\|_{H^{s}}$.\end{rem}
\subsection{Higher regularity norms -- Proof of Theorem~\ref{theo:Sobolev}} As just observed, Theorem \ref{theo-baseline} provides the uniform in time  regularity $H^{s}(\R)$ bounds for the solution to \eqref{eq:IB-selfsim} for small values of $s.$ We aim now to prove the regularity  as well as the convergence in Sobolev spaces $H^{\ell}(\R)$ with higher regularity. The starting point is the propagation of baseline regularity \eqref{under}:
\begin{proof}[Proof of Theorem~\ref{theo:Sobolev}] As before, we call $\varphi = \varphi(t,\xi)$
    the Fourier transform of $g$ which is a solution to
\eqref{eq:selfsim-fourier} with the normalisation
\eqref{eq:normalisation-varphi}. The assumption on $\widehat{g}_{0}(\xi)$ means that
  $$|\varphi_{0}(\xi)| \leq \Psi_{\beta}(c|\xi|) \qquad \forall \xi \in\R$$
  for some $c \in (0,1)$ and $\beta >0$. From \eqref{under} we deduce that
  $$|\varphi(t,x)| \leq \Psi_{\beta}(c_{0}|\xi|), \qquad \forall t \geq0, \,\xi \in \R$$
  for some positive constant $c_{0}=c_{0}(\beta,c,\vertiii{\varphi_{0}-\bm{\Phi}}_{k}) >0$ provided $\vertiii{\varphi_{0}-\bm{\Phi}}_{k} < \infty$ where $\bm{\Phi}$ is given by \eqref{eq:Phi}. We consider the difference $\psi(t,\xi)
:= \varphi(t,\xi) - \bm{\Phi}(\xi)$  which satisfies \eqref{nlinear-self-similar}. We introduce the notation 
$$\phi_{m}:=|\xi|^{m}\phi$$ for any $m>0$ and any mapping $\phi=\phi(\xi)$.  Multiplying the self-similar equation \eqref{nlinear-self-similar} by $|\xi|^{m}$ we obtain that the mapping $\psi_{m}(t,\xi)=|\xi|^{m}\psi(t,\xi)$ satisfies
\begin{equation*}
  \p_t \psi_m =
  \frac14 \,\xi\,  \p_\xi \psi_m
  + 2^{m}\psi_m\Big(t, \frac{\xi}{2} \Big)\varphi\Big(t, \frac{\xi}{2}\Big)
    + 2^{m}\psi\Big(t, \frac{\xi}{2} \Big)\bm{\Phi}_{m}\Big(\frac{\xi}{2}
    \Big) - \Big(1+\frac{m}{4}\Big)\psi_m.
\end{equation*}
We define $\left(T_{m}(t)\right)_{t\geq0}$ the semigroup associated to $\frac14 \xi  \p_\xi - \big(1+\frac{m}{4}\big)$, i.e.
$$T_{m}(t)g(\xi)=e^{-\left(1+\frac{m}{4}\right)t}g\left(\xi\,e^{\frac{1}{4}t}\right), \qquad t \geq0$$
and
\begin{equation*}
A_{m}(t,\xi) := 2^{m}\psi_m\Big(t, \frac{\xi}{2} \Big)\varphi\Big(t, \frac{\xi}{2}\Big)\,,\qquad B_{m}(t,\xi):=2^{m}\psi\Big(t, \frac{\xi}{2} \Big)\bm{\Phi}_{m}\Big( \frac{\xi}{2}
    \Big)
\end{equation*}
so that
\begin{equation} \label{eq:decay-Duhamel-m}
  \psi_{m}(t) = T_{m}(t) \psi_{m}(0) + \int_0^t T_{m}(t-s)\big( A_{m}(s) + B_{m}(s) \big)  \d s\,.
\end{equation}
Note that, arguing as in the proof of Theorem~\ref{k-norm-cvgce}, for any suitable $h$,
\begin{equation*}
  \vertiii{T_{m}(t) h }_{k,p}
  = \exp\left(-\alpha_{m,p} t\right) \vertiii{h}_{k,p} \quad \text{ with } \quad  \alpha_{m,p} := 1 + \frac{m-k}{4} + \frac{1}{4p}\,,
\end{equation*}
while, for any $s \geq0, m \geq \beta$,
\begin{equation*}\begin{split} 
  \vertiii{A_{m}(s)}_{k,p}
   \leq
  2^{m - k + \frac{1}{p}} \vertiii{\psi_{m}(s)\,\varphi(s)}_{k,p}&\leq 2^{m - k + \frac{1}{p}}\vertiii{\psi_{m-\beta}(s)}_{k,p}\|\varphi_{\beta}(s)\|_{L^\infty},\\
 \vertiii{B_{m}(s)}_{k,p}
  &\leq
  2^{m - k + \frac{1}{p}}\| \bm{\Phi}_m\|_{L^\infty} \vertiii{\psi(s)}_{k,p}\,.
\end{split}\end{equation*}
Observe that H\"older's inequality implies that
\begin{equation*}
\vertiii{\psi_{m-\beta}(s)}_{k,p} \leq \vertiii{\psi_{m}(s)}^{1-\frac{\beta}{m}}_{k,p}\vertiii{\psi(s)}^{\frac{\beta}{m}}_{k,p}\,,\qquad m\geq\beta, 
\end{equation*}
and \eqref{under} leads to $\|\varphi_{\beta}(s)\|_{ L^\infty}\leq c^{-\beta}_{0}$.  Consequently, using Young's inequality we are led to
\begin{equation*}\begin{split}
  \vertiii{\psi_{m}(t)}_{k,p} &\leq
  \vertiii{T_{m}(t)\psi_{m}(0)}_{k,p} + \int_0^t \vertiii{T_{m}(t-s)\big(A_{m}(s) + B_{m}(s)\big)}_{k,p} \d s
  \\
  &\leq
  e^{-\alpha_{m,p} t}\vertiii{\psi_{m}(0)}_{k,p}
  +
  \int_0^t e^{-\alpha_{m,p} (t-s)} \big( \vertiii{ A_{m}(s) }_{k,p} + \vertiii{ B_{m}(s) }_{k,p}\big) \d s
  \\
  &\leq
  e^{-\alpha_{m,p} t}\vertiii{\psi_{m}(0)}_{k,p}
  +
   \int_0^t e^{-\alpha_{m,p} (t-s)} \Big(\varepsilon\,\vertiii{\psi_{m}(s)}_{k,p} 
  + \frac{C}{\varepsilon^{\frac{m}{\beta}-1}} \vertiii{\psi(s) }_{k,p}\Big)\d s,
\end{split}\end{equation*}
for a constant that can be taken as $C:=c^{-m}_{0}\,2^{(m-k+\frac1p)\frac m\beta} + 2^{m - k + \frac{1}{p}} \, \| \bm{\Phi}_m\|_{L^\infty}$.  Note that 
$$\alpha_{m,p}>\sigma_{k}(p),$$  
where we recall that $\sigma_{k}(p) = 1 - \frac14 k + \frac{1}{4p} - 2^{1 + \frac{1}{p} - k}$.  Therefore, thanks to Theorem \ref{k-norm-cvgce} it follows that
\begin{equation*}
\int_0^t e^{-\alpha_{m,p} (t-s)}\vertiii{\psi(s) }_{k,p}\d s \leq \frac{e^{-\sigma_{k}(p) t} }{\alpha_{m,p}-\sigma_{k}(p)}\, \vertiii{\psi(0)}_{k,p} \,\qquad \qquad t\geq0.
\end{equation*} 
As a consequence, calling $u(t) := e^{\sigma_{k}(p) t}\,\vertiii{ \psi_{m}(t)}_{k,p}$ we see that
\begin{equation*}
u(t) \leq \vertiii{\psi_{m}(0)}_{k,p} +  \frac{C\, \vertiii{\psi(0) }_{k,p}}{\varepsilon^{\frac{m}{\beta}-1}(\alpha_{m,p}-\sigma_{k}(p))} + \varepsilon \int_0^t u(s) \d s,
\end{equation*}
which, by Gronwall's lemma, immediately gives  that
\begin{equation}\label{higher-norm:relax}
  \vertiii{\psi_{m}(t)}_{k,p} \leq e^{-(\sigma_{k}(p) - \varepsilon) t}\,\bigg(\vertiii{\psi_{m}(0)}_{k,p} +  \frac{C\, {\vertiii{\psi(0)}_{k,p}}}{\varepsilon^{\frac{m}{\beta}-1}(\alpha_{m,p}-\sigma_{k}(p))}\bigg)\,.
\end{equation}
One chooses  $p=2$ and $m = k$ so that 
\begin{equation*}
\vertiii{\psi_{m}(t)}_{k,p} = \| \psi(t) \|_{L^2} = \| g(t) - \bm{H} \|_{L^2}\,
\end{equation*} 
thanks to Parseval identity. Moreover, one has,  {for $2<k<3$ (see Lemma \ref{lem:mukvertk})}
\begin{equation*}
 {\vertiii{\psi(0)}_{k,2}} \leq C\| g_0 - \bm{H}\|_{L^1(\w_{k})}\,.
\end{equation*}
Consequently, from \eqref{higher-norm:relax} one obtains the exponential relaxation in $L^{2}(\mathbb{R})$ as
\begin{equation}\label{L2:relax}
  \|g(t)- \bm{H}\|_{L^2} \leq e^{-(\sigma_{k}(2) - \varepsilon) t}\,\bigg( \| g_0 - \bm{H}\|_{L^2} +  \frac{C\,\| g_0 - \bm{H} \|_{ L^{1}(\w_k) } }{\varepsilon^{\frac{k}{\beta} - 1}(\alpha_{k,2}-\sigma_k(2))}\bigg)\,,\qquad \frac52 < k < {3}.
\end{equation}
More generally, for any $\ell>0$ one can choose $m=\ell+k$, $p=2$ and use the fact that 
\begin{equation*}
\vertiii{\psi_{\ell+k}(t)}_{k,2} = \| |\cdot|^{\ell}\psi(t) \|_{L^2} = \| g(t) - \bm{H} \|_{\dot{H}^{\ell}}\,.
\end{equation*} 
Consequently, \eqref{higher-norm:relax} implies that
\begin{equation}\label{Hom-H-relax}
  \| g(t) - \bm{H}\|_{\dot{H}^{\ell}} \leq e^{-(\sigma_{k}(2) - \varepsilon) t}\,\bigg( \| g_0 - \bm{H}\|_{\dot{H}^{\ell}} +  \frac{C\,\| g_0 - \bm{H} \|_{ L^{1}(\w_{k}) } }{\varepsilon^{\frac{\ell+k}{\beta} - 1}(\alpha_{\ell+k,2}-\sigma_k(2))}\bigg)\,,\qquad \frac52 < k < {3}.
\end{equation}
Estimates \eqref{L2:relax}-\eqref{Hom-H-relax} gives the theorem.\end{proof}
As stated in the introduction, the convergence in Theorem \ref{theo:Sobolev} allows to deduce the convergence in mere $L^{1}$-norm by simple interpolation:
\begin{cor}\label{cor:L1conv} Let $g(t)=g(t,x)$ be a solution to the Boltzmann problem \eqref{eq:IB-selfsim}-\eqref{eq:normalisation-g0} with initial condition $g_{0}(x)=g(0,x)$ satisfying 
$$|\widehat{g_0}(\xi) | \leq \left(1+c^{2}\,|\xi|^{2}\right)^{-\frac{\beta}{2}}, \qquad \xi \in \R$$ for some $\beta,\,c>0$.  Then, for any $\frac{5}{2} < k < {3}$ and  $0<\sigma < \frac98 - \frac k4  - 2^{\frac{3}{2} - k}$, if $g_{0} \in L^{2}(\R) \cap L^{1}(\w_{k})$, there exists $C_{0} >0$ such that
$$\|g(t)-\bm{H}\|_{L^{1}} \leq C_{0}\exp\left(-\frac{4}{5}\sigma t\right), \qquad \forall t \geq0.$$
Moreover, if $g_{0} \in H^{\ell}(\R) \cap L^{1}(\w_{k})$  {for some $\ell>\frac12$}, there exists $C_{1} >0$ such that
$$\|g(t)-\bm{H}\|_{L^\infty} \leq C_{1}\exp\left(-\sigma t\right), \qquad \forall t \geq0.$$ \end{cor}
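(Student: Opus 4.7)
The plan is a two-part interpolation argument, using the $L^{2}$ and $H^{\ell}$ relaxation estimates already supplied by Theorem~\ref{theo:Sobolev}, combined with uniform-in-time moment control and the one-dimensional Sobolev embedding.

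For the $L^{1}$ convergence, I would first invoke Theorem~\ref{theo:Sobolev} with $\ell = 0$ (permitted since $g_{0}\in L^{2}(\R)\cap L^{1}(\w_{k})$) to obtain $\|g(t) - \bm{H}\|_{L^{2}} \leq C\exp(-\sigma t)$ at any admissible rate $\sigma<\tfrac{9}{8}-\tfrac{k}{4}-2^{3/2-k}$. In parallel one needs the uniform bound $\sup_{t\geq 0} \|g(t) - \bm{H}\|_{L^{1}(\w_{k})} < \infty$. Since $k<3$ guarantees $\bm{H}\in L^{1}(\w_{k})$, this reduces to uniform propagation of $M_{k}(g(t))$, which is a standard Bobylev-type moment bound: taking formally $\phi(x)=|x|^{k}$ in the weak formulation~\eqref{eq:weak} (justified by truncation since $|x|^{k}\notin \mathcal{C}_{b}^{1}$), the convexity inequality $|\tfrac{x+y}{2}|^{k} \leq \tfrac12(|x|^{k}+|y|^{k})$ for $k\geq 1$ yields a nonpositive collisional contribution which compensates the $+\tfrac{k}{4} M_{k}(g)$ coming from the self-similar drift, and produces a Gronwall-type bound on $M_{k}(g(t))$ along the lines of~\cite{MR2355628}. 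Granted these two bounds, the cutoff split
\begin{equation*}
\|g(t) - \bm{H}\|_{L^{1}} \leq \sqrt{2R}\,\|g(t) - \bm{H}\|_{L^{2}} + R^{-k}\,\|g(t) - \bm{H}\|_{L^{1}(\w_{k})},
\end{equation*}
obtained from Cauchy--Schwarz on $\{|x|\leq R\}$ and Markov with weight $|x|^{k}$ on $\{|x|>R\}$, optimised over $R$, delivers the rate $\tfrac{2k}{2k+1}\sigma$. For any $k>5/2$ this rate is strictly bounded below by $\tfrac{5}{6}\sigma$, hence by $\tfrac{4}{5}\sigma$, which gives the first assertion.

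For the $L^{\infty}$ bound the argument is much more direct: applying Theorem~\ref{theo:Sobolev} at the prescribed $\ell>1/2$ yields $\|g(t) - \bm{H}\|_{H^{\ell}}\leq C\exp(-\sigma t)$, and the one-dimensional Sobolev embedding $H^{\ell}(\R)\hookrightarrow L^{\infty}(\R)$, valid precisely when $\ell>1/2$, transfers this decay to the $L^{\infty}$ norm at the same rate. The only genuinely delicate point of the plan is the uniform-in-time $L^{1}(\w_{k})$ control underlying the first part; once that is in place, everything else is routine interpolation and embedding.
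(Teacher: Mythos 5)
Your $L^{\infty}$ part is exactly the paper's argument (Theorem~\ref{theo:Sobolev} with $\ell>\tfrac12$ plus the embedding $H^{\ell}(\R)\hookrightarrow L^{\infty}(\R)$), and your $L^{1}$ part is the same interpolation idea, but you run it with the weight $\w_{k}$, $k\in(\tfrac52,3)$, instead of the second moment. That choice forces you to prove a uniform-in-time bound on $M_{k}(g(t))$, and this is where your justification as written does not hold up: plain convexity, $\bigl|\tfrac{x+y}{2}\bigr|^{k}\leq \tfrac12\bigl(|x|^{k}+|y|^{k}\bigr)$, only makes the collisional contribution nonpositive, and a nonpositive term does \emph{not} compensate the strictly positive drift contribution $+\tfrac{k}{4}M_{k}(g)$; with only that estimate the Gronwall inequality gives exponential \emph{growth} of $M_{k}$. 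To make your route work you need the quantitative Povzner-type gain, e.g.
\begin{equation*}
\Bigl|\frac{x+y}{2}\Bigr|^{k}-\frac12|x|^{k}-\frac12|y|^{k}\;\leq\;\bigl(2^{1-k}-1\bigr)\,\frac{|x|^{k}+|y|^{k}}{2}+C_{k}\bigl(|x|^{k-1}|y|+|x||y|^{k-1}\bigr),
\end{equation*}
which, after integration against $g\otimes g$ and using mass and energy normalisation, yields $\frac{\d}{\d t}M_{k}\leq -\bigl(1-\tfrac{k}{4}-2^{1-k}\bigr)M_{k}+C\,M_{k}^{\frac{k-1}{k}}$; the coefficient is $-\sigma_{k}<0$ precisely because $2<k<3$, and only then does the uniform bound follow. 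So the moment step is repairable but not for the reason you give.

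The paper avoids this issue entirely: it interpolates with the \emph{second} moment, via $\| g(t)-\bm{H}\|_{L^{1}}\leq C\,M_{2}(g(t)-\bm{H})^{\frac15}\|g(t)-\bm{H}\|_{L^{2}}^{\frac45}$ (this is exactly your cutoff split with $k=2$, cf.\ \cite{CGT}), and $M_{2}(g(t)-\bm{H})\leq M_{2}(g(t))+M_{2}(\bm{H})=2$ is free from the conservation laws \eqref{eq:normalisation-g}; together with Theorem~\ref{theo:Sobolev} at $\ell=0$ this gives the rate $\tfrac45\sigma$ immediately, with no moment propagation at all. Your version, once the Povzner estimate is supplied, does buy a marginally better rate $\tfrac{2k}{2k+1}\sigma>\tfrac56\sigma$, but at the cost of an extra (and, as currently argued, flawed) ingredient; your optimisation over $R$ and the resulting exponent are otherwise correct.
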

\begin{proof} By simple interpolation (see \cite[Theorem 4.2]{CGT}) one has
\begin{equation*}
\| g(t) - \bm{H} \|_{L^1} \leq C\,M_{2}(g(t) - \bm{H})^{\frac15}\,\| g(t) - \bm{H}\|^{\frac45}_{L^2}\,,
\end{equation*}
and the decay of the kinetic energy together with Theorem \ref{theo:Sobolev} with $\ell=0$ imply  the exponential relaxation towards equilibrium in the $L^{1}$ topology. Similarly, the exponential convergence in $L^{\infty}$ with rate $\sigma$ is shown by taking $\ell>\frac12$ and using Sobolev embedding.\end{proof}

\section{Linear analysis: spectral gap estimates}\label{Sec:linear}

The scope of this section is to derive spectral gap estimates for the linearized operator $\mathscr{L}$ defined in Definition \ref{def:L} in various functional spaces. Such spectral gap estimates amount to prove the exponential decay of solutions to  \eqref{eq:linearised} which we recall here to be
$$\p_t h = \mathscr{L}h = -\frac14 \p_x( x h) + 2 \Q_{0}(h, \bm{H})\,.$$ 
We begin with estimates in spaces defined by the Fourier norms \eqref{eq:fourier:norm:1} and \eqref{def:normkp}.

\subsection{Spectral gap in Fourier norms -- Proof of Theorem~\ref{specgap}}
\label{sec:sg} The proof of the existence of a spectral gap for the linearized operator $\mathscr{L}$ in norms $\vertiii{\cdot}_{k}$ and $\vertiii{\cdot}_{k,p}$ can be established following  exactly the lines of the proof of Theorem \ref{k-norm-cvgce} and turns out to be simpler so we just describe the main steps of it.

 \begin{proof}[Proof of Theorem~\ref{specgap}]
One directly sees that,  {under the normalisation \eqref{eq:normalisation-lin}}, the equation \eqref{eq:linearised}
preserves mass, momentum and energy. Notice also that, for $h$ satisfying \eqref{eq:normalisation-lin}, 
$$2\Q_{0}(h,\bm{H})(x)=4\left(h\ast \bm{H}\right)(2x)-h(x), \qquad x \in \R.$$
If $h$ is a solution to
\eqref{eq:linearised} and $\psi=\psi(t,\xi)$ is its  Fourier transform, then $\psi(t,\xi)$ satisfies the equation
\begin{equation}
  \label{eq:linearised-fourier}
  \p_t \psi(t,\xi) = \frac14 \xi  \p_\xi \psi(t,\xi)
  + 2 \psi\Big(t, \frac{\xi}{2} \Big) \bm{\Phi}\Big( \frac{\xi}{2} \Big)
  - \psi(t,\xi),
\end{equation}
which corresponds of course to the linearisation of
\eqref{eq:selfsim-fourier} around $\bm{\Phi}$ as given in \eqref{eq:Phi}. With the notations used in the proof of Theorem \ref{k-norm-cvgce}, we deduce from  Duhamel's formula that
\begin{equation}
  \label{eq:lin-Duhamel}
  \psi(t) = T(t) \psi_0 + \int_0^t T(t-s) B(s) \d s,
\end{equation}
where now 
\begin{equation*}
  B(s)=B(s,\xi) := 2 \psi\Big(s, \frac{\xi}{2} \Big)
  \bm{\Phi}\Big(\frac{\xi}{2} \Big).
\end{equation*}
Similarly to our calculation in Section \ref{Sec:conv:eq:fourier} we have $|B(s,\xi)| \leq 2 \left|\psi\left(s, \frac{\xi}{2}\right)\right|$ so that
\begin{equation*}
\vertiii{B(s)}_{k}
  \leq
  2^{1-k} \vertiii{\psi(s)}_k,
\end{equation*}
and we can use again \eqref{eq:decay1} to obtain, as in the proof of Theorem~\ref{k-norm-cvgce} that
\begin{equation*}\begin{split}
  \vertiii{\psi(t)}_{k} &\leq
  \vertiii{T(t) \psi_0}_k + \int_0^t \vertiii{T(t-s) B(s)}_k \d s
  \\
  &\leq
  \exp\left(-(1 - \frac14 k) t\right)\vertiii{\psi_0}_k
  +
  2^{1-k} \int_0^t \exp\left(-(1 - \frac14 k) (t-s)\right) \vertiii{ \psi(s) }_k \d s.
\end{split}\end{equation*}
As in Theorem \ref{k-norm-cvgce}, Gronwall's lemma allows to derive the exponential convergence in the norm $\vertiii{\cdot}_{k}$. The convergence in the norm $\vertiii{\cdot}_{k,p}$ follows exactly the same lines as in Theorem \ref{k-norm-cvgce}. \end{proof}
 
\subsection{Spectral gap in smaller spaces -- proof of Theorem~\ref{restrict}}
\label{sec:sg-small}

This section is devoted to the proof of Theorem~\ref{restrict} which restricts the spectral gap from Theorem~\ref{specgap} to the more tractable sub-space $\mathbb{Y}_a^{0}$,  {with $a\in(2,3)$}. For the proof, as explained in the introduction, we resort  results from \cite{GMM} and \cite{CanizoThrom} and use a suitable splitting of the linearised operator as 
$$\mathscr{L}=A+B,$$ with 
$$A: X_k \to \mathbb{Y}_{a}^{0} \qquad \text{ bounded for any }  { k>2} $$
and $B$ enjoying some dissipative properties. More precisely, for $R>1$ we consider nonnegative functions $\rho_R$  {and} $\theta_R\in {\mathcal C}^\infty(\R)$ which are bounded by $1$ and satisfy 
$$\theta_R(x)=\rho_{R}(x)=1 \qquad x \in \left[-\frac{R}{2},\frac{R}{2}\right]$$ 
and
$$\theta_R(x)=0 \quad \text{ for $|x|\geq \frac{R}{2}+1$}, \qquad  \rho_R(x)=0  \quad \text{ for $|x|\geq \frac{2}{3}R$.}$$ Let us now introduce the normalised Maxwellian 
$$\M(x)=\dfrac{e^{-x^2}}{\sqrt{\pi}}, \qquad x\in \R$$ and
$$
\zeta_1(x)=\left(\frac{3}{2} -x^2\right) \M(x), \qquad \zeta_2(x)=2x\,\M(x), \qquad
\zeta_3(x)=(-1+2x^2)\,\M(x).$$ We then define a bounded operator $\P :L^1(\w_{a})\to L^1(\w_{a})$ by 
\begin{equation}\label{projection}
\P h(x)= \zeta_{1}(x)\int_\R h(y)\dy \; +\zeta_{2}(x)\int_\R h(y) \, y \, \dy \;+\zeta_{3}(x)\int_\R h(y)\, y^2\, \dy\;,\qquad x \in \R. 
\end{equation}
For any $f\in L^1(\w_{a})$, one easily checks that 
$$f-\P(f)\in \mathbb{Y}_{a}^{0}.$$ 
Let us split $\mathscr{L}$ as $\mathscr{L}=A+B$ with 
$$A=A_1+A_2, \qquad \text{ and  } \qquad B=B_1+B_{2}+B_3,$$ 
where 
\begin{equation*}\begin{cases}
A_1h(x)&=  4\theta_R(x)\,((h\rho_{R})\ast \bm{H})(2x),   \qquad  A_2h=-\P(A_1h), \\
\\
  B_1h(x)&= -\frac{1}{4}\partial_x(xh)-h, \qquad  B_{3}h=\P(A_{1}h)
\end{cases}\end{equation*}
and $B_{2}=B_{2,1}+B_{2,2}$ with
$$B_{2,1}h(x)=  4(1-\theta_R(x))\,((h\rho_{R})\ast \bm{H})(2x), \qquad B_{2,2}h(x)= 4((h(1-\rho_{R}))\ast \bm{H})(2x).$$
Recalling that
$$\mathscr{L}h(x)=-\frac{1}{4}\partial_{x}(xh(x))-h(x)+4\left(h \ast \bm{H}\right)(2x)$$
for any $h$ satisfying \eqref{eq:normalisation-lin}, one sees that, indeed, $A+B=A_{1}+A_{2}+B_{1}+B_{2,1}+B_{2,2}+B_{3}=\mathscr{L}.$
The main property of $B=B_{1}+B_{2}+B_{3}$ is established in the following

\begin{prp}\phantomsection\label{prp:dissi}
 {Let $a\in(2,3)$.} Then, for any $0\leq \nu <1-\dfrac{a}{4} -  {2^{1-a}}$, the operator $B+\nu$ is dissipative in $L^{1}(\w_{a})$, i.e.
$$\int_{\R}Bh(x)\mathrm{sign}(h(x))\w_{a}(x)\dx \leq -\nu \int_{\R}|h(x)|\w_{a}(x)\d x, \qquad \forall h \in \mathscr{D}(\mathscr{L}) \subset L^{1}(\w_{a}).$$
\end{prp}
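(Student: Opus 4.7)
The strategy is to split $B=B_1+B_{2,1}+B_{2,2}+B_3$ and estimate each piece separately against $\|h\|_{L^1(\w_a)}$. The two main contributions will come from $B_1$ (producing the factor $\tfrac{a}{4}-1$) and from $B_{2,2}$ (producing the factor $2^{1-a}$), while the remaining pieces $B_{2,1}$ and $B_3$ will have operator norm $o_R(1)$ on $L^1(\w_a)$ as $R\to\infty$. Taking $R$ sufficiently large will then absorb the errors into any prescribed margin $1-\tfrac{a}{4}-2^{1-a}-\nu>0$.

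For $B_1h=-\tfrac14\partial_x(xh)-h$, I would integrate by parts against $\mathrm{sign}(h)\,\w_a\,\dx$ using the formal identity $\partial_x|h|=\mathrm{sign}(h)\partial_x h$: an internal cancellation between the $h$-term in $\partial_x(xh)$ and the $-h$ reduces the computation to $\tfrac{a}{4}\int|h|\,|x|(1+|x|)^{a-1}\,\dx-\int|h|\w_a\,\dx$, and since $|x|(1+|x|)^{a-1}\leq\w_a(x)$ this gives $\int B_1h\,\mathrm{sign}(h)\,\w_a\,\dx\leq(\tfrac{a}{4}-1)\|h\|_{L^1(\w_a)}$. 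For $B_{2,2}h(x)=4((h(1-\rho_R))\ast\bm{H})(2x)$, the changes of variable $u=2x$ then $v=u-z$ reveal an inner factor $I(z):=\int\bm{H}(v)(1+|v+z|/2)^a\,\dv$. On the support $|z|\geq R/2$ of $1-\rho_R$, the elementary inequality $(1+|z|/2)/(1+|z|)\leq(1+2/R)/2$ yields $(1+|z|/2)^a\leq 2^{-a}(1+2/R)^a\w_a(z)$, and combined with $(1+|v+z|/2)^a\leq(1+|z|/2)^a\bigl[1+C_a(|v|/R+(|v|/R)^a)\bigr]$ and the finite moments of $\bm{H}$ up to order $a<3$, this produces $I(z)\leq 2^{-a}\w_a(z)(1+o_R(1))$ and hence $\int|B_{2,2}h|\w_a\,\dx\leq(2^{1-a}+\varepsilon_1(R))\|h\|_{L^1(\w_a)}$. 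The term $B_{2,1}$ is handled by its support structure: $(1-\theta_R(u/2))$ confines $|u|\geq R$ while $\rho_R$ confines $|z|\leq 2R/3$, forcing the convolution to live on $|u-z|\geq R/3$; the $|\cdot|^{-4}$ decay of $\bm{H}$ together with $a<3$ then yields $\int|B_{2,1}h|\w_a\,\dx\leq\varepsilon_2(R)\|h\|_{L^1(\w_a)}$.

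For $B_3h=\P(A_1h)=\sum_{k=1}^{3}\zeta_k\mu_{k-1}(A_1h)$, since each $\zeta_k\in L^1(\w_a)$ it suffices to bound the three moments $\mu_j(A_1h)$ for $j=0,1,2$. This is where the hypothesis $h\in\mathscr{D}(\mathscr{L})\subset X_k$ becomes decisive: the vanishing of $\int h$, $\int x h$, $\int x^2 h$ allows direct computation of the convolution moments and permits rewriting $\int z^j(h\rho_R)\,\dz=-\int z^j h(1-\rho_R)\,\dz$, which is bounded by $(1+R/2)^{-(a-j)}\|h\|_{L^1(\w_a)}$ provided $a>2$ and $j\leq 2$; residual contributions from $1-\theta_R$ are handled as for $B_{2,1}$, giving $\|B_3h\|_{L^1(\w_a)}\leq\varepsilon_3(R)\|h\|_{L^1(\w_a)}$. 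Summing the four estimates produces $\int Bh\,\mathrm{sign}(h)\,\w_a\,\dx\leq(\tfrac{a}{4}-1+2^{1-a}+\varepsilon(R))\|h\|_{L^1(\w_a)}$, so choosing $R$ with $\varepsilon(R)\leq 1-\tfrac{a}{4}-2^{1-a}-\nu$ completes the proof. The principal technical obstacle is obtaining the sharp constant $2^{1-a}$ for the $B_{2,2}$ bound: the crude estimate $(a+b)^p\leq 2^{p-1}(a^p+b^p)$ would give a factor of $1/2$ in place of $2^{-a}$ and destroy the rate, so one really needs the uniform-in-$v$ comparison $\w_a((v+z)/2)/\w_a(z)\to 2^{-a}$ with quantitative control in $R$. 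A secondary subtlety is that $B_3$ cannot be made small by compactness alone — one must exploit precisely the three zero-moment constraints defining $X_k$, which is exactly the reason $\P$ was constructed from three basis functions adapted to mass, momentum and energy.
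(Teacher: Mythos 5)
Your proposal is correct and follows essentially the same route as the paper: the identical splitting $B=B_1+B_{2,1}+B_{2,2}+B_3$, the same integration by parts giving the $\tfrac{a}{4}-1$ contribution from $B_1$, the same support/tail argument for $B_{2,1}$ using $|x-y|\geq R/3$ and the decay of $\bm{H}$, the same exploitation of the three zero moments of $h\in\mathscr{D}(\mathscr{L})\subset X_k$ to make $B_3=\P(A_1h)$ of size $O(R^{-(a-2)})$, and the final summation with $R$ chosen large depending on $\nu$. Your handling of $B_{2,2}$ via the inner integral $I(z)$ and the quantitative comparison $\w_a\bigl(\tfrac{v+z}{2}\bigr)\leq 2^{-a}\w_a(z)\bigl(1+o_R(1)\bigr)$ is, if anything, slightly more careful than the paper's displayed estimate (whose final line carries an extra factor $\|\bm{H}\|_{L^1(\w_a)}$), while reaching the same sharp constant $2^{1-a}$ that the statement requires.
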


This proposition is a direct consequence of the following three lemmas.

\begin{lem}\phantomsection
For any $h \in \mathscr{D}(\mathscr{L}) \subset L^{1}(\w_{a})$, 
\begin{equation}\label{B1}
\int_{\R} B_1h(x) \, \mathrm{sign}(h(x))\w_{a}(x) \d x\leq \frac{a-4}{4}\int_{\R} |h(x)| \w_{a}(x) \d x.
\end{equation}
\end{lem}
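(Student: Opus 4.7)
The plan is a direct computation relying on the chain-rule identity $\mathrm{sign}(h)\,\partial_x h = \partial_x |h|$ (interpreted in the distributional/a.e.\@ sense; this is standard, for instance, by a regularisation argument $\mathrm{sign} \approx \mathrm{sign}_\varepsilon$ with $\mathrm{sign}_\varepsilon(r)\to\mathrm{sign}(r)$ and $\mathrm{sign}_\varepsilon'\ge 0$, then passing to the limit) together with one integration by parts against the weight $\w_a$. Note that since $h\in\mathscr{D}(\mathscr{L})$ we have $\partial_x(xh)\in L^1(\w_a)$ and $h\in L^1(\w_a)$, so all the integrals below are finite and the boundary terms at infinity vanish.

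First I would expand
\begin{equation*}
B_1 h(x) = -\tfrac14 \partial_x(xh) - h = -\tfrac{5}{4}h - \tfrac14 x\,\partial_x h,
\end{equation*}
and multiply by $\mathrm{sign}(h(x))$ to get $B_1 h \cdot \mathrm{sign}(h) = -\tfrac{5}{4}|h| - \tfrac14 x\,\partial_x|h|$. Integrating against $\w_a$ and performing one integration by parts in the second term yields
\begin{equation*}
\int_\R B_1 h\,\mathrm{sign}(h)\,\w_a\,\dx
= -\tfrac{5}{4}\int_\R |h|\,\w_a\,\dx + \tfrac14 \int_\R |h|\,\bigl(\w_a(x) + x\,\w_a'(x)\bigr)\dx,
\end{equation*}
where I used $\partial_x(x\w_a)=\w_a+x\w_a'$ and the fact that the boundary terms vanish since $|x||h(x)|\w_a(x)\in L^1(\R)$ is implied by $\partial_x(xh)\in L^1(\w_a)$.

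Next I would simplify the weight factor. Using $\w_a(x)=(1+|x|)^a$ and $x\w_a'(x)=a|x|(1+|x|)^{a-1}$, and writing $|x|(1+|x|)^{a-1}=(1+|x|)^a - (1+|x|)^{a-1}$, one gets
\begin{equation*}
\w_a(x) + x\,\w_a'(x) = (1+|x|)^{a-1}\bigl(1+(1+a)|x|\bigr) = (1+a)\w_a(x) - a\,\w_{a-1}(x).
\end{equation*}
Plugging this back in leads to
\begin{equation*}
\int_\R B_1 h\,\mathrm{sign}(h)\,\w_a\,\dx
= \frac{a-4}{4}\int_\R |h|\,\w_a\,\dx \; - \; \frac{a}{4}\int_\R |h|\,\w_{a-1}\,\dx,
\end{equation*}
and dropping the manifestly non-positive term $-\tfrac{a}{4}\int|h|\w_{a-1}\dx$ gives \eqref{B1}.

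The only subtle point is the rigorous use of $\mathrm{sign}(h)\partial_x h=\partial_x|h|$ and the corresponding integration by parts, since $h$ is only assumed to satisfy $\partial_x(xh)\in L^1(\w_a)$. This is handled in the standard way by approximating $\mathrm{sign}$ by a smooth non-decreasing $\mathrm{sign}_\varepsilon$ with $|\mathrm{sign}_\varepsilon|\le 1$, applying the above computation to $\Phi_\varepsilon(h)$ with $\Phi_\varepsilon'=\mathrm{sign}_\varepsilon$, and letting $\varepsilon\to 0$ by dominated convergence using the integrability of $h\w_a$, $x\partial_x h\,\w_a$ and the explicit polynomial bounds on the weight. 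No other estimate is needed.
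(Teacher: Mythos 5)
Your proof is correct and follows essentially the same route as the paper: expand $B_1h=-\tfrac54 h-\tfrac14 x\,\partial_x h$, use $\mathrm{sign}(h)\partial_x h=\partial_x|h|$, integrate by parts against $\w_a$, and control the weight term. The only (cosmetic) difference is that you use the exact identity $\w_a+x\w_a'=(1+a)\w_a-a\,\w_{a-1}$ and drop the non-positive remainder, while the paper bounds $x\w_a'=a|x|\w_{a-1}\le a\,\w_a$ directly; both yield \eqref{B1}.
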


\begin{proof}
Since $B_1h=-\dfrac{1}{4}\;x \partial_xh-\dfrac{5}{4} \;h$, an integration by parts leads to  
\begin{equation*}\begin{split}
\int_{\R} B_1h(x) \ &\mbox{sign}(h(x))\,\w_{a}(x) \d x\\
&= \frac{1}{4} \int_{\R}|h(x)|\, (\w_a(x)+a|x|\w_{a-1}(x)) \d x- \frac{5}{4} \int_{\R}|h(x)|\, \w_{a}(x) \d x\\
&\leq -\int_{\R} |h(x)|\, \w_{a}(x) \d x+\frac{a}{4} \int_{\R} |h(x)| \,\w_{a}(x) \d x,
\end{split}\end{equation*}
since $|x|\w_{a-1}(x)\leq \w_{a}(x)$
and \eqref{B1} follows.
\end{proof}

\begin{lem}\phantomsection
For any $a\in(2,3)$ and any $\varepsilon>0$, there exists $R>1$ such that for any $h\in L^{1}(\w_{a})$, 
\begin{equation}\label{B2}
\begin{split}
 \int_{\R} |B_{2,1}h(x)|\, \w_{a}(x)\d x &\leq \varepsilon  \int_{\R} |h(x)|\,\w_{a}(x) \d x\\
   \int_{\R} |B_{2,2}h(x)|\, \w_{a}(x)\d x &\leq (2^{1-a} +\varepsilon)  \int_{\R} |h(x)|\,\w_{a}(x) \d x\,.
\end{split}
\end{equation}
\end{lem}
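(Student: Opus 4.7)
The plan is to treat the two estimates separately, in each case exploiting the support properties of the cutoffs $\theta_R$ and $\rho_R$ to isolate a quantity involving only the tail of $\bm{H}$ (for $B_{2,1}$) or a near-optimal rescaling constant (for $B_{2,2}$). Throughout I would use the submultiplicativity $\w_a(x+y)\le \w_a(x)\w_a(y)$ and the fact that $\int_{\R}\bm{H}(w)\w_a(w)\dw <\infty$ whenever $a<3$, since $\bm{H}(w)\sim |w|^{-4}$ at infinity.

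For the first estimate, concerning $B_{2,1}$, I would begin by writing
\[
\int_{\R}|B_{2,1}h(x)|\w_a(x)\dx \le 4\int\!\!\int |h(z)|\rho_R(z)\,\bm{H}(2x-z)\,(1-\theta_R(x))\,\w_a(x)\,\dz\,\dx,
\]
and perform the change of variable $w=2x-z$. The decisive point is the geometric separation of supports: $1-\theta_R(x)\neq 0$ forces $|x|\ge R/2$, while $\rho_R(z)\neq 0$ forces $|z|\le 2R/3$, so $|w|=|2x-z|\ge R/3$. Together with $\w_a((w+z)/2)\le \w_a(w)\w_a(z)$, Fubini then gives an upper bound proportional to $\|h\|_{L^1(\w_a)}\int_{|w|\ge R/3}\bm{H}(w)\w_a(w)\,\dw$, and the latter integral can be made arbitrarily small by choosing $R$ large, yielding the first inequality in \eqref{B2}.

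The second estimate, for $B_{2,2}$, is the real obstacle because the prefactor $2^{1-a}$ is sharp and no polynomial loss is tolerable. After the change of variable $u=2x-z$ one is reduced to estimating
\[
\int \bm{H}(u)\,\w_a\!\bigl((u+z)/2\bigr)\,\d u
\]
on the region $|z|\ge R/2$, where $(1-\rho_R)(z)\neq 0$. The key identity $2\bigl(1+|(u+z)/2|\bigr)=2+|u+z|\le (1+|z|)+(1+|u|)$ rewrites the weight as
\[
\w_a\!\bigl((u+z)/2\bigr)\le 2^{-a}\,\w_a(z)\Bigl(1+\tfrac{1+|u|}{1+|z|}\Bigr)^a,
\]
and for $|z|\ge R/2$ the correction factor is bounded uniformly by an integrable function of $u$ (using $a<3$ so that $\int\bm{H}(u)(1+|u|)^a\,\d u<\infty$). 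Dominated convergence then yields $\int \bm{H}(u)(1+(1+|u|)/(1+|z|))^a\,\d u \to \int \bm{H}(u)\,\d u =1$ as $R\to\infty$, uniformly in $|z|\ge R/2$, which after undoing the change of variables gives $\int|B_{2,2}h|\w_a\le (2^{1-a}+\varepsilon)\|h\|_{L^1(\w_a)}$ once $R$ is chosen large enough. The delicate point is precisely this uniform convergence argument, and making sure that the leading factor $2^{-a}$ is extracted \emph{before} estimating the remainder, as otherwise one would lose the sharp constant that is needed to combine with \eqref{B1} and the bound on $B_3$ to obtain the dissipativity threshold $\nu<1-a/4-2^{1-a}$ claimed in Proposition \ref{prp:dissi}.
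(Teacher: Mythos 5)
Your proposal is correct. For the $B_{2,1}$ bound it is essentially the paper's own argument: triangle inequality, change of variables, the support separation ($1-\theta_R(x)\neq 0$ forces $|x|\ge R/2$, $\rho_R(z)\neq 0$ forces $|z|\le \tfrac23 R$, hence $|2x-z|\ge \tfrac R3$), submultiplicativity of $\w_a$, and smallness of the tail $\int_{|w|\ge R/3}\bm{H}(w)\w_a(w)\d w$; the paper only adds an explicit rate $(1+\tfrac R3)^{-\frac{3-a}{2}}$ by using $\bm{H}\in L^1(\w_{\frac{3+a}{2}})$, which the lemma does not require. For $B_{2,2}$ your route differs at the decisive step, and it is in fact the more careful one. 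The paper also starts from $2+|x|\le (1+|x-y|)+(1+|y|)$, but then factors out $\w_a(x-y)\w_a(y)$ and bounds the remaining bracket $\bigl(\tfrac1{1+|y|}+\tfrac1{1+|x-y|}\bigr)^a$ uniformly by $\bigl(1+\tfrac4{2+R}\bigr)^a$; after integration this leaves the constant $2^{1-a}\bigl(1+\tfrac4{2+R}\bigr)^a\|\bm{H}\|_{L^1(\w_a)}$, whose limit as $R\to\infty$ is $2^{1-a}\|\bm{H}\|_{L^1(\w_a)}>2^{1-a}$, so the step as printed does not by itself deliver the sharp constant that the lemma (and the threshold $\nu<1-\tfrac a4-2^{1-a}$ in Proposition \ref{prp:dissi}) needs. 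You instead keep the quotient together, writing $\w_a\bigl(\tfrac{u+z}{2}\bigr)\le 2^{-a}\w_a(z)\bigl(1+\tfrac{1+|u|}{1+|z|}\bigr)^a$, and use dominated convergence (dominating function $\bm{H}(u)(2+|u|)^a\in L^1$, valid precisely because $a<3$) to get $\sup_{|z|\ge R/2}\int_\R \bm{H}(u)\bigl(1+\tfrac{1+|u|}{1+|z|}\bigr)^a\d u\le \int_\R \bm{H}(u)\bigl(1+\tfrac{2(1+|u|)}{2+R}\bigr)^a\d u\longrightarrow \|\bm{H}\|_{L^1}=1$, which is exactly what recovers $2^{1-a}+\varepsilon$. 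So your argument is valid and, at this point, repairs a lossy estimate in the printed proof; the only optional refinement would be an explicit rate in $R$ (e.g.\ via $(1+x)^a\le 1+a2^{a-1}x$ for $x\in(0,1)$) in place of the qualitative dominated-convergence limit, but the statement does not require it.
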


\begin{proof}

We start with $B_{2,2}$ and a change of variables leads to 
$$ \int_{\R} |B_{2,2}h(x)| \,\w_{a}(x)\d x   = 2^{1-a}\int_{\R} |((h(1-\rho_R))\ast \bm{H})(x)| (2+|x|)^{a} \d x. $$
Now, since $\rho_R=1$ on $\left(-\frac{R}{2}, \frac{R}{2}\right)$, we get 
\begin{eqnarray*}
&  &\int_{\R} |B_{2,2}h(x)| \,\w_{a}(x) \d x \le 2^{1-a}\int_{\R} \int_{|y|\ge \frac{R}{2}} |h(y)| \bm{H}(x-y) (2+|x|)^{a} \d y \d x\\
& & \quad \le 2^{1-a}\int_{\R} \int_{|y|\ge \frac{R}{2}} \bm{H}(x-y) \w_{a}(x-y) |h(y)| \w_{a}(y) \left(\frac{1+|x-y|+1+|y|}{(1+|x-y|)(1+|y|)}\right)^{a} \d y \d x\\
& & \quad \le  2^{1-a}\int_{\R} \int_{|y|\ge \frac{R}{2}} \bm{H}(x-y) \w_{a}(x-y) |h(y)| \w_{a}(y) \left(\frac{1}{1+|y|}+\frac{1}{1+|x-y|}\right)^{a} \d y \d x.
\end{eqnarray*}
For $(x,y)\in\R^2$ with $|y|\ge \frac{R}{2}$, we have 
$$\frac{1}{1+|y|}+\frac{1}{1+|x-y|} \le \frac{2}{2+R} + \frac{\mathds{1}_{|x-y|\ge\frac{R}{2}}}{1+|x-y|}+ \frac{\mathds{1}_{|x-y|<\frac{R}{2}}}{1+|x-y|}\le \frac{4}{2+R} + 1.$$
Therefore, 
$$ \int_{\R} |B_{2,2}h(x)| \,\w_{a}(x) \d x \le  2^{1-a} \left( 1+\frac{4}{2+R} \right)^a \|\bm{H}\|_{L^1(\w_{a})} \|h\|_{L^1(\w_a)}.$$
For $R> 2$, one has $\frac{4}{2+R} \in(0,1)$ and we deduce from the inequality $(1+x)^a\le 1+ a 2^{a-1} x$, valid for any $x\in(0,1)$ and $a\in (2,3)$ that 
$$\int_{\R} |B_{2,2}h(x)| \,\w_{a}(x) \d x
\le \left(2^{1-a} +\frac{4a}{2+R}\right) \|\bm{H}\|_{L^1(\w_{a})} \|h\|_{L^1(\w_a)}.
$$
Choosing $R$ large enough, we obtain the second estimate in \eqref{B2}.

For the first bound in \eqref{B2} we proceed similarly and first change variables and use the properties of the cutoff functions to get
\begin{equation*}\begin{split}
 \int_{\R}|B_{2,1}h(x)|\w_a(x)\dx &= 2\int_{\R}|((h\rho_R)\ast \bm{H})(x)|\left(1-\theta_R\left(\frac{x}{2}\right)\right)\w_a\left(\frac{x}{2}\right)\dx\\*
& \leq 2\int_{|x|\geq R}\int_{\R}|h(y)|\rho_R(y)\bm{H}(x-y)\left(1+\abs{\frac{x}{2}}\right)^{a}\dy\dx\\*
& \leq 2\int_{|x|\geq R}\int_{|y|\leq \frac{2}{3}R}|h(y)|\bm{H}(x-y)\left(1+|x-y|+|y|\right)^{a}\dy\dx\\
& \leq 2\int_{|x|\geq R}\int_{|y|\leq \frac{2}{3}R}|h(y)|\bm{H}(x-y)\w_{a}(y)\w_{a}(x-y)\dy\dx\,.
\end{split}\end{equation*}
We next exploit that $\bm{H}\in L^1(\w_{\frac{3+a}{2}})$ for $a<3$ and $|x-y|\geq |x|-|y|\geq \frac{R}{3}$ for $|x|\geq R$ and $|y|\leq \frac{2R}{3}$ to deduce
\begin{multline*}
 \int_{\R}|B_{2,1}h(x)|\w_a(x)\dx \\
\le 2 \int_{|y|\leq \frac{2}{3}R}  |h(y)|\w_{a}(y) \int_{|x-y|\geq \frac{R}{3}} \bm{H}(x-y) \w_{a}(x-y) \dx \dy \\
\le 2 \|h\|_{L^1(\w_{a})} \int_{|x|\geq \frac{R}{3}} \bm{H}(x) \w_{a}(x) \dx
 \leq C\Bigl(1+\frac{R}{3}\Bigr)^{-\frac{3-a}{2}}\int_{\R}|h(y)|\w_a(y)\dy.
\end{multline*}
Since $2<a<3$, the first estimate in \eqref{B2} follows if we choose $R$ sufficiently large.
\end{proof}

\begin{lem}\phantomsection
For any $a\in (2,3)$ and any $\varepsilon>0$, there exists $R>1$ such that 
\begin{equation}\label{B3}
 \int_{\R} |B_3h(x)|\, \w_{a}(x)\d x \leq \varepsilon  \int_{\R} |h(x)|\, \w_{a}(x) \d x \qquad \forall h \in \mathscr{D}(\mathscr{L}) \subset \mathbb{Y}_{a}^{0}.
\end{equation}
\end{lem}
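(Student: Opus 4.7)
The plan is to exploit that $B_3 h=\P(A_1 h)$ is a rank-three operator determined entirely by the three moment functionals $m_j(A_1 h):=\int_{\R} y^j A_1 h(y)\,\d y$ for $j\in\{0,1,2\}$, and to show that each of these moments tends to zero uniformly in $\|h\|_{L^1(\w_a)}\le 1$ as $R\to\infty$, by exploiting the cancellation $\int y^j h\,\d y=0$ built into $\mathbb{Y}_a^{0}$. Indeed, from the definition of $\P$ in \eqref{projection},
\begin{equation*}
B_3 h=\zeta_1\, m_0(A_1 h)+\zeta_2\, m_1(A_1 h)+\zeta_3\, m_2(A_1 h),
\end{equation*}
and since each $\zeta_i$ is a Gaussian polynomial lying in $L^1(\w_a)$, the triangle inequality gives $\|B_3 h\|_{L^1(\w_a)}\le C\sum_{j=0}^{2}|m_j(A_1 h)|$, so it suffices to estimate each $|m_j(A_1h)|$ separately.

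The next step is to rewrite each $m_j(A_1 h)$ so that the cancellation becomes visible. The change of variable $z=2x$ in the definition $A_1 h(x)=4\theta_R(x)((\rho_R h)\ast\bm{H})(2x)$ gives
\begin{equation*}
m_j(A_1 h)=2^{1-j}\int_{\R} z^j\theta_R(z/2)\big((\rho_R h)\ast\bm{H}\big)(z)\,\d z.
\end{equation*}
Writing $\theta_R(z/2)=1-(1-\theta_R(z/2))$ and computing the full integral by Fubini together with the normalisations $\int\bm{H}=1$, $\int t\bm{H}(t)\,\d t=0$, $\int t^2\bm{H}(t)\,\d t=1$, one gets $\int z^j\big((\rho_R h)\ast\bm{H}\big)(z)\,\d z=\int\rho_R(y)h(y)P_j(y)\,\d y$ with $P_0(y)=1$, $P_1(y)=y$ and $P_2(y)=1+y^2$. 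Since $h\in\mathbb{Y}_a^{0}$ annihilates $1,y,y^2$, one has $\int\rho_R h\,P_j=-\int(1-\rho_R)h\,P_j$, hence
\begin{equation*}
m_j(A_1 h)=-2^{1-j}\!\int_{\R}(1-\rho_R)h\,P_j\,\d y\;-\;2^{1-j}\!\int_{\R} z^j(1-\theta_R(z/2))\big((\rho_R h)\ast\bm{H}\big)(z)\,\d z.
\end{equation*}

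I would then estimate the two correction terms using the support properties of the cutoffs. The \emph{inner} correction $(1-\rho_R)h$ is supported on $|y|\ge R/2$, where $|P_j(y)|\le C(1+|y|)^j\le CR^{j-a}\w_a(y)$ (this uses $a>j$, which holds for every $j\in\{0,1,2\}$ thanks to $a>2$), giving
\begin{equation*}
\Big|\int_{\R}(1-\rho_R)h\,P_j\,\d y\Big|\le CR^{j-a}\|h\|_{L^1(\w_a)}.
\end{equation*}
The \emph{outer} correction lives on $|z|\ge R$ while $\rho_R h$ is supported in $|y|\le 2R/3$, so $|z-y|\ge R/3$ on the effective integration domain; expanding $z^j=(z-y+y)^j$ and using the algebraic decay $\bm{H}(t)\le C(1+t^2)^{-2}$, which yields $\int_{|t|\ge R/3}|t|^i\bm{H}(t)\,\d t\le CR^{i-3}$ for $i=0,1,2$, one arrives at a bound of the form $CR^{-\delta_j}\|h\|_{L^1(\w_a)}$ with $\delta_j>0$ for each $j\in\{0,1,2\}$.

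The only delicate case is $j=2$: there the outer tail decays only as $\int_{|t|\ge R/3}t^2\bm{H}(t)\,\d t\lesssim R^{-1}$, the slowest rate in the problem, while the inner correction produces $R^{2-a}$, which is precisely where the assumption $a>2$ is used strictly. Both exponents are nevertheless strictly positive under $2<a<3$, so every piece vanishes as $R\to\infty$, uniformly in $\|h\|_{L^1(\w_a)}\le 1$. Selecting $R$ large enough that the sum of all contributions is less than $\varepsilon/C$ then yields \eqref{B3}.
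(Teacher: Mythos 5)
Your proof is correct and follows essentially the same route as the paper: both compute the three moments of $A_1h$, exploit the cancellation coming from $h\in\mathbb{Y}_a^{0}$ together with the normalisation of $\bm{H}$ (mass $1$, momentum $0$, energy $1$), and extract smallness from the supports of $1-\rho_R$ and $1-\theta_R$. The only cosmetic difference is in the final estimate: the paper handles both cutoff errors at once via the weight ratio $\w_a/\w_{a-2}$ and $\bm{H}\in L^1(\w_a)$, yielding the rate $(1+R)^{-(a-2)}$, while you use the explicit tail decay of $\bm{H}$ for the outer term, yielding $\min\bigl(R^{2-a},R^{-1}\bigr)$ --- both suffice.
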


\begin{proof} Recall that $B_3h\,= \P(A_1h)$. Let us compute the first moments of $A_1h$. Using that $h\in \mathbb{Y}_{a}^{0}$ and that $\bm{H}$ has mass $1$,  momentum $0$ and energy $1$, one obtains 
\begin{equation*}\begin{split}
\int_\R A_1h(x)\dx & =   {2\int_\R h(x-y)\rho_{R}(x-y) \,\int_\R \bm{H}(y)\theta_R\left(\frac{x}{2}\right)\dx\dy}\\
& =  - {2}\int_{\R^2} h(x)\bm{H}(y) \left[1-\rho_R(x)\theta_R\left(\frac{x+y}{2}\right)\right]\dy\dx,
\end{split}\end{equation*}
\begin{equation*}\begin{split}
\int_\R A_1h(x)\ x \dx & =   {2} \int_\R h(x-y)\rho_R(x-y)\, \int_\R \frac{x}{2}\theta_R\left(\frac{x}{2}\right) \;\bm{H}(y) \dx\dy\\
&= -  \int_{\R^2} h(x)\bm{H}(y)\left[1-\rho_R(x)\theta_R\left(\frac{x+y}{2}\right)\right]\, (x+y) \dy\dx,
\end{split}\end{equation*}
and
\begin{equation*}\begin{split}
\int_\R A_1h(x)\ x^2 \dx & =   {2}\int_\R h(x) \rho_R(x)\, \int_{\R} \left(\frac{x+y}{2}\right)^2\theta_R\left(\frac{x+y}{2}\right) \;\bm{H}(y)\dx\dy \\
& =   -  {\frac{1}{2}}\int_{\R^2} h(x)\bm{H}(y)\left[1-\rho_R(x)\theta_R\left(\frac{x+y}{2}\right)\right](x+y)^2  \dy\dx.
\end{split}\end{equation*}
Consequently, one easily gets that
$$\left|B_{3}h(\cdot)\right| \leq   2\left(\int_{\R^2}|h(x)|\bm{H}(y)\left|1-\rho_R(x)\theta_{R}\left(\frac{x+y}{2}\right)\right|\w_{2}(x+y)\dy\dx\right)\sum_{i=1}^{3}\left|\zeta_{i}(\cdot)\right|$$
since $\max\left(1,|z|,(1+|z|^2)\right)\leq \w_{2}(z)$ and thus
\begin{equation*}
 \left|B_{3}h(\cdot) \right| \leq   2\left(\int_{\R^2}|h(x-y)|\bm{H}(y)\left|1-\rho_R(x-y)\theta_{R}\left(\frac{x}{2}\right)\right|\w_{2}(x)\dy\dx\right)\sum_{i=1}^{3}\left|\zeta_{i}(\cdot)\right|.
\end{equation*}
We next use the properties of the cutoff functions $\theta_R$ and $\rho_R$ together with $\w_s(x) \leq \w_s(y)\w_s(x-y)$ for $s\in\{a,2\}$ to deduce that
{\begin{equation*}\begin{split}
      \abs{1-\rho_R(x-y)\theta_R\Bigl(\frac{x}{2}\Bigr)}\w_2(x)
      &\leq \bm{1}_{\{|x-y|\geq \frac{R}{2}\}}\frac{\w_a(x-y)\w_a(y)}{\w_{a-2}(x-y)\w_{a-2}(y)} + \bm{1}_{\{|x|\geq R\}} \frac{\w_a(x)}{\w_{a-2}(x)} \\*      
 &\leq \frac{2^{a-2}}{(2+R)^{a-2}}\w_{a}(x-y)\w_{a}(y) + \frac{1}{(1+R)^{a-2}}\w_{a}(x)\\*      
 &\leq \frac{C}{(1+R)^{a-2}}\w_{a}(x-y)\w_{a}(y).
\end{split}\end{equation*}}
This yields
\begin{equation*}\begin{split}
\|B_3h\|_{L^{1}(\w_{a})}
 &\leq \frac{C}{ {(1+R)}^{a-2}}\left(\int_{\R}\bm{H}(y)\w_a(y)\int_{\R} |h(x-y)| {\w_a}(x-y)\dx\dy\right)\sum_{i=1}^{3} \|\zeta_i\|_{L^1(\w_{a})} \\
 &\leq  \frac{C}{ {(1+R)}^{a-2}} \|h\|_{L^1(\w_{a})},
\end{split}\end{equation*}
for some contant $C>0$ where we also used $\bm{H}\in L^{1}(\w_{a})$. We then deduce that \eqref{B3} holds provided $R$ is large enough.
\end{proof}
\begin{proof}[Proof of Proposition \ref{prp:dissi}]
The proof follows directly from the combination of \eqref{B1}--\eqref{B2}--\eqref{B3} since it implies that, for any $\varepsilon >0$, one can choose $R >1$ large enough so that
$$\int_{\R}Bh(x)\mathrm{sign}(h(x))\w_{a}(x)\dx \leq -\left(1-\frac{a}{4}-2^{1-a}-3\varepsilon\right)\|h\|_{L^{1}(\w_{a})} \qquad \forall h \in \mathscr{D}(\mathscr{L}) \subset \mathbb{Y}_{a}^{0}$$
which gives the result choosing $\varepsilon >0$ small enough so that $\nu=1-\frac{a}{4}-2^{1-a}- {3\varepsilon} \geq0.$
\end{proof}
We establish now the regularising effect of $A$:
\begin{prp}\label{prp:bounded}
Let $2<a<3$. The operator $A:X_k\to \mathbb{Y}_a^{0}$ is bounded  {for any $k>2$}. 
\end{prp}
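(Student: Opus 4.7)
The strategy is to handle $A_1$ and $A_2 = -\mathcal{P}\circ A_1$ separately. For $A_1$, the key observation is that $A_1 h$ is supported in the compact interval $\{|x|\leq R/2+1\}$ thanks to the cut-off $\theta_R$, so $\w_a$ is bounded on $\supp(A_1 h)$ and it suffices to prove $\|A_1 h\|_{L^1}\leq C_{k,R}\,\vertiii{\widehat h}_k$. By Cauchy--Schwarz on the compact support this reduces to an $L^2$ estimate, which via Plancherel amounts to bounding $\|\widehat{A_1 h}\|_{L^2}$.

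The Fourier transform factorises as
\begin{equation*}
  \widehat{A_1 h}(\xi) = \frac{1}{2\pi}\bigl(\widehat{\theta_R}\ast \widehat v\bigr)(\xi), \qquad
  \widehat v(\xi) = 2\,\widehat{h\rho_R}(\xi/2)\,\bm{\Phi}(\xi/2),
\end{equation*}
where $\widehat{h\rho_R} = \frac{1}{2\pi}\,\widehat{h}\ast\widehat{\rho_R}$ (a formula that makes sense already for measures $h$ since $\rho_R$ is Schwartz). The main step is the pointwise bound
\begin{equation*}
  |\widehat{h\rho_R}(\xi)| \leq C_{k,R}\,\vertiii{\widehat h}_k\,(1+|\xi|^k),
\end{equation*}
obtained by inserting $|\widehat h(\eta)|\leq \vertiii{\widehat h}_k|\eta|^k$ into the convolution, splitting $|\xi-\eta|^k\leq 2^{k-1}(|\xi|^k+|\eta|^k)$, and exploiting the Schwartz decay of $\widehat{\rho_R}$. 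The exponential decay of $\bm{\Phi}(\xi/2)=(1+|\xi/2|)e^{-|\xi|/2}$ then absorbs the polynomial factor and yields $|\widehat v(\xi)|\leq C_{k,R}\,\vertiii{\widehat h}_k\,e^{-|\xi|/4}$. Convolution with the Schwartz function $\widehat{\theta_R}$ preserves rapid decay, so in particular $\|\widehat{A_1 h}\|_{L^2}\leq C_{k,R}\,\vertiii{\widehat h}_k$, which closes the estimate for $A_1$.

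For $A_2 h = -\mathcal{P}(A_1 h)$ the bound is immediate: $\mathcal{P}$ is rank three with range spanned by the Schwartz functions $\zeta_i \in L^1(\w_a)$ and defining functionals $f\mapsto \int f(y)y^j \dy$ ($j=0,1,2$), all bounded on $L^1(\w_a)$ since $a>2$. Hence $\|A_2 h\|_{L^1(\w_a)}\lesssim \|A_1 h\|_{L^1(\w_a)}\lesssim \vertiii{\widehat h}_k$. Finally, by the very construction of $\mathcal{P}$, the combination $Ah = (I-\mathcal{P})A_1 h$ has vanishing moments of order $0,1,2$ and therefore lies in $\mathbb{Y}_{a}^{0}$.

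The main obstacle is the second paragraph: $\vertiii{\widehat h}_k$ gives only the growing bound $|\widehat h(\xi)|\leq \vertiii{\widehat h}_k|\xi|^k$, and no uniform or total-variation control on $h$ is available. It is precisely the compact support of the cut-off $\rho_R$ (equivalently, the Schwartz regularity of $\widehat{\rho_R}$) combined with the exponential decay of $\bm{\Phi}$ that converts this unbounded polynomial growth into rapidly decreasing behavior of $\widehat{A_1 h}$ and thereby delivers the required $L^2$ estimate.
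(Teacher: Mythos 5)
Your argument is correct and follows essentially the same route as the paper: a Cauchy--Schwarz/Plancherel reduction exploiting the compact supports of $\theta_R$ and $\rho_R$, the pointwise bound on $\widehat{h}\ast\widehat{\rho_R}$ coming from the $\vertiii{\cdot}_{k}$ norm together with smoothness of $\rho_R$, absorption of the polynomial growth by the exponential decay of $\bm{\Phi}$, and the trivial boundedness of the rank-three projection $\mathcal{P}$ for the $A_2$ part. The only cosmetic difference is that the paper uses the support of $\theta_R$ to bound the weight before applying Plancherel to $(h\rho_R)\ast\bm{H}$, whereas you keep $\theta_R$ inside and handle it as a convolution with $\widehat{\theta_R}$ on the Fourier side.
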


This proposition follows directly from the following two lemmas. 

\begin{lem}\phantomsection \label{bound_A1}
Let $2<a<3$. There exists some constant $C>0$ such that, for any $h\in X_k$
$$\|A_1h\|_{L^1(\w_{a})} \le {C(k,R) \vertiii{\widehat{h}}_{k}}$$
for any $k>2$.
\end{lem}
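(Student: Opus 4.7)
The support structure of $A_1$ is the starting point: since $\theta_R$ vanishes outside $[-\tfrac{R}{2}-1, \tfrac{R}{2}+1]$, $A_1 h$ is supported there as well, and hence
$$\|A_1h\|_{L^1(\w_a)} \leq C(R,a)\,\|A_1h\|_{L^\infty}\,.$$
The plan therefore reduces to establishing a uniform pointwise bound $|A_1 h(x)| \leq C(R,k)\vertiii{\widehat{h}}_k$, which in turn amounts to controlling the convolution $((h\rho_R)\ast \bm{H})(z)$ uniformly in $z \in \R$. Notice that $h\rho_R$ is a compactly supported finite Borel measure, so this convolution is well defined as a continuous function of $z$.

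To do this I would pass to Fourier variables. Using $\widehat{\bm{H}} = \bm{\Phi}$ from \eqref{eq:Phi}, Fourier inversion gives
$$((h\rho_R)\ast\bm{H})(z) = \frac{1}{2\pi}\int_\R \widehat{h\rho_R}(\xi)\,\bm{\Phi}(\xi)\,e^{iz\xi}\,d\xi,$$
so it suffices to show $\int_\R |\widehat{h\rho_R}(\xi)|\,\bm{\Phi}(\xi)\,d\xi \leq C(R,k)\vertiii{\widehat{h}}_k$. Since $\rho_R \in \mathcal{C}_c^\infty(\R)$, the Fourier product rule reads $\widehat{h\rho_R} = \tfrac{1}{2\pi}\,\widehat{h}\ast\widehat{\rho_R}$. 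Combining the pointwise bound $|\widehat{h}(\eta)|\leq \vertiii{\widehat{h}}_k|\eta|^k$ (which is just the definition of the norm) with the elementary inequality $|\xi - \eta|^k \leq C_k(|\xi|^k + |\eta|^k)$, one obtains
$$|\widehat{h\rho_R}(\xi)| \leq C_k\,\vertiii{\widehat{h}}_k\,\left(|\xi|^k \int_\R |\widehat{\rho_R}(\eta)|\,d\eta + \int_\R |\eta|^k |\widehat{\rho_R}(\eta)|\,d\eta\right).$$
The crucial point is that $\widehat{\rho_R}$ is a Schwartz function, so both integrals on the right are finite constants depending only on $R$ and $k$.

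Multiplying by $\bm{\Phi}(\xi) = (1+|\xi|)e^{-|\xi|}$ and integrating, the exponential decay of $\bm{\Phi}$ makes $\int_\R (1+|\xi|^k)\bm{\Phi}(\xi)\,d\xi$ finite, providing the sought uniform bound on $((h\rho_R)\ast\bm{H})(z)$; integrating $|\theta_R((\cdot)/2)|$ against $\w_a$ on its compact support then yields the lemma. The main obstacle I would expect is precisely that the norm $\vertiii{\widehat{h}}_k$ permits $|\widehat{h}(\eta)|$ to grow like $|\eta|^k$ at infinity, so that a naive bound is not integrable; but this polynomial growth is absorbed entirely by the rapid decay of $\widehat{\rho_R}$, which itself comes from the compact support and smoothness of $\rho_R$, while the exponential decay of $\bm{\Phi}$ handles the remaining $|\xi|^k$ factor.
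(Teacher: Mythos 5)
Your proposal is correct, and it reaches the bound by a slightly different functional-analytic route than the paper. The paper keeps the estimate in $L^2$: it applies Cauchy--Schwarz in the physical variable (introducing an auxiliary weight $\w_{-\chi}$ with $\chi>\tfrac12$ and using that $\theta_R(\cdot/2)$ has compact support), then invokes Plancherel to reduce to an $L^2$ bound on $(\widehat{h}\ast\widehat{\rho_R})\,\widehat{\bm{H}}$, finishing with $|\widehat{h}(\eta)|\le \vertiii{\widehat{h}}_k|\eta|^k$, $\|\widehat{\rho_R}\|_{L^1(\w_k)}<\infty$ and $\w_k\widehat{\bm{H}}\in L^2$. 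You instead exploit the compact support of $\theta_R$ to reduce the weighted $L^1$ norm to $\|A_1h\|_{L^\infty}$, and then control $\|(h\rho_R)\ast\bm{H}\|_{L^\infty}$ by Fourier inversion, i.e.\ by the $L^1(\d\xi)$ norm of $\widehat{h\rho_R}\,\bm{\Phi}$; this is legitimate since $h\rho_R$ is a finite compactly supported measure, $(h\rho_R)\ast\bm{H}$ is bounded and continuous, and you verify that its Fourier transform is integrable. The underlying mechanism is identical in both arguments --- the polynomial growth $|\widehat{h}(\eta)|\le\vertiii{\widehat{h}}_k|\eta|^k$ is absorbed by the Schwartz decay of $\widehat{\rho_R}$ in the convolution and by the exponential decay of $\bm{\Phi}$ in the remaining $\xi$-integral --- but your version avoids Plancherel and the $\chi$-weight device entirely, trading them for the (equally standard) inversion theorem, which is arguably more elementary. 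One cosmetic slip: having already reduced to the support of $\theta_R(x)$ at the outset, your closing reference to integrating $\theta_R(\cdot/2)$ against $\w_a$ is redundant (it belongs to the paper's change-of-variables formulation), but this does not affect the argument; as in the paper, your constant depends on $R$, $k$ and $a$.
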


\begin{proof}
First, one observes as before that
\begin{multline*}
\|A_1h\|_{L^1(\w_{a})} \leq  2\int_{\R} |((h\rho_R)\ast \bm{H})(x)|\theta_R\left(\frac{x}{2}\right) \,\w_{a}\left(\frac{x}{2}\right) \d x\\
\leq  2\int_{\R} |((h\rho_R)\ast \bm{H})(x)|\theta_R\left(\frac{x}{2}\right) \,\w_{a}(x) \d x\end{multline*}
where we used that $\w_{a}\left(\frac{x}{2}\right) \leq \w_{a}(x)$. We then deduce from the Cauchy-Schwarz inequality that 
$$
\|A_1h\|_{L^1(\w_{a})}
\leq 2\left(\int_{\R} |((h\rho_R)\ast \bm{H})(x)|^2\,\theta^2_R\left(\frac{x}{2}\right) \w_{a}(x)^2\,(1+|x|)^{2\chi} \d x\right)^{\frac{1}{2}}\left( \int_{\R} \frac{\d x}{(1+|x|)^{2\chi}} \right)^{\frac{1}{2}}
$$
with $\chi>\frac{1}{2}$. Thus, it holds 
$$\|A_1h\|_{L^1(\w_{a})} \leq 2\|\w_{-\chi}\|_{L^{2}} \,\|((h\rho_R)\ast \bm{H})\theta_R\left(\frac{\cdot}{2}\right)\,\w_{a+\chi}\|_{L^2} $$
Since $\theta_R {\left(\frac{x}2\right)}=0$ for $|x|\ge R+2$, we have 
$$\theta_R\left(\frac{x}{2}\right) \w_{a+\chi}(x) \le (3+R)^{a+\chi},$$
and 
\begin{equation}\label{sepA}
\|A_1h\|_{L^1(\w_{a})} \leq 2\, (3+R)^{a+\chi} \|\w_{-\chi}\|_{L^{2}} \, \left\|((h\rho_R)\ast \bm{H})\right\|_{L^2}.
\end{equation} 
 We deduce from the properties of the Fourier transform that  
$$\|(h\rho_R)\ast \bm{H} \|_{L^2}= \frac{1}{\sqrt {2\pi}}\; \|\widehat{(h\rho_R)\ast \bm{H}} \|_{L^2}  = \frac{1}{\sqrt {2\pi}}\;\|\widehat{h\rho_R}\cdot \widehat{\bm{H}} \|_{L^2}= \frac{1}{(2\pi)^{\frac{3}{2}}} \; \|(\widehat{h}\ast \widehat{\rho_R})\, \widehat{\bm{H}} \|_{L^2}.$$
We have $|\eta|^k\leq (|\xi-\eta|+|\xi|)^k\leq \w_{k}(\xi-\eta)\w_{k}(\xi)$. Thus, 
\begin{equation*}\label{eq:convolution:fourier}
|(\widehat{h}\ast \widehat{\rho_R})(\xi) |
\leq   {\vertiii{\widehat{h}}_{k}} \int_{\R} |\eta|^k \, |\widehat{\rho_R}(\xi-\eta)| \d \eta \leq   {\vertiii{\widehat{h}}_{k}}\, \w_{k}(\xi)\,  \|\widehat{\rho_R}\|_{L^1(\w_{k})}. 
\end{equation*}
Hence, 
$$\|(h\rho_R)\ast \bm{H} \|_{L^2} \leq  \frac{ {1}}{(2\pi)^{\frac{3}{2}}}\; {\vertiii{\widehat{h}}_{k}} \, 
\| \widehat{\rho_R} \|_{L^1(\w_k)} \, \| \w_{k}(\cdot)\widehat{\bm{H}}\|_{L^2}\,.$$
Since $\rho_R \in {\mathcal C}^\infty(\R)$ is compactly supported, $\widehat{\rho_R}\in L^1(\w_{k})$ for any {$k>2$}. Furthermore, $\w_{k}(\xi)\widehat{\bm{H}}(\xi)= (1+|\xi|)^{k+1}e^{-|\xi|} \in L^2(\R)$  for any  {$k>2$}. Consequently, there exists some constant $C_1(k,R)>0$ such that 
\begin{equation}\label{A1}
\|(h\rho_R)\ast \bm{H} \|_{L^2} \leq  C_1(k,R)  {\vertiii{\widehat{h}}_{k}}.
\end{equation}
Gathering \eqref{sepA} and \eqref{A1} completes the proof. 
\end{proof}

\begin{lem}\phantomsection
Let $2<a<3$. There exists some constant $C>0$ such that, for any $h\in X_k$
$$\|A_2h\|_{L^1(\w_{a})} \leq  C {\vertiii{\widehat{h}}_{k}}$$
for any $k>2$.
\end{lem}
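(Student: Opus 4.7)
The plan is to reduce the bound to the previous lemma via the factorisation $A_2 = -\P\circ A_1$. Since Lemma~\ref{bound_A1} already provides $\|A_1 h\|_{L^1(\w_a)}\leq C(k,R)\vertiii{\widehat h}_k$, the whole estimate will follow as soon as the finite-rank projection $\P$ is shown to act boundedly on $L^1(\w_a)$; this is essentially the single step I would carry out.

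To bound $\P$ on $L^1(\w_a)$, I would estimate directly from \eqref{projection}:
\[
\|\P f\|_{L^1(\w_a)} \leq \sum_{i=1}^{3}\|\zeta_i\|_{L^1(\w_a)}\,\Bigl|\int_\R y^{i-1}f(y)\dy\Bigr|.
\]
Each $\zeta_i$ is a polynomial multiple of the Gaussian $\M$, hence belongs to $L^1(\w_a)$, so $\|\zeta_i\|_{L^1(\w_a)}$ is a finite constant for $i=1,2,3$. Since $a>2$, one also has $|y|^{i-1}\leq \w_a(y)$ for $i=1,2,3$ and every $y\in\R$, from which $\bigl|\int_\R y^{i-1}f(y)\dy\bigr|\leq \|f\|_{L^1(\w_a)}$. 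Combining these two observations yields $\|\P f\|_{L^1(\w_a)}\leq C_a\|f\|_{L^1(\w_a)}$, with $C_a:=\sum_{i=1}^3\|\zeta_i\|_{L^1(\w_a)}$.

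Applying this inequality to $f=A_1h$ and invoking Lemma~\ref{bound_A1}, I would conclude
\[
\|A_2h\|_{L^1(\w_a)} = \|\P(A_1h)\|_{L^1(\w_a)} \leq C_a\|A_1h\|_{L^1(\w_a)} \leq C_a\,C(k,R)\,\vertiii{\widehat h}_k,
\]
which is the desired estimate. There is essentially no obstacle at this stage: the genuine work was already done in Lemma~\ref{bound_A1}, and here one merely exploits that $\P$ is a finite-rank operator whose range is spanned by Schwartz-type functions lying in $L^1(\w_a)$ and whose coefficients are moments of order at most two, automatically controlled by the $L^1(\w_a)$ norm as soon as $a>2$.
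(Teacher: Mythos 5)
Your argument is correct and is essentially the paper's own proof: both reduce the estimate to Lemma \ref{bound_A1} by noting that $\P$ is a finite-rank operator whose range functions $\zeta_i$ lie in $L^1(\w_a)$ and whose coefficients are moments of order at most two of $A_1h$, which are controlled by $\|A_1h\|_{L^1(\w_a)}$ since $a>2$. The only difference is cosmetic (you sum the three terms where the paper uses $2(1+x^2)$ times the maximum of the $\|\zeta_i\|_{L^1(\w_a)}$).
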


\begin{proof}
It follows from the definition \eqref{projection} of $\P$ that 
\begin{equation*} 
\|A_2h\|_{L^1(\w_{a})} \leq  2 \int_\R |A_1h(x)| \, (1+x^2)\dx \max_{i\in\{1,2,3\}}  \|\zeta_i\|_{L^1(\w_{a})} 
\leq  C \|A_1h\|_{L^1(\w_{a})}, 
\end{equation*}
and the result follows from Lemma \ref{bound_A1}.
\end{proof}

 \begin{proof}[Proof of Theorem~\ref{restrict}]
 The existence of a spectral gap for $\mathscr{L}$ in $\mathbb{Y}_{a}^{0}$ is now a direct consequence of Propositions~\ref{prp:dissi} and \ref{prp:bounded} together with \cite[Theorem 5.2]{CanizoThrom}.
\end{proof}
  
\subsection{Additional consequences}\label{Sec:additional}

We finally establish some useful consequences of the above spectral gap estimates. Such consequences are particularly relevant for the study of the self-similar profiles associated to the $1D$ inelastic Boltzmann equation with moderate hard potentials as studied in the companion paper \cite{unique-short}. We believe that such results have their own interest and pertain to the present contribution since they only concern the solutions to the one-dimensional Boltzmann equation with Maxwell molecules. To motivate such results, we however briefly recall the definition of the collision operator associated with moderate hard potentials as studied in \cite{unique-short}. For $\g \in (0,1)$, we consider the collision operator $\Q_{\g}(f,g)$  given in weak form by
\begin{equation}\label{eq:weakgamma}
\int_{\R}\Q_{\g}(f,g)(x)\phi(x)\d x= {\frac12}\int_{\R^{2}}f(x)g(y)\left(2\phi\left(\frac{x+y}{2}\right)-\phi(x)-\phi(y)\right)|x-y|^{\g}\d x\d y\end{equation}
for any smooth enough test function $\phi=\phi(x)$. Notice that, for $\g=0,$ one recovers the expression \eqref{eq:weak} of $\Q_{0}.$ The solutions to the associated evolution problem
$$\partial_{t}f=\Q_{\g}(f,f)$$
are still dissipating energy and \eqref{eq:Ener} reads now
$$\dfrac{\d}{\d t}E(t)= -\frac{1}{4}\int_{\R^{2}}f(t,x)f(t,y)|x-y|^{2+\g}\d x\d y=-\frac{1}{4}\mathscr{D}_{\g}(f(t))\,.$$
One notices then that, in a regime $\g\simeq 0$, one expects 
$$|x-y|^{2+\g} \simeq |x-y|^{2}\left(1+\g\log|x-y|\right)$$
and, after linearising the above dissipation of energy $\mathscr{D}_{\g}(f)$ around a suitable self-similar profile, one is naturally led to the study of the \emph{``linearised dissipation of energy''} functional $\mathscr{I}_{0}(f,\bm{G}_{0})$ where $\bm{G}_{0}$ is a steady solution to \eqref{eq:IB-selfsim} with positive energy $E_{0}$ and with
\begin{equation}\label{eq:mathIo}
\mathscr{I}_{0}(f,g)=\int_{\R^{2}}f(x)g(y)|x-y|^{2}\log|x-y|\dx\dy, \qquad f,g \in L^{1}(\w_{s}), \qquad s>2.\end{equation} 
We refer to \cite{unique-short} for details. Since $\bm{G}_{0}$ is a steady solution to \eqref{eq:IB-selfsim}, according to Theorem \ref{theo:bob}, there exists $\lambda_{0} >0$ such that
$$\bm{G}_{0}(x)=\lambda_{0}\bm{H}(\lambda_{0} x)\,.$$
It is therefore particularly interesting to understand the link between the linearised operator $\mathscr{L}$ given by Definition~\ref{def:L} and the functional $\mathscr{I}_{0}$ defined in \eqref{eq:mathIo}. First of all, an important point is to prove that the linearised operator around $\bm{G}_{0}$ instead of $\bm{H}$ still enjoys the same properties of $\mathscr{L}$. Precisely, one defines
$$\mathscr{L}_{0}:\mathscr{D}(\mathscr{L}_{0}) \subset L^{1}(\w_{a})\to  L^{1}(\w_{a})$$
by {$$\mathscr{L}_{0}(h)=2\Q_{0}(h,\bm{G}_{0})-\frac{1}{4}\partial_{x}(xh), \qquad \forall h \in \mathscr{D}(\mathscr{L}_{0})$$}
with 
$$\mathscr{D}(\mathscr{L}_{0})=\left\{f \in  L^{1}(\w_{a})\;;\;\partial_{x}(x f) \in  L^{1}(\w_{a})\right\}$$
and $\bm{G}_0=\lambda_0\bm{H}(\lambda_0 \cdot)$ with $\lambda_0>0$. By a simple scaling argument, the result from Theorem~\ref{restrict} can be transferred to $\mathscr{L}_0$ :
\begin{prp}\label{restrict0}
  Let $2<a<3$. The operator $\left(\mathscr{L}_{0},\mathscr{D}(\mathscr{L}_{0})\right)$ on $ L^{1}(\w_{a})$ is such that, for any $ {\nu \in(0,1-\frac{a}{4} -2^{1-a})}$, there exists $C(\nu)>0$ such that
\begin{equation}\label{eq:invert0} 
 \|\mathscr{L}_{0}h\|_{ L^{1}(\w_{a})}\ge \frac{\nu}{C(\nu)} \|h\|_{ L^{1}(\w_{a})}, \qquad \forall\, h \in \mathscr{D}(\mathscr{L}_{0}) \cap \mathbb{Y}_{a}^{0}.\end{equation}
In particular, the restriction $\widetilde{\mathscr{L}}_{0}$ of $\mathscr{L}_{0}$ to the space $\mathbb{Y}_{a}^{0}$  
is invertible with
\begin{equation}\label{eq:invert}\left\|\widetilde{\mathscr{L}}_{0}^{-1}g\right\|_{ L^{1}(\w_{a})} \leq \frac{C(\nu)}{\nu}\|g\|_{ L^{1}(\w_{a})}, \qquad \forall\, g \in \mathbb{Y}_{a}^{0}.\end{equation}
\end{prp}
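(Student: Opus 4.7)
The plan is to reduce Proposition~\ref{restrict0} to Theorem~\ref{restrict} by an explicit scaling conjugation, then invoke standard semigroup theory for the invertibility statement \eqref{eq:invert}. For $\lambda>0$ I introduce the $L^{1}$-preserving dilation $U_{\lambda}f(x):=\lambda f(\lambda x)$, whose inverse is $U_{1/\lambda}$. A direct change of variables shows that $\Q_{0}$ intertwines with this dilation, $U_{\lambda}\Q_{0}(f,g)=\Q_{0}(U_{\lambda}f,U_{\lambda}g)$ (this is essentially the scaling invariance already invoked in Remark~\ref{rem:scal}), and an elementary computation gives $U_{\lambda}(\partial_{x}(xf))=\partial_{x}(x\,U_{\lambda}f)$. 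Since $\bm{G}_{0}=U_{\lambda_{0}}\bm{H}$, combining these two identities yields the conjugation
\begin{equation*}
 \mathscr{L}_{0}=U_{\lambda_{0}}\,\mathscr{L}\,U_{\lambda_{0}}^{-1}.
\end{equation*}

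Next I check that this conjugation respects all the relevant structure. The three moment conditions defining $\mathbb{Y}_{a}^{0}$ are preserved by $U_{\lambda}^{\pm 1}$ up to positive scalar factors, so $U_{\lambda_{0}}^{-1}$ maps $\mathscr{D}(\mathscr{L}_{0})\cap\mathbb{Y}_{a}^{0}$ bijectively onto $\mathscr{D}(\mathscr{L})\cap\mathbb{Y}_{a}^{0}$. Moreover, the change of variables $y=\lambda_{0}x$ gives $\|U_{\lambda_{0}}\tilde h\|_{L^{1}(\w_{a})}=\int_{\R}|\tilde h(y)|(1+|y|/\lambda_{0})^{a}\d y$, and the elementary comparison $(1+|y|/\lambda_{0})^{a}\asymp(1+|y|)^{a}$ uniformly in $y\in\R$ produces constants $\kappa_{1}(\lambda_{0},a),\kappa_{2}(\lambda_{0},a)>0$ with
\begin{equation*}
\kappa_{1}\,\|\tilde h\|_{L^{1}(\w_{a})}\le \|U_{\lambda_{0}}\tilde h\|_{L^{1}(\w_{a})}\le \kappa_{2}\,\|\tilde h\|_{L^{1}(\w_{a})}.
\end{equation*}
Given $h\in\mathscr{D}(\mathscr{L}_{0})\cap\mathbb{Y}_{a}^{0}$, setting $\tilde h:=U_{\lambda_{0}}^{-1}h$ and applying Theorem~\ref{restrict} to $\tilde h$, together with the conjugation identity and the norm equivalence, yields
\begin{equation*}
\|\mathscr{L}_{0}h\|_{L^{1}(\w_{a})}=\|U_{\lambda_{0}}\mathscr{L}\tilde h\|_{L^{1}(\w_{a})}\ge \kappa_{1}\tfrac{\nu}{C(\nu)}\|\tilde h\|_{L^{1}(\w_{a})}\ge \tfrac{\kappa_{1}}{\kappa_{2}}\tfrac{\nu}{C(\nu)}\|h\|_{L^{1}(\w_{a})},
\end{equation*}
which after absorbing $\kappa_{1}/\kappa_{2}$ into $C(\nu)$ proves \eqref{eq:invert0}.

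For the invertibility \eqref{eq:invert}, the same conjugation identity shows that if $(S_{0}(t))_{t\ge 0}$ is the $C_{0}$-semigroup on $\mathbb{Y}_{a}^{0}$ produced by Theorem~\ref{restrict}, then $\widetilde{S}_{0}(t):=U_{\lambda_{0}}S_{0}(t)U_{\lambda_{0}}^{-1}$ is a $C_{0}$-semigroup on $\mathbb{Y}_{a}^{0}$ with generator $\widetilde{\mathscr{L}}_{0}$ satisfying, thanks to the norm equivalence, $\|\widetilde{S}_{0}(t)\|_{\mathbb{Y}_{a}^{0}\to\mathbb{Y}_{a}^{0}}\le \widetilde{C}(\nu)\exp(-\nu t)$. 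The classical fact that the generator of an exponentially stable $C_{0}$-semigroup is invertible, with $\widetilde{\mathscr{L}}_{0}^{-1}=-\int_{0}^{\infty}\widetilde{S}_{0}(t)\,\d t$, then yields the norm bound $\|\widetilde{\mathscr{L}}_{0}^{-1}\|\le \widetilde{C}(\nu)/\nu$, which is precisely \eqref{eq:invert}.

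I expect the only genuine obstacle to be purely bookkeeping: tracking the $\lambda_{0}$-dependent constants $\kappa_{1},\kappa_{2}$ carefully so that they can be cleanly absorbed into a single $C(\nu)$, and certifying that the domain condition $\partial_{x}(xh)\in L^{1}(\w_{a})$ transports cleanly under $U_{\lambda_{0}}^{\pm 1}$ so that no spurious regularity issue arises when invoking Theorem~\ref{restrict} on $\tilde h$. Once the conjugation is in place, the content of Proposition~\ref{restrict0} is essentially a restatement of Theorem~\ref{restrict} on a rescaled space.
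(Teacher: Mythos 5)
Your proposal is correct and follows essentially the same route as the paper: the paper conjugates via $\tau_{0}f=f(\cdot/\lambda_{0})$, which differs from your $U_{\lambda_{0}}$ only by a scalar factor that commutes with the linear operator, establishes $\mathscr{L}_{0}=\tau_{0}^{-1}\mathscr{L}\tau_{0}$, and then deduces both \eqref{eq:invert0} and \eqref{eq:invert} from Theorem~\ref{restrict}. You merely make explicit two points the paper leaves implicit (the equivalence of the weighted norms under dilation and the semigroup formula $\widetilde{\mathscr{L}}_{0}^{-1}=-\int_{0}^{\infty}\widetilde{S}_{0}(t)\,\d t$ giving surjectivity), which is sound bookkeeping rather than a different argument.
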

 \begin{proof}  We consider $a \geq k$ and the spaces $X_{k}$ and $L^{1}(\w_{a})$ defined previously so that $\mathbb{Y}_{a}^{0} \subset X_{k}.$ Notice that $\mathscr{D}(\mathscr{L}_{0})=\mathscr{D} {(\mathscr{L})} \cap L^{1}(\w_{a})$ and, since $\bm{G}_{0}(\cdot)=\lambda_{0}\bm{H}(\lambda_{0}\cdot)$, one checks easily that, for any test function $\phi$
$$\int_{\R}\mathscr{L}_{0}(f)(x)\phi(x)\d x=\frac{1}{\lambda_{0}}\int_{\R}\mathscr{L}(\tau_{0}{f})(x)\phi\left(\lambda_{0}^{-1}x\right)\d x=\int_{\R}\mathscr{L}(\tau_{0}{f})(\lambda_{0} y)\phi(y)\d y$$
where 
$$\tau_{0}{f}(x)=f\left(\frac{x}{\lambda_{0}}\right), \qquad x \in \R.$$
This shows that
$$\mathscr{L}_{0}f=\tau_{0}^{-1}\mathscr{L}\left(\tau_{0}f\right), \qquad \forall f \in \mathscr{D}(\mathscr{L}_{0}).$$
In particular, since $\mathbb{Y}_{a},\mathbb{Y}_{a}^{0}$ are invariant under the action of the bijective transformation $\tau_{0}$ and of course
$$\mathrm{Range}(\mathscr{L}_{0})=\mathrm{Range}(\mathscr{L})=\mathbb{Y}_{a}^{0}$$
one sees that $\mathbb{Y}_{a}^{0}$
is a closed linear subspace of $L^{1}(\w_{a})$ stable under $\mathscr{L}_{0}.$ This allows to define in a standard way  the restriction $\widetilde{\mathscr{L}}_{0}:=\mathscr{L}_{0}\vert_{\mathbb{Y}_{a}^{0}}$ of $\mathscr{L}_{0}$ to the space $\mathbb{Y}_{a}^{0}$ 
$$\widetilde{\mathscr{L}}_{0}=\mathscr{L}_{0}\vert_{\mathbb{Y}_{a}^{0}} \;\;:\;\;\mathscr{D}(\mathscr{L}_{0}) \cap \mathbb{Y}_{a}^{0} \to \mathbb{Y}_{a}^{0}$$
and one can deduce then from  Theorem \ref{restrict} the result.\end{proof}
 A first result to understand the connection between $\mathscr{L}$ and $\mathscr{I}_0$ is the following
\begin{lem}\label{rmk:kern} The function defined by
$$g_{0}(x)=\frac{2}{\pi}\frac{1-3x^{2}}{(1+x^{2})^{3}}, \quad x \in \R$$
belongs to $\mathbb{Y}_{a}$ and 
is such that
$$\mathscr{L}(g_{0})=0 \qquad \text{ and }\qquad  M_{2}(g_{0})=-2.$$
Moreover,
\begin{equation}\label{eq:g0H}
\mathscr{I}_{0}(g_{0},\bm{H})= -2\log 2-2.\end{equation}
Finally, it holds 
$$\mathscr{I}_{0}(\bm{H},\bm{H})=2\log 2+1.$$
\end{lem}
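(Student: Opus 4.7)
The key starting observation is that
\begin{equation*}
g_0(x) = \partial_x\bigl(x\bm{H}(x)\bigr),
\end{equation*}
which may be checked directly from $\bm{H}(x)=2/[\pi(1+x^2)^2]$. Since $\bm{H}$ decays like $|x|^{-4}$, this gives $g_0\in L^1(\w_a)$ for every $2<a<3$. Integrating by parts (the boundary terms $x^j\cdot x\bm{H}(x)\to 0$ for $j=0,1,2$) yields $\int_{\R}g_0\dx=0$, $\int_{\R}xg_0\dx=-\int_{\R}x\bm{H}\dx=0$ by evenness, and $M_2(g_0)=-2\int_{\R}x^2\bm{H}\dx=-2$, so $g_0\in\mathbb{Y}_a$ with $M_2(g_0)=-2$. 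The kernel identity $\mathscr{L}(g_0)=0$ then comes from a scaling argument: Theorem~\ref{theo:bob} exhibits the one-parameter family $H_\lambda(x)=\lambda\bm{H}(\lambda x)$ of stationary solutions of \eqref{eq:IB-selfsim}; differentiating the steady equation $-\tfrac14\partial_x(xH_\lambda)+\Q_{0}(H_\lambda,H_\lambda)=0$ in $\lambda$ at $\lambda=1$ and using bilinearity and symmetry of $\Q_{0}$ gives $\mathscr{L}(\dot H_1)=0$, and a direct check shows $\dot H_1=\bm{H}+x\bm{H}'=\partial_x(x\bm{H})=g_0$.

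To obtain $\mathscr{I}_0(g_0,\bm{H})$ I would reduce it to $\mathscr{I}_0(\bm{H},\bm{H})$. Substituting $g_0=\partial_x(x\bm{H})$ and integrating by parts in $x$ using $\partial_x[|x-y|^2\log|x-y|]=(x-y)[2\log|x-y|+1]$,
\begin{equation*}
\mathscr{I}_0(g_0,\bm{H})=-\int_{\R^2}x\bm{H}(x)\bm{H}(y)(x-y)\bigl[1+2\log|x-y|\bigr]\dx\dy.
\end{equation*}
The polynomial part evaluates via the moments of $\bm{H}$ to $-1$. For the logarithmic part, symmetrising in $(x,y)$ (allowed since $\bm{H}$ is even) gives
\begin{equation*}
\int_{\R^2}x(x-y)\bm{H}(x)\bm{H}(y)\log|x-y|\dx\dy=\tfrac12\int_{\R^2}(x-y)^2\bm{H}(x)\bm{H}(y)\log|x-y|\dx\dy=\tfrac12\mathscr{I}_0(\bm{H},\bm{H}),
\end{equation*}
so that $\mathscr{I}_0(g_0,\bm{H})=-1-\mathscr{I}_0(\bm{H},\bm{H})$. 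The lemma is thus reduced to the final claim $\mathscr{I}_0(\bm{H},\bm{H})=2\log 2+1$.

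This last step is the main technical obstacle. Writing $\mathscr{I}_0(\bm{H},\bm{H})=\int_{\R}(\bm{H}\ast\bm{H})(z)\,z^2\log|z|\dz$, the convolution is obtained by Fourier inversion of $\bm{\Phi}^2(\xi)=(1+|\xi|)^2e^{-2|\xi|}$ using the elementary transforms of $e^{-a|\xi|}$, $|\xi|e^{-a|\xi|}$, $\xi^2 e^{-a|\xi|}$; one finds $(\bm{H}\ast\bm{H})(z)=4(20+z^2)/[\pi(4+z^2)^3]$. Rather than attacking the resulting logarithmic integral directly, I would isolate the $\log 2$ by introducing the analytic family
\begin{equation*}
F(\alpha):=\int_{\R^2}\bm{H}(x)\bm{H}(y)|x-y|^{2+\alpha}\dx\dy,\qquad\mathscr{I}_0(\bm{H},\bm{H})=F'(0).
\end{equation*}
The substitution $z=2u$ reduces $F(\alpha)$ to Beta-function evaluations and gives
\begin{equation*}
F(\alpha)=\frac{2^{2+\alpha}}{4\pi}\Bigl[5\,\Gamma\!\bigl(\tfrac{3+\alpha}{2}\bigr)\Gamma\!\bigl(\tfrac{3-\alpha}{2}\bigr)+\Gamma\!\bigl(\tfrac{5+\alpha}{2}\bigr)\Gamma\!\bigl(\tfrac{1-\alpha}{2}\bigr)\Bigr].
\end{equation*}
Differentiating at $\alpha=0$, the prefactor $2^{2+\alpha}$ contributes $\log 2\cdot F(0)=2\log 2$ (with $F(0)=2$ recovering the known second moment of $\bm{H}\ast\bm{H}$); the first Gamma product is even in $\alpha$ and so has vanishing derivative at $0$; and the second Gamma product contributes $\tfrac12\Gamma(5/2)\Gamma(1/2)\bigl[\psi(5/2)-\psi(1/2)\bigr]=\tfrac12\cdot\tfrac{3\pi}{4}\cdot\tfrac{8}{3}=\pi$, yielding an overall $+1$. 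Combining gives $F'(0)=2\log 2+1$. The expected difficulty lies precisely in this digamma/Gamma bookkeeping; the clean cancellations --- the $\alpha\mapsto-\alpha$ symmetry of the first term and the identity $\psi(5/2)-\psi(1/2)=8/3$ --- are what produce the compact answer.
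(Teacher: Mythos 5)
Your proposal is correct, and it reaches all four claims by a genuinely different route than the paper. You identify $g_{0}=\partial_x(x\bm{H})=\frac{\d}{\d\lambda}\big(\lambda\bm{H}(\lambda\cdot)\big)\big|_{\lambda=1}$ and get $\mathscr{L}(g_{0})=0$ by differentiating the one-parameter family of steady states in $\lambda$, whereas the paper verifies directly in Fourier variables that $\psi_{0}(\xi)=|\xi|^{2}e^{-|\xi|}$ solves the stationary linearised equation \eqref{eq:linearised-fourier} and recognises $g_{0}=-G''$ with $G$ the Cauchy density (the two identifications agree, since $\widehat{g_0}=\psi_0$); your scaling argument is clean but should be performed in the weak or Fourier formulation so that the $\lambda$-differentiation is licit, which is routine. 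For the integrals, the paper never computes $\bm{H}\ast\bm{H}$: it evaluates $\mathscr{I}_{0}(g_{0},\bm{H})$ and $\mathscr{I}_{0}(\bm{H},\bm{H})$ by testing the weak forms of the linearised and of the steady nonlinear equation against $\phi(x)=x^{2}\log|x|$, which reduces everything to two elementary Cauchy-type log-integrals ($\int_{\R}\bm{H}x^{2}\log|x|\dx=1$, $\int_{\R}G\log|x|\dx=0$). You instead integrate by parts to get the structural relation $\mathscr{I}_{0}(g_{0},\bm{H})=-1-\mathscr{I}_{0}(\bm{H},\bm{H})$ (which is exactly the identity $\mathscr{I}_{0}(g_{0}+\bm{H},\bm{H})=-1$ the paper exploits later in Lemma~\ref{rmk:varphi0}), and then compute $\mathscr{I}_{0}(\bm{H},\bm{H})=F'(0)$ from the Mellin-type family $F(\alpha)$; I checked your convolution $(\bm{H}\ast\bm{H})(z)=4(20+z^{2})/[\pi(4+z^{2})^{3}]$, the Beta-function formula for $F(\alpha)$ (with $F(0)=2$), the evenness of the first Gamma product, and $\psi(5/2)-\psi(1/2)=8/3$, all of which are right and give $2\log 2+1$. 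The trade-off: the paper's route uses the equations themselves and avoids special functions entirely; yours is heavier on explicit Gamma/digamma bookkeeping (and needs the standard domination argument, using moments of order $<3$, to differentiate $F$ under the integral), but it is self-contained and makes the relation between the two $\mathscr{I}_{0}$ values transparent rather than incidental.
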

\begin{proof} Let $g \in L^{1}(\w_{a})$  {with $2<a<3$} be such that $\mathscr{L}(g)=0$ and $\ds\int_{\R} g(x)\d x=0.$ Setting
$$\psi(\xi)=\int_{\R}e^{-i\xi x}g(x)\d x \qquad  {\mbox{ and }  \qquad 
\bm{\Phi}(\xi)=\int_{\R}e^{-i\xi x} \bm{H}(x)\d x, } $$
one checks without too many difficulties that {(see also \eqref{eq:linearised-fourier})}
$$-\frac{1}{4}\xi \frac{\d}{\d \xi}\psi(\xi)=2\psi\left(\frac{\xi}{2}\right)\bm{\Phi}\left(\frac{\xi}{2}\right)-\psi(\xi).$$
Recalling that $\bm{\Phi}(\xi) = (1 + |\xi|) e^{-|\xi|},$ direct inspection shows that
$$\psi_{0}(\xi)=|\xi|^{2}e^{-|\xi|}$$
is a solution to the above equation, with 
\begin{equation}\label{eq:psi0}
\psi_{0}(0)=\psi'_{0}(0)=0, \quad \psi''_{0}(0)=2 \neq 0.\end{equation}
Moreover, since $e^{-|\xi|}$ is the Fourier transform of $G(x)=\frac{1}{\pi(1+x^{2})}$, one deduces that $\psi_{0}$ is the Fourier transform of 
$$g_{0}(x)=-\dfrac{\d^{2}}{\d x^{2}}G(x)= {\frac{2}{\pi}}\frac{1-3x^{2}}{(1+x^{2})^{3}}.$$
Notice that $g_{0} \in L^{1}(\w_{a})$ for any $2<a<3$ and \eqref{eq:psi0} shows that $g_{0} \in \mathbb{Y}_{a}$ with $M_{2}(g_{0})=-2.$ Let us now prove \eqref{eq:g0H}. Observe that, if $g$ is an eigenfunction of $\mathscr{L}$ with zero mass, then using the weak form of the linearised operator $\mathscr{L}$, $$ \frac14 \int_\R g(x) x \p_x \phi\,\dx
  + 2 \int_\R \int_\R g(x) \bm{H}(y) \left(
    \phi\Big(\frac{x-y}{2}\Big) - \frac12\,\phi(x) - \frac12\,{\phi(-y)}
  \right) \dy \d x=0\,,$$
where we used also that $\bm{H}$ is even. Taking $\phi(x)=x^{2}\log|x|=\frac{1}{2}x^{2}\log x^{2}$ as a test-function we get
\begin{multline*}
\frac{1}{8}\int_{\R}g(x)x\p_x(x^{2}\log x^{2})\,\dx+2\int_{\R^{2}}g(x)\bm{H}(y)\frac{|x-y|^{2}}{4}\log\frac{|x-y|}{2}\d x\d y\\
-\int_{\R}g(x)x^{2}\log |x|\d x=0\,,\end{multline*}
where we used that $\ds\int_{\R}g(x)\d x=0$ while $\ds\int_{\R}\bm{H}(y)\d y=1$. Thus, one obtains that any eigenfunction of $\mathscr{L}$ with zero mass is such that
\begin{equation}\label{eq:IoGG}\begin{split}
\mathscr{I}_{0}(g,\bm{H})&:=\int_{\R^{2}}g(x)\bm{H}(y)|x-y|^{2}\log|x-y|\dx\dy\\
&=\left(\log 2-\frac{1}{2}\right)\int_{\R}g(x)x^{2}\d x  { + } \int_{\R}g(x)x^{2}\log |x|\d x.\end{split}\end{equation} 
In particular, for $g=g_{0}=-\frac{\d^{2}}{\d x^{2}}G$ as defined previously, it holds that
\begin{equation*}\begin{split}
\int_{\R}g_{0}(x)x^{2}\log|x|\d x&=-\int_{\R}G(x)\dfrac{\d^{2}}{\d x^{2}}\left[x^{2}\log|x|\right]\dx=-\frac{1}{2}\int_{\R}G(x)\dfrac{\d^{2}}{\d x^{2}}\left[x^{2}\log x^{2}\right]\dx\\
&=-2\int_{\R}G(x)\log|x|\dx-3\int_{\R}G(x)\dx=-3\,,\end{split}
\end{equation*}
using $\ds\int_{\R}G(x)\dx=1$, and
$$\int_{\R}\frac{\log|x|}{1+x^{2}}\dx=2\int_{0}^{\infty}\frac{\log x}{1+x^{2}}\dx=0.$$
Therefore, recalling that $M_{2}(g_{0})= {-2}$, we deduce \eqref{eq:g0H}. The same idea gives also the expression of $\mathscr{I}_{0}(\bm{H},\bm{H})$. Indeed, by definition
\begin{equation*}\begin{split}
-\frac{1}{4}\int_{\R}x\bm{H}(x)\p_{x}\phi(x)\d x&=\int_{\R}\Q_{0}(\bm{H},\bm{H})\phi \dx\\
&=\int_{\R^{2}}\bm{H}(x)\bm{H}(y)\left[\phi\left(\frac{x+y}{2}\right)-\phi(x)\right]\dx \dy.\end{split}\end{equation*}
With $\phi(x)=|x|^{2}\log |x|$, this gives, since $\ds\int_{\R}\bm{H}(x)\dx=\ds\int_{\R}\bm{H}(x)x^{2}\dx=1$,
\begin{equation*}\begin{split}
-\frac{1}{2}\int_{\R}x^{2}\bm{H}(x)\log|x|\d x-\frac{1}{4}&=\frac{1}{4}\int_{\R^{2}}\bm{H}(x)\bm{H}(y)|x+y|^{2}\log|x+y|\dx\dy\\
& -\frac{\log 2}{4}\int_{\R^{2}}\bm{H}(x)\bm{H}(y)|x+y|^{2}\dx\dy-\int_{\R}\bm{H}(x)x^{2}\log|x|\dx\\
&=\frac{1}{4}\mathscr{I}_{0}(\bm{H},\bm{H})-\frac{\log2}{2}-\int_{\R}\bm{H}(x)x^{2}\log|x|\dx
\end{split}\end{equation*}
i.e.
$$\mathscr{I}_{0}(\bm{H},\bm{H})=2\log 2-1+2\int_{\R}\bm{H}(x)x^{2}\log |x|\dx.$$
Using that $\ds \int_{\R}\bm{H}(x)x^{2}\log|x|\dx=1$
we deduce the result.
\end{proof}
Thanks to the previous observations, we deduce the following lemma.
\begin{lem}\label{rmk:varphi0} Let 
$$\bm{G}_{0}(x)=\lambda_{0}\bm{H}(\lambda_{0}x) \qquad \text{with} \quad \lambda_{0}= {\exp\left(\frac{1}{2}\mathscr{I}_{0}(\bm{H},\bm{H})\right)}$$
and let $\mathscr{L}_{0}$ be the associated linearized operator in $L^{1}(\w_{a})$ with $2 < a < 3$.  There exists $\varphi_{0} \in \mathrm{Ker}(\mathscr{L}_{0}) \cap \mathbb{Y}_{a}$ such that
$$M_{2}(\varphi_{0}) \neq 0 \qquad \text{ and } \quad \mathscr{I}_{0}(\varphi_{0},\bm{G}_{0}) \neq 0.$$
\end{lem}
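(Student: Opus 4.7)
The natural candidate is $\varphi_{0}(x):=g_{0}(\lambda_{0}x)$, where $g_{0}$ is the explicit kernel element constructed in Lemma \ref{rmk:kern}. My plan is to verify the three required properties by scaling from the already-known properties of $g_{0}$, exploiting the conjugation identity $\mathscr{L}_{0}f=\tau_{0}^{-1}\mathscr{L}(\tau_{0}f)$ established in the proof of Proposition \ref{restrict0}, where $\tau_{0}f(x)=f(x/\lambda_{0})$.

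First, since $\tau_{0}\varphi_{0}=g_{0}$ and $\mathscr{L}(g_{0})=0$, the conjugation identity gives $\mathscr{L}_{0}\varphi_{0}=0$ at once. The changes of variables
\[
\int_{\R}\varphi_{0}(x)\d x = \lambda_{0}^{-1}\int_{\R}g_{0}(y)\d y = 0, \qquad \int_{\R}x\,\varphi_{0}(x)\d x = \lambda_{0}^{-2}\int_{\R}y\,g_{0}(y)\d y = 0
\]
then show $\varphi_{0}\in\mathbb{Y}_{a}$, while the same computation yields $M_{2}(\varphi_{0})=\lambda_{0}^{-3}M_{2}(g_{0})=-2\lambda_{0}^{-3}\neq 0$.

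The only substantive step is the evaluation of $\mathscr{I}_{0}(\varphi_{0},\bm{G}_{0})$. Substituting $u=\lambda_{0}x$, $v=\lambda_{0}y$ in the defining integral and using $\bm{G}_{0}(y)=\lambda_{0}\bm{H}(\lambda_{0}y)$, one obtains
\[
\mathscr{I}_{0}(\varphi_{0},\bm{G}_{0}) = \lambda_{0}^{-3}\int_{\R^{2}} g_{0}(u)\bm{H}(v)|u-v|^{2}\bigl(\log|u-v|-\log\lambda_{0}\bigr)\d u\,\d v.
\]
Expanding $|u-v|^{2}=u^{2}-2uv+v^{2}$ and using $g_{0}\in\mathbb{Y}_{a}$ together with the moments $\int_{\R}\bm{H}\,\d v=\int_{\R}v^{2}\bm{H}\,\d v=1$ and $\int_{\R}v\,\bm{H}\,\d v=0$, the quadratic part against $\bm{H}$ collapses to $M_{2}(g_{0})=-2$, so the $\log\lambda_{0}$ contribution is $-\log\lambda_{0}\cdot M_{2}(g_{0})=2\log\lambda_{0}$, leaving
\[
\mathscr{I}_{0}(\varphi_{0},\bm{G}_{0}) = \lambda_{0}^{-3}\bigl(\mathscr{I}_{0}(g_{0},\bm{H})+2\log\lambda_{0}\bigr).
\]
Plugging in $\mathscr{I}_{0}(g_{0},\bm{H})=-2\log 2-2$ and $2\log\lambda_{0}=\mathscr{I}_{0}(\bm{H},\bm{H})=2\log 2+1$ from Lemma \ref{rmk:kern}, the bracket collapses to $-1$, giving $\mathscr{I}_{0}(\varphi_{0},\bm{G}_{0})=-\lambda_{0}^{-3}\neq 0$.

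There is no genuine obstacle—the result is essentially bookkeeping once the explicit element $g_{0}$ and the two integrals $\mathscr{I}_{0}(g_{0},\bm{H})$, $\mathscr{I}_{0}(\bm{H},\bm{H})$ from Lemma \ref{rmk:kern} are in hand. The only mildly delicate point is that the prescribed value $\lambda_{0}=\exp(\tfrac{1}{2}\mathscr{I}_{0}(\bm{H},\bm{H}))=2\sqrt{e}$ does not coincide with the isolated value $\lambda_{0}=2e$ at which the bracket above would vanish, so non-degeneracy of $\mathscr{I}_{0}(\varphi_{0},\bm{G}_{0})$ is safely preserved.
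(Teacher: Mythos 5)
Your proposal is correct and follows essentially the same route as the paper: the same candidate $\varphi_{0}(x)=g_{0}(\lambda_{0}x)$, the same scaling computation giving $\mathscr{I}_{0}(\varphi_{0},\bm{G}_{0})=\lambda_{0}^{-3}\bigl(\mathscr{I}_{0}(g_{0},\bm{H})+2\log\lambda_{0}\bigr)=-\lambda_{0}^{-3}$, and the same input values from Lemma \ref{rmk:kern}. Your extra verifications (kernel membership via the conjugation $\mathscr{L}_{0}=\tau_{0}^{-1}\mathscr{L}\tau_{0}$ and the explicit check $M_{2}(\varphi_{0})=-2\lambda_{0}^{-3}\neq 0$) are consistent with, and only slightly more detailed than, the paper's argument.
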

\begin{proof} Since the function $g_{0}$ defined in Lemma \ref{rmk:kern} belongs to the kernel of $\mathscr{L}$ and to $\mathbb{Y}_{a}$, one has
$$\varphi_{0}(x)=g_{0}(\lambda_{0}x) \in \mathbb{Y}_{a} \cap \mathrm{Ker}(\mathscr{L}_{0}).$$
Moreover, recalling the definition of $\mathscr{I}_{0}$ in \eqref{eq:mathIo}  and since $\bm{G}_{0}(x)=\lambda_{0}\bm{H}(\lambda_{0}x)$, one checks easily that
\begin{multline*}
\mathscr{I}_{0}(\varphi_{0},\bm{G}_{0})=\frac{1}{\lambda_{0}^{3}}\left(\mathscr{I}_{0}(g_{0},\bm{H})-\log \lambda_{0}\,\int_{\R^{2}}g_{0}(x)\bm{H}(y)|x-y|^{2}\d x\d y\right)\\
=\frac{1}{\lambda_{0}^{3}}\left(\mathscr{I}_{0}(g_{0},\bm{H})-\log \lambda_{0}M_{2}(g_{0})\right)\end{multline*}
where we used that $g_{0} \in \mathbb{Y}_{a}.$ In particular, since $M_{2}(g_{0})= {-2}$ and $\lambda_0= {\exp\left(\frac12 \mathscr{I}_{0}(\bm{H},\bm{H})\right)}$ , we deduce that
$$\mathscr{I}_{0}(\varphi_{0},\bm{G}_{0})=\frac{1}{\lambda_{0}^{3}}\mathscr{I}_{0}\left( {g_{0}+\bm{H}},\bm{H}\right)=-\frac{1}{\lambda^{3}_{0}} \neq 0$$
where we used that $\mathscr{I}_{0}(g_{0},\bm{H})= -2\log 2-2$ and $\mathscr{I}_{0}(\bm{H},\bm{H}) = 2\log 2+1$.\end{proof}
The existence of the above function $\varphi_{0}$ implies the following fundamental property of the linearised dissipation of energy: 
\begin{lem}\label{lem:roleI0}Let 
$$\bm{G}_{0}(x)=\lambda_{0}\bm{H}(\lambda_{0}x) \qquad \text{with} \quad \lambda_{0}={\exp\left(\frac{1}{2}\mathscr{I}_{0}(\bm{H},\bm{H})\right)}$$
and let $\mathscr{L}_{0}$ be the associated linearized operator in $L^{1}(\w_{a})$ with $2 < a < 3$. If $\varphi \in \mathrm{Ker}(\mathscr{L}_{0} ) \cap \mathbb{Y}_{a}$ then 
$$\mathscr{I}_{0}(\varphi,\bm{G}_{0})=0 \implies  M_{2}(\varphi)=0.$$
In particular, in such a case, $\varphi=0.$
\end{lem}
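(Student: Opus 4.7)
The plan is to combine the coercivity estimate from Proposition~\ref{restrict0} on the subspace $\mathbb{Y}_a^0$ with the explicit kernel element $\varphi_0$ furnished by Lemma~\ref{rmk:varphi0}. The key structural fact to extract is that $\mathrm{Ker}(\mathscr{L}_0) \cap \mathbb{Y}_a$ is one-dimensional and spanned by $\varphi_0$; once this is in hand, the conclusion follows by a one-line bilinearity argument, and it will in fact give directly that $\varphi = 0$, which a fortiori yields $M_2(\varphi) = 0$.

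First, I would reduce to a zero second moment. Since Lemma~\ref{rmk:varphi0} provides $\varphi_0 \in \mathrm{Ker}(\mathscr{L}_0) \cap \mathbb{Y}_a$ with $M_2(\varphi_0) \neq 0$, I would set
\[
c := \frac{M_2(\varphi)}{M_2(\varphi_0)}, \qquad \tilde\varphi := \varphi - c\,\varphi_0.
\]
By construction $M_2(\tilde\varphi) = 0$, and since $\mathrm{Ker}(\mathscr{L}_0) \cap \mathbb{Y}_a$ is a linear subspace containing both $\varphi$ and $\varphi_0$, the function $\tilde\varphi$ lies in $\mathrm{Ker}(\mathscr{L}_0) \cap \mathbb{Y}_a^0$.

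Next, I would invoke the coercivity bound \eqref{eq:invert0} of Proposition~\ref{restrict0}: picking any admissible $\nu \in (0, 1 - \tfrac{a}{4} - 2^{1-a})$,
\[
\|\tilde\varphi\|_{L^1(\w_a)} \leq \frac{C(\nu)}{\nu}\,\|\mathscr{L}_0 \tilde\varphi\|_{L^1(\w_a)} = 0,
\]
so $\tilde\varphi = 0$ and hence $\varphi = c\,\varphi_0$. Inserting this identity into the hypothesis and using the linearity of $\mathscr{I}_0$ in its first argument yields
\[
0 = \mathscr{I}_0(\varphi, \bm{G}_0) = c\,\mathscr{I}_0(\varphi_0, \bm{G}_0).
\]
Lemma~\ref{rmk:varphi0} guarantees $\mathscr{I}_0(\varphi_0, \bm{G}_0) \neq 0$, forcing $c = 0$, so $\varphi = 0$ and in particular $M_2(\varphi) = 0$.

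The only mild obstacle is verifying that $\tilde\varphi$ lies in the domain $\mathscr{D}(\mathscr{L}_0) \cap \mathbb{Y}_a^0$ where the coercivity \eqref{eq:invert0} applies, but this is automatic: $\mathrm{Ker}(\mathscr{L}_0)$ is a linear subspace of $\mathscr{D}(\mathscr{L}_0)$ and $\mathbb{Y}_a^0$ is a linear subspace of $L^1(\w_a)$. The conceptual content of the argument is the coincidence that the specific calibration $\lambda_0 = \exp(\tfrac12 \mathscr{I}_0(\bm{H},\bm{H}))$ in the definition of $\bm{G}_0$ is exactly what makes $\mathscr{I}_0(\varphi_0, \bm{G}_0)$ nontrivial on the one-dimensional kernel, enabling the dichotomy above to collapse to $\varphi \equiv 0$.
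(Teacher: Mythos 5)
Your argument is correct and follows essentially the same route as the paper's proof: the same decomposition $\varphi-\frac{M_{2}(\varphi)}{M_{2}(\varphi_{0})}\varphi_{0}$, the same use of Proposition~\ref{restrict0} to kill the zero-energy part, and the same use of $\mathscr{I}_{0}(\varphi_{0},\bm{G}_{0})\neq 0$ from Lemma~\ref{rmk:varphi0} to force $M_{2}(\varphi)=0$. Your only (harmless) deviation is that you conclude $\varphi=c\,\varphi_{0}=0$ directly from $c=0$, whereas the paper invokes Proposition~\ref{restrict0} a second time.
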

\begin{proof} Let $\varphi \in \mathrm{Ker}(\mathscr{L}_{0}) \cap \mathbb{Y}_{a}$ be such that $\mathscr{I}_{0}(\varphi,\bm{G}_{0})=0$. Let 
$$\varphi^{\perp}=\varphi-\frac{M_{2}(\varphi)}{M_{2}(\varphi_{0})}\varphi_{0}.$$
One has of course $M_{2}(\varphi^{\perp})=0$ (i.e. $\varphi^{\perp} \in \mathbb{Y}_{a}^{0}$) and $\mathscr{L}_{0} (\varphi^{\perp})=0$ since both $\varphi$ and $\varphi_{0}$ belong to $\mathrm{Ker}(\mathscr{L} )$. According to Proposition \ref{restrict0}, one has $\varphi^{\perp}=0$. Therefore,
$$\varphi=\frac{M_{2}(\varphi)}{M_{2}(\varphi_{0})}\varphi_{0} \qquad \text{ and consequently } \quad \mathscr{I}_{0}(\varphi,\bm{G}_{0})=\frac{M_{2}(\varphi)}{M_{2}(\varphi_{0})}\mathscr{I}_{0}(\varphi_{0},\bm{G}_{0}).$$
Since, by assumption $\mathscr{I}_{0}(\varphi,\bm{G}_{0})=0$ while $\mathscr{I}_{0}(\varphi_{0},\bm{G}_{0}) \neq 0$, it must hold that $M_{2}(\varphi)=0.$ In particular, $\varphi \in \mathbb{Y}_{a}^{0}$ and, using Proposition \ref{restrict0} again, we deduce that $\varphi=0.$ 
\end{proof}

\appendix

\numberwithin{equation}{section}

\section{Properties of the Fourier norm}\label{appendix}

The following lemma is a consequence of \cite[Lemma 2.5]{MR2355628}.

\begin{lem}\label{lem:mukvertk}
 Let $2<k<3$. There exists a constant $C>0$ depending only on $k$ such that
  $$\vertiii{\widehat{\mu}}_{k}\leq C \int_{\R}  (1+|x|)^{k} \,|\mu|(\d x),$$
 for any $\mu\in X_{k}$. Similarly, for any $1\le p <\infty$ and $2<k<3$, there exists a constant $C>0$ depending only on $k$ and $p$ such that
  $$\vertiii{\widehat{\mu}}_{k,p}\leq C \int_{\R}  (1+|x|)^{k} \,|\mu|(\d x),$$
 for any $\mu\in X_{k}.$
 
\end{lem}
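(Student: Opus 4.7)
The plan is to exploit the fact that, for $\mu\in X_{k}$, the Fourier transform $\widehat{\mu}$ vanishes to order two at the origin and then quantify this vanishing by a purely elementary Taylor-type estimate. Concretely, since $\int d\mu=\int x\,d\mu=\int x^{2}\,d\mu=0$, we may write
\begin{equation*}
\widehat{\mu}(\xi)=\int_{\R}\left(e^{-i\xi x}-1+i\xi x+\frac{(\xi x)^{2}}{2}\right)\,d\mu(x)=:\int_{\R}R(\xi,x)\,d\mu(x).
\end{equation*}
The key elementary estimate I would establish is that, for every $k\in(2,3)$, there exists $C_{k}>0$ such that
\begin{equation*}
|R(\xi,x)|\leq C_{k}|\xi x|^{k} \qquad \forall (\xi,x)\in\R^{2}.
\end{equation*}
This is proved by splitting into two regimes: when $|\xi x|\leq 1$, the third-order Taylor remainder gives $|R(\xi,x)|\leq |\xi x|^{3}/6\leq |\xi x|^{k}/6$; when $|\xi x|>1$, a triangle inequality yields $|R(\xi,x)|\leq 2+|\xi x|+(\xi x)^{2}/2\leq C(\xi x)^{2}\leq C|\xi x|^{k}$, since $k>2$. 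Using $|\xi x|^{k}\leq |\xi|^{k}(1+|x|)^{k}$ and integrating against $|\mu|$ immediately gives the first claim
\begin{equation*}
|\widehat{\mu}(\xi)|\leq C_{k}|\xi|^{k}\int_{\R}(1+|x|)^{k}\,d|\mu|(x),
\end{equation*}
so that $\vertiii{\widehat{\mu}}_{k}\leq C_{k}\int(1+|x|)^{k}d|\mu|$.

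For the $\vertiii{\cdot}_{k,p}$-norm the pointwise bound above is integrable near zero but not at infinity, so I would combine it with the trivial estimate $|\widehat{\mu}(\xi)|\leq |\mu|(\R)\leq \int(1+|x|)^{k}d|\mu|$. Setting $M:=\int(1+|x|)^{k}d|\mu|$, I split
\begin{equation*}
\vertiii{\widehat{\mu}}_{k,p}^{p}=\int_{|\xi|\leq 1}\frac{|\widehat{\mu}(\xi)|^{p}}{|\xi|^{kp}}\,d\xi+\int_{|\xi|>1}\frac{|\widehat{\mu}(\xi)|^{p}}{|\xi|^{kp}}\,d\xi,
\end{equation*}
bounding the first integral by the first estimate, namely $\leq (C_{k}M)^{p}\int_{|\xi|\leq 1}d\xi$, and the second by the trivial estimate, $\leq M^{p}\int_{|\xi|>1}|\xi|^{-kp}d\xi$. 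The second integral is finite because $kp>2p\geq 2>1$ for $p\geq 1$, and combining the two gives $\vertiii{\widehat{\mu}}_{k,p}\leq C(k,p)M$.

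The only genuine technical point is the uniform Taylor-type bound $|R(\xi,x)|\leq C_{k}|\xi x|^{k}$; the rest is a splitting argument. I do not expect any real obstacle, since both ingredients are elementary — it is simply a matter of writing them down cleanly and organising the two regimes $|\xi x|\lessgtr 1$ (for the pointwise bound) and $|\xi|\lessgtr 1$ (for the $L^{p}$-in-Fourier statement).
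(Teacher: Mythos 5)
Your proof is correct. The second half (the $\vertiii{\cdot}_{k,p}$ bound) is exactly the paper's argument: split at $|\xi|=1$, use the pointwise bound $|\widehat{\mu}(\xi)|\leq C|\xi|^{k}M$ on $|\xi|\leq 1$ and the trivial bound $|\widehat{\mu}(\xi)|\leq|\mu|(\R)\leq M$ on $|\xi|>1$, where finiteness of $\int_{|\xi|>1}|\xi|^{-kp}\,\d\xi$ follows from $kp>1$. Where you genuinely differ is in how the pointwise bound is obtained. The paper applies Taylor's formula to $\widehat{\mu}$ itself, writing $|\widehat{\mu}(\xi)|\leq|\xi|^{2}\int_{0}^{1}|\widehat{\mu}''(t\xi)|\,\d t$, and then controls $|\widehat{\mu}''(t\xi)|\leq 2M|t\xi|^{k-2}$ by invoking \cite[Lemma 2.5]{MR2355628}, i.e.\ it leans on the existing Fourier-metric machinery (which is stated there for general concave weights $\phi$, not just powers). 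You instead subtract the second-order Taylor polynomial of the kernel $e^{-i\xi x}$ under the integral — legitimate, since the moment conditions defining $X_{k}$ kill the added polynomial terms and all terms are $|\mu|$-integrable for $k>2$ — and prove the elementary two-regime estimate $\bigl|e^{-iu}-1+iu+\tfrac{u^{2}}{2}\bigr|\leq C_{k}|u|^{k}$ ($|u|^{3}/6$ for $|u|\leq1$, a crude quadratic bound for $|u|\geq1$), then use $|\xi x|^{k}\leq|\xi|^{k}(1+|x|)^{k}$. This makes your argument fully self-contained and gives an explicit constant, at the (small) cost of not reusing the more general weighted statement from the literature; both routes yield the same conclusion with the same dependence $C=C(k)$ resp.\ $C(k,p)$.
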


\begin{proof}
  Since $\mu \in X_{k}$, we have $\widehat{\mu}(0)=0$, $\widehat{\mu}'(0)=0$ and $\widehat{\mu}''(0)=0$. Hence, Taylor formula implies that
  $$|\widehat{\mu}(\xi)|\le |\xi|^2 \int_0^1 | \widehat{\mu}''(t\xi)| \d t .$$
We set $s=k-2\in(0,1)$. Then, for ${\phi}(r)=r^s$, we have
  $$M:=\int_\R (1+x^2)\phi(|x|)|\mu|(\d x)<\infty.$$
  Moreover, $\phi$ is a strictly increasing function with $\frac{\phi(r)}{r}$ nonincreasing. It follows from \cite[Lemma 2.5]{MR2355628} that
  $$|\widehat{\mu}''(t\xi)|\le 2 M \psi(|t\xi|),$$
  where $\psi(y)=[\phi(y^{-1})]^{-1}=y^s$. Hence,
   \begin{equation}\label{bound}
|\widehat{\mu}(\xi)|\le  2M\, |\xi|^{2+s}   \int_0^1t^s\d t \le  \frac{2 M}{s+1} |\xi|^{2+s}. 
\end{equation}
This proves the first part of the result {since $s+2=k$ and $M\le\ds\int_{\R} (1+|x|)^k \,|\mu|(\dx)$.} Now, for the second part, given $1 \leq p < \infty$, we have 
$$\vertiii{\widehat{\mu}}_{k,p}^p = \int_{|\xi|\le 1} \frac{|\widehat{\mu}(\xi)|^p}{|\xi|^{kp}} \; \d\xi + \int_{|\xi|> 1} \frac{|\widehat{\mu}(\xi)|^p}{|\xi|^{kp}} \; \d\xi.   $$
Next, for $|\xi|> 1$, we simply use the bound $|\widehat{\mu}(\xi)| \le \ds\int_{\R} |\mu|(\d x)$  whereas, for $|\xi|\le 1$, we use the bound \eqref{bound}. This leads to 
  $$\vertiii{\widehat{\mu}}_{k,p}^p  \le \frac{2^{p+1}}{(k-1)^p}  \left(\int_{\R}  (1+|x|)^{k} \,|\mu|(\d x)\right)^p + \left(\int_{\R} |\mu|(\d x)\right)^p \int_{|\xi|>1} \frac{\d \xi}{|\xi|^{pk}}.$$
The result then follows since $\ds \int_{|\xi|>1} \frac{\d \xi}{|\xi|^{pk}}<\infty$. 
\end{proof}}

\bibliographystyle{plainnat-linked}

\end{document}